\definecolor{darkgreen}{rgb}{0, 0.5, 0}
\definecolor{lightgreen}{rgb}{0.7, 1, 0.7}
\numberwithin{equation}{section}
\newtheorem{thm}{Theorem}[section]
\newtheorem{cor}[thm]{Corollary}
\newtheorem{lem}[thm]{Lemma}
\newtheorem{prop}[thm]{Proposition}
\theoremstyle{definition}
\newtheorem{example}[thm]{Example}
\newtheorem{defn}[thm]{Definition}
\newtheorem{rem}[thm]{Remark}
\newcommand{\bD}{\mathbb{D}}
\newcommand{\bC}{\mathbb{C}}
\newcommand{\bR}{\mathbb{R}}
\newcommand{\bN}{\mathbb{N}}
\newcommand{\bNz}{\mathbb{N}_0}
\newcommand{\bT}{\mathbb{T}}
\newcommand{\bZ}{\mathbb{Z}}
\newcommand{\cA}{\mathcal{A}}
\newcommand{\cB}{\mathcal{B}}
\newcommand{\cC}{\mathcal{C}}
\newcommand{\cD}{\mathcal{D}}
\newcommand{\cE}{\mathcal{E}}
\newcommand{\cF}{\mathcal{F}}
\newcommand{\cH}{\mathcal{H}}
\newcommand{\cK}{\mathcal{K}}
\newcommand{\cL}{\mathcal{L}}
\newcommand{\cU}{\mathcal{U}}
\newcommand{\cZ}{\mathcal{Z}}
\newcommand{\al}{\alpha}
\newcommand{\be}{\beta}
\newcommand{\ga}{\gamma}
\newcommand{\la}{\lambda}
\newcommand{\La}{\Lambda}
\newcommand{\Tht}{\Theta}
\newcommand{\Om}{\Omega}
\newcommand{\hG}{\widehat{G}}
\newcommand{\hH}{\widehat{H}}
\newcommand{\hnu}{\widehat{\nu}}
\newcommand{\FourierH}{\mathbf{F}}
\newcommand{\tU}{\widetilde{U}}
\renewcommand{\phi}{\varphi}
\newcommand{\enumber}{\operatorname{e}}
\newcommand{\iu}{\operatorname{i}}
\newcommand{\linspan}{\operatorname{span}}
\renewcommand{\Im}{\operatorname{Im}}
\newcommand{\conj}[1]{\overline{#1}}
\newcommand{\charfun}{\mathbf{1}}
\newcommand{\tS}{\widetilde{S}}
\newcommand{\ttS}{\widetilde{\widetilde{S}}}
\newcommand{\myurl}[1]{\href{#1}{#1}}
\newcommand{\eqdef}{\coloneqq}
\title{Integral representation
of translation-invariant operators\\
on reproducing kernel Hilbert spaces}
\author{Shubham R. Bais, Egor A. Maximenko,
D. Venku Naidu}
\begin{document}
\maketitle

\begin{center}
Dedicated to the memory of
Prof. Nikolai Vasilevski (1948--2024)
\end{center}

\begin{abstract}
We suppose that $G$ is a locally compact abelian group,
$Y$ is a measure space, and $H$ is a reproducing kernel Hilbert space on $G\times Y$ such that $H$ is naturally embedded into $L^2(G\times Y)$ and it is invariant under the translations associated with $G$.
We consider the von Neumann algebra of all bounded linear operators acting on $H$ that commute with these translations.
Assuming that this algebra is commutative,
we represent its elements as integral operators and characterize the corresponding integral kernels.
Furthermore, we give W*-algebra structure on the functions associated with the integral kernels.
We apply this general scheme to a series of examples,
including rotation- or translation-invariant operators
in Bergman or Fock spaces.

\medskip\noindent
\textbf{MSC:}
47B32,
22D25,
43A25,
47N20,
46L10.


\medskip\noindent
\textbf{Keywords:}
reproducing kernel Hilbert space,
von Neumann algebra,
W*-algebra,
operator algebra,
Fourier transform,
integral operator.
\end{abstract}

\bigskip
\tableofcontents

\clearpage
\section{Introduction}

Given a reproducing kernel Hilbert space (in short, RKHS) $H$ over a domain $X$,
the bounded linear operators on $H$ can be written via the inner product
(see Aronszajn~\cite[Part~I, Section~11]{Aronszajn1950}).
If $\mu$ is a measure on $X$
and the inner product in $H$
is inherited from $L^2(X,\mu)$,
then the bounded linear operators on $H$
can be written as integral operators
(see Section~\ref{sec:integral_form_of_bounded_linear_operators} below).
Recently, Bais and Venku Naidu~\cite{BaisNaidu2023Fock,
BaisNaidu2023Toeplitz, BaisNaidu2024Bergman},
Bais, Venku Naidu, and Pinlodi~\cite{BaisNaiduPinlodi2023Bergman}, Thagavelu \cite{Thangavelu2024FockSobolev} applied this idea to several examples of RKHSs of analytic functions and several classes of operators invariant under some group actions on the domains.
In this paper, we generalize these ideas to a more abstract setting proposed by Herrera-Ya\~{n}ez, Maximenko, and Ramos-V\'{a}zquez~\cite{HerreraMaximenkoRamos2022}.
Namely, we suppose that $X$ is of the form $G\times Y$,
where $G$ is a locally compact abelian group
with Haar measure $\nu$
and $Y$ is a measure space with measure $\la$.
We consider the unitary representation $\rho$ of $G$ in the space $H$,
consisting of the ``horizontal translations'':
\[
(\rho(a)f)(u,v) \eqdef f(u-a,v)\qquad(a\in G,\ f\in H,\ u\in G,\ v\in Y),
\]
and we denote by $\cC(\rho)$ its centralizer.
We assume some additional conditions;
in particular,
they guarantee the commutativity of the von Neumann algebra $\cC(\rho)$.
Using the Fourier transform of the reproducing kernel ``in the horizontal direction'',
the authors of~\cite{HerreraMaximenkoRamos2022}
constructed a measure space $\Om$
and an isometric isomorphism $R\colon H\to L^2(\Om)$,
and represented all operators belonging to $\cC(\rho)$
in the form $V_b=R^\ast M_b R$,
where $b\in L^\infty(\Om)$
and $M_b$ is the multiplication operator.
In this paper, we represent the elements of $\cC(\rho)$
as integral operators of the following form:
\[
(S_\psi f)(x,y)
=
\int_{G\times Y} \psi(x-u,y,v)\,f(u,v)\,
\dif\nu(u)\,\dif\la(v).
\]
We denote by $\cA$ the set of all functions $\psi\colon G\times Y\times Y\to\bC$
from this integral representation,
and describe bijective correspondences between $b$ in $L^\infty(\Om)$, $\psi$ in $\cA$, and $S_\psi=V_b$ in $\cC(\rho)$.

Furthermore, using these bijections,
we provide $\cA$ with the structure of W*-algebra.
In other words, $\cA$ is a C*-algebra isometrically isomorphic to the von Neumann algebra $\cC(\rho)$.
The ``multiplication'' on $\cA$ is a mixture of the ``box operation''
(the ``composition'' of the integral kernels of integral operators)
and the convolution;
it can also be referred to as a ``partial convolution''.

Thereby, we generalize the following series of pioneering papers where non-trivial C*-algebras of analytic functions were constructed on various subdomains of $\bC$.
Ma and Zhu~\cite{Ma-Zhu_PAMS_2024} gave a concrete non-trivial example of a C*-algebra (in fact, a maximal abelian W*-algebra) of entire functions on the complex plane.  These functions are closely related to the kernels of the integral operators studied by Cao et al. \cite{Cao_SingInt_AdvMath_2020} and Zhu \cite{Zhu_SingInt_IEOT_2015}.  Additionally, the authors of \cite{Ma-Zhu_PAMS_2024} posed an open problem regarding the construction of non-trivial C*-algebras of analytic functions on various subdomains of $\bC$,
especially over the open unit disk $\bD$.
In this direction, Bais~\cite{Bais_PAMS_2024}
constructed two non-trivial class of C*-algebras of analytic functions on the domains $\Pi=\{z\in \bC\colon \Im(z)>0\}$ and $\bC\setminus\{0\}$, respectively.  The C*-algebra over the upper half-plane constructed in \cite{Bais_PAMS_2024} can be translated to get a non-trivial C*-algebra of analytic functions over the open unit disk $\bD$, which answers the open problem posed in
\cite{Ma-Zhu_PAMS_2024}.
We also refer to the works of Mohan and Venku Naidu~\cite{Mohan-Venku_2024} which provide another example of a commutative C*-algebra of analytic functions over $\bD$.  

Recently, Thangavelu~\cite{Thangavelu2024TwistedHeisenberg} constructed a noncommutative algebra of entire functions on $\bC^{2n}$,
which is related to the integral operators studied by Garg and Thangavelu~\cite{GargThangavelu2024TwistedFock} on the twisted Fock space.  

In a series of independent papers,
Karapetyants and Samko~\cite{Karapetyants_2020_convolution_disk},
Karapetyants and Morales~\cite{Karapetyants_2022_convolution_disk},
and Karapetyants and Vagarshakyan~\cite{Karapetyants_2024_convolution_UHP}
also studied similar classes of operators 
and integral kernels in the Bergman spaces over the unit disk
and the upper half-plane,
but they did not pay attention to the C*-algebra structure.

In this article, we give a way to construct commutative C*-algebras (in fact, W*-algebras) of functions on domains of the form $G\times Y\times Y$, where $G$ is a locally compact abelian group and $Y$ is a sigma-finite measure space.
As a consequence, C*-algebras of analytic functions constructed in \cite{Bais_PAMS_2024, Zhu_SingInt_IEOT_2015, Mohan-Venku_2024} can be recovered. 


The rest of the paper has the following structure.
In Section~\ref{sec:integral_form_of_bounded_linear_operators}, we recall the elementary idea that all bounded linear operators acting in $H$ can be written as integral operators.
In Section~\ref{sec:translation_invariant},
we recall the necessary information from \cite{HerreraMaximenkoRamos2022}.
In Section~\ref{sec:integral_representation},
we represent the translation-invariant operators in the integral form and prove the first main result of this paper (Theorem~\ref{thm:V_integral_representation}).
In Section~\ref{sec:cA},
we study the set $\cA$ of all functions $\psi$ from the integral representation above,
and study the bijections between $\cA$, $L^\infty(\Om)$, and $\cC(\rho)$.
In Section~\ref{sec:integral_representation_special_operators},
we give an integral form for some special subclasses of operators belonging to $\cC(\rho)$:
for the operators $\rho(a)$
and for the Toeplitz operators with defining symbols belonging to $L^\infty(G\times Y)$ and invariant under translations by elements of $G$.
In Section \ref{sec:algebra_of_functions},
we define new multiplication operation, adjoint operation and norm on $\cA$, with respect to which it forms a C*-algebra.
At last, in Section~\ref{sec:examples},
we consider a series of examples of operator algebras corresponding to special choices of $G$, $Y$, and $H$, and provide integral representations for the operators belonging to $\cC(\rho)$.
We also consider some examples that can be reduced to our scheme by weighted changes of variables.

\section{Integral form of bounded linear operators in RKHS}
\label{sec:integral_form_of_bounded_linear_operators}

In this section,
we prove that if an RKHS
$H$ is naturally embedded in some $L^2$-space,
then every bounded linear operator acting in $H$
can be represented as an integral operator
(Proposition~\ref{prop:bounded_operators_as_integral_operators}).
Moreover, under an additional assumption
on the integral kernel,
this representation is unique
(Proposition~\ref{prop:integral_operator_uniqueness}).
These facts are corollaries of
Aronszajn~\cite[Part~I, Section~11]{Aronszajn1950}.
For the sake of completeness, we give detailed proofs.

In the particular case if $H$ is the Fock space,
Folland~\cite[Proposition (1.68)]{Folland1989} represented the bounded linear operators on $H$ as integral operators,
but we have not found a general version of this idea.

In this section,
we suppose that $X$ is a nonempty set,
$\mu$ is a measure on $X$,
$H$ is an RKHS over $X$,
such that its inner product is inherited from $L^2(X,\mu)$:
\[
\langle f,g\rangle
= \int_X f\,\conj{g}\,\dif\mu.
\]
We briefly write $L^2(X)$ instead of $L^2(X,\mu)$.
We denote by $(K_w)_{w\in X}$ the reproducing kernel of $H$.

Let $\cB(H)$ be the algebra of all bounded linear operators on $H$, with the usual operator norm:
\begin{equation}
\label{eq:operator_norm}
\|T\| \eqdef \sup_{\substack{f\in H\\\|f\|=1}}\|Tf\|.
\end{equation}

\begin{rem}[dot notation for functions with fixed arguments]
\label{rem:dot_notation_for_functions_with_fixed_arguments}
Given a function $L\colon X^2\to\bC$ and points $a,b$ in $X$, 
we denote by $L(a,\cdot)$ and $L(\cdot,b)$ the functions obtained from $L$ by fixing one of its arguments;
that is, $L(a,\cdot)\colon X\to\bC$,
$L(\cdot,b)\colon X\to\bC$,
\[
L(a,\cdot)(w)\eqdef L(a,w)\quad(w\in X),\qquad
L(\cdot,b)(z)\eqdef L(z,b)\quad(z\in X).
\]
We also extend this notation for functions of three and four arguments.
\end{rem}

\begin{prop}
\label{prop:bounded_operators_as_integral_operators}
Let $T\in\cB(H)$.
Define $K_T\colon X^2\to\bC$ by
\begin{equation}
\label{eq:integral_kernel_associated_to_operator_in_RKHS}
K_T(z,w)
\eqdef (T K_w)(z)
\qquad(w,z\in X).
\end{equation}
Then, for every $f$ in $H$
and every $z$ in $X$,
\begin{equation}
\label{eq:integral_representation_of_bounded_operator}
(T f)(z)
=
\int_X K_T(z,w) f(w) \dif\mu(w).
\end{equation}
Moreover,
the integral kernel $K_T$ has the following properties:
\begin{align}
\label{eq:KT1_in_H}
K_T(\cdot, w) \in H\qquad(w\in X),
\\
\label{eq:KT2_in_H}
\conj{K_T(z, \cdot)} \in H\qquad(z\in X).
\end{align}
\end{prop}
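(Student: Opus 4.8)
The plan is to exploit the reproducing property of the kernel together with the continuity of $T$ and the fact that $H$ is embedded in $L^2(X)$. The reproducing property states that for every $f \in H$ and every $w \in X$, we have $f(w) = \langle f, K_w\rangle$. The core identity to establish is equation~\eqref{eq:integral_representation_of_bounded_operator}, so first I would evaluate $(Tf)(z)$ using the adjoint $T^\ast$. Specifically, since point evaluation $f \mapsto (Tf)(z)$ is a bounded linear functional on $H$ (it is the composition of the bounded operator $T$ with the bounded evaluation functional $f\mapsto f(z) = \langle f, K_z\rangle$), I can write $(Tf)(z) = \langle Tf, K_z\rangle = \langle f, T^\ast K_z\rangle$. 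The goal is then to turn this inner product into the stated integral.

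Next I would rewrite the $L^2$ inner product explicitly as an integral over $X$ against the measure $\mu$, using the hypothesis that the inner product of $H$ is inherited from $L^2(X,\mu)$. This gives
\[
(Tf)(z)
= \langle f, T^\ast K_z\rangle
= \int_X f(w)\,\conj{(T^\ast K_z)(w)}\,\dif\mu(w).
\]
It remains to identify $\conj{(T^\ast K_z)(w)}$ with $K_T(z,w) = (TK_w)(z)$. To see this, I would compute $(T^\ast K_z)(w)$ by the reproducing property once more: $(T^\ast K_z)(w) = \langle T^\ast K_z, K_w\rangle = \langle K_z, T K_w\rangle = \conj{\langle T K_w, K_z\rangle} = \conj{(TK_w)(z)} = \conj{K_T(z,w)}$. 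Taking complex conjugates yields $\conj{(T^\ast K_z)(w)} = K_T(z,w)$, and substituting into the integral gives exactly~\eqref{eq:integral_representation_of_bounded_operator}. This establishes the integral representation.

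For the two membership properties, \eqref{eq:KT1_in_H} is immediate from the definition: $K_T(\cdot,w) = T K_w$, and since $K_w \in H$ and $T$ maps $H$ into $H$, the function $K_T(\cdot,w)$ lies in $H$. For~\eqref{eq:KT2_in_H}, I would use the computation already carried out above, namely $K_T(z,w) = \conj{(T^\ast K_z)(w)}$, which rearranges to $\conj{K_T(z,\cdot)} = T^\ast K_z$. Since $K_z \in H$ and $T^\ast$ is also a bounded operator on $H$, the function $T^\ast K_z$ belongs to $H$, which is precisely~\eqref{eq:KT2_in_H}.

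I do not anticipate a serious obstacle here, as the argument is a clean unwinding of the reproducing property and the definition of the adjoint; the whole proof is essentially a sequence of inner-product manipulations. The one point requiring mild care is the justification that point evaluation $(Tf)(z)$ can legitimately be written as $\langle Tf, K_z\rangle$ — this relies on $Tf \in H$ (so that evaluation makes sense and the reproducing identity applies), which holds because $T \in \cB(H)$. A secondary subtlety worth flagging explicitly is the interplay between conjugation and the ordering of arguments in the inner product convention $\langle f,g\rangle = \int_X f\,\conj{g}\,\dif\mu$; keeping track of which factor is conjugated is what makes the identification $\conj{(T^\ast K_z)(w)} = K_T(z,w)$ come out correctly rather than with a spurious conjugate.
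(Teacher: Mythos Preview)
Your proposal is correct and follows essentially the same argument as the paper: write $(Tf)(z)=\langle f,T^\ast K_z\rangle$, express the inner product as an integral, identify $\conj{(T^\ast K_z)(w)}=(TK_w)(z)=K_T(z,w)$ via the reproducing property, and read off $K_T(\cdot,w)=TK_w\in H$ and $\conj{K_T(z,\cdot)}=T^\ast K_z\in H$.
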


\begin{proof}
Since $H$ is an RKHS,
$T$ can be written via the inner product:
\begin{equation}
\label{eq:bounded_operator_via_inner_product}
(Tf)(z)
= \langle Tf,K_z\rangle
= \langle f, T^\ast K_z \rangle.
\end{equation}
Furthermore,
we apply the assumption
that the inner product in $H$
can be written in the integral form:
\[
(Tf)(z)
=
\langle f, T^\ast K_z \rangle
=
\int_X f(w) \conj{(T^\ast K_z)(w)}\,\dif\mu(w).
\]
Notice that
\[
\conj{(T^\ast K_z)(w)}
= \conj{\langle T^\ast K_z, K_w\rangle}
= \conj{\langle K_z, TK_w\rangle}
= \langle TK_w, K_z \rangle
= (TK_w)(z).
\]
Defining $K_T$ by~\eqref{eq:integral_kernel_associated_to_operator_in_RKHS},
we get~\eqref{eq:integral_representation_of_bounded_operator}.
Moreover,
\begin{equation}
\label{eq:conj_KT_zw}
\conj{K_T(z,w)}
= \conj{(T K_w)(z)}
= (T^\ast K_z)(w).
\end{equation}
Identities~\eqref{eq:integral_kernel_associated_to_operator_in_RKHS} and~\eqref{eq:conj_KT_zw} can be rewritten in the following short form:
\begin{equation}
\label{eq:KT1_and_conjKT2_as_images}
K_T(\cdot, w) = T K_w,
\qquad
\conj{K_T(z, \cdot)}
= T^\ast K_z.
\end{equation}
Since $K_z,K_w$ belong to $H$
and $T,T^\ast$ act from $H$ to $H$,
we get
\eqref{eq:KT1_in_H} and~\eqref{eq:KT2_in_H}.
\end{proof}

\begin{prop}
\label{prop:integral_operator_uniqueness}
Let $T$ and $K_T$ be as in Proposition~\ref{prop:bounded_operators_as_integral_operators}.
Suppose that
$M\colon X^2\to\bC$ is a function such that
\begin{equation}
\label{eq:M_second_condition}
\conj{M(z, \cdot)}\in H\qquad(z\in X)
\end{equation}
and
\begin{equation}
\label{eq:T_via_integral_with_another_function_M}
(T f)(z)
=
\int_X M(z, w) f(w)\,\dif\mu(w)
\qquad(f\in H,\ z\in X).
\end{equation}
Then, $K_T = M$.
\end{prop}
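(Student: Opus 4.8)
The plan is to show that the two integral representations of $Tf$ — the one furnished by Proposition~\ref{prop:bounded_operators_as_integral_operators} via $K_T$ and the hypothesized one~\eqref{eq:T_via_integral_with_another_function_M} via $M$ — force the kernels to coincide. The mechanism is to read the difference of the kernels as a single function of the second variable, conjugate it into $H$, and use that an element of $H$ orthogonal to all of $H$ must vanish.

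First I would fix $z\in X$ and subtract~\eqref{eq:integral_representation_of_bounded_operator} from~\eqref{eq:T_via_integral_with_another_function_M}. For every $f\in H$ this gives
\[
\int_X \bigl(K_T(z,w)-M(z,w)\bigr)\,f(w)\,\dif\mu(w)=0.
\]
Writing $h_z\eqdef\conj{K_T(z,\cdot)}-\conj{M(z,\cdot)}$ and recalling that $\int_X g(w)f(w)\,\dif\mu(w)=\langle f,\conj{g}\rangle$, the displayed identity is exactly $\langle f,h_z\rangle=0$ for all $f\in H$. The next step is to observe that $h_z$ actually belongs to $H$: the term $\conj{K_T(z,\cdot)}$ lies in $H$ by property~\eqref{eq:KT2_in_H}, and $\conj{M(z,\cdot)}$ lies in $H$ by the standing hypothesis~\eqref{eq:M_second_condition}. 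Hence $h_z$ is an element of $H$ orthogonal to every element of $H$; taking $f=h_z$ yields $\|h_z\|^2=0$, so $h_z=0$ as an element of $H$. Because $H$ is an RKHS its point evaluations are bounded, so the zero of $H$ is the everywhere-vanishing function; therefore $h_z(w)=0$ for all $w$, that is $K_T(z,w)=M(z,w)$ for every $w$. Since $z$ was arbitrary, $K_T=M$.

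The main subtlety — rather than a genuine obstacle — is that the orthogonality argument must be carried out \emph{inside} $H$, not in the ambient $L^2(X,\mu)$. In $L^2$ a function orthogonal to the (possibly proper) subspace $H$ need not be zero, so it is essential that $h_z$ itself lies in $H$; this is precisely what the two conjugation conditions~\eqref{eq:KT2_in_H} and~\eqref{eq:M_second_condition} guarantee, and it explains why hypothesis~\eqref{eq:M_second_condition} is included in the statement. The closing passage from ``$h_z=0$ in $H$'' to ``$h_z$ vanishes pointwise'' relies only on the reproducing kernel structure of $H$, and everything else is a routine rewriting of integrals as inner products.
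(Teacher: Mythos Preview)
Your argument is correct. The paper's own proof is a minor variant: instead of subtracting the two representations and arguing by orthogonality, it plugs $f=K_a$ directly into~\eqref{eq:T_via_integral_with_another_function_M} and uses the reproducing property of $\conj{M(z,\cdot)}\in H$ to read off
\[
M(z,a)=\conj{\langle \conj{M(z,\cdot)},K_a\rangle}=\langle K_a,\conj{M(z,\cdot)}\rangle=\int_X M(z,w)K_a(w)\,\dif\mu(w)=(TK_a)(z)=K_T(z,a).
\]
The difference is slight but worth noting: the paper's route never invokes~\eqref{eq:KT2_in_H} for $K_T$, only the hypothesis~\eqref{eq:M_second_condition} and the definition of $K_T$, whereas your orthogonality argument needs both conjugation conditions to place $h_z$ in $H$. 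Your version has the pedagogical merit of making explicit why~\eqref{eq:M_second_condition} cannot be dropped (the ``$h_z\in H$'' step fails without it), which is exactly the point of Example~\ref{example:M_without_second_condition}.
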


\begin{proof}
Let $z,a\in X$.
We apply the reproducing property for
$\conj{M(z, \cdot)}$
and then~\eqref{eq:T_via_integral_with_another_function_M} for $f=K_a$:
\begin{align*}
M(z,a)
&=
\conj{\conj{M(z,a)}}
=
\conj{\langle \conj{M(z,\cdot)}, K_a\rangle}
=
\langle K_a, \conj{M(z,\cdot)}\rangle
\\
&=
\int_X M(z,w) K_a(w)\,\dif\mu(w)
=
(T K_a)(z)
=
K_T(z, a).
\end{align*}
Thus, $M=K_T$.
\end{proof}

\begin{cor}
\label{cor:uniqueness_equivalent_condition}
Let $T$ and $K_T$ be as in Proposition~\ref{prop:bounded_operators_as_integral_operators},
and $M\colon X^2\to\bC$ satisfies~\eqref{eq:T_via_integral_with_another_function_M}.
Then, \eqref{eq:M_second_condition} is equivalent to $K_T=M$.
\end{cor}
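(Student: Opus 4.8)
The plan is to prove the stated equivalence by splitting it into its two implications, each of which reduces to a result already established above. Throughout, the integral representation~\eqref{eq:T_via_integral_with_another_function_M} is a standing hypothesis, so the comparison is genuinely between the single extra condition~\eqref{eq:M_second_condition} and the conclusion $K_T = M$.

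First I would treat the direction ``\eqref{eq:M_second_condition} implies $K_T = M$''. Here the hypotheses are exactly those of Proposition~\ref{prop:integral_operator_uniqueness}: we are given that $M$ satisfies the integral representation~\eqref{eq:T_via_integral_with_another_function_M} together with the membership condition~\eqref{eq:M_second_condition}. Hence this implication is nothing but a restatement of Proposition~\ref{prop:integral_operator_uniqueness}, and I would simply invoke it to conclude $K_T = M$.

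For the converse, ``$K_T = M$ implies~\eqref{eq:M_second_condition}'', I would substitute $M = K_T$ into the expression appearing in~\eqref{eq:M_second_condition}, obtaining $\conj{M(z,\cdot)} = \conj{K_T(z,\cdot)}$ for each $z\in X$. Property~\eqref{eq:KT2_in_H} of Proposition~\ref{prop:bounded_operators_as_integral_operators} asserts precisely that $\conj{K_T(z,\cdot)}\in H$, so~\eqref{eq:M_second_condition} follows at once.

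There is no real obstacle here: the corollary is a repackaging of Proposition~\ref{prop:integral_operator_uniqueness} (for one implication) and the kernel property~\eqref{eq:KT2_in_H} (for the other). The only point to keep in mind is that~\eqref{eq:T_via_integral_with_another_function_M} is assumed in both directions, which is what makes the two conditions comparable under a common standing hypothesis.
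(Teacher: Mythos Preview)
Your proof is correct and follows exactly the paper's approach: one implication is Proposition~\ref{prop:integral_operator_uniqueness}, and the other is property~\eqref{eq:KT2_in_H}.
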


\begin{proof}
It follows from property~\eqref{eq:KT2_in_H}
and Proposition~\ref{prop:integral_operator_uniqueness}.
\end{proof}

\begin{example}
\label{example:M_without_second_condition}
What can happen if $M$ does not satisfy~\eqref{eq:M_second_condition}?
Let us consider an example.
We denote the semi-Hilbert space of all square-integrable functions with respect to $\mu$ by $\cL^2(X)$
and the set of all measurable functions that are zero almost everywhere by $\cZ(X)$.
With this notation,
$L^2(X)$ is the quotient space $\cL^2(X)/\cZ(X)$.
We have to work with $\cL^2(X)$ instead of $L^2(X)$
because $M$ is a function, not a class of equivalent functions.
We assume that $\{f+\cZ(X)\colon\ f\in H\}$ does not coincide with $L^2(X)$.
Let $h$ belong to $H\setminus\{0\}$
and let $g$ be a nontrivial element of the orthogonal complement of $H$.
The latter phrase means that $g\in\cL^2(X)$,
$g\notin\cZ(X)$, and for every $f$ in $H$,
\[
\int_X f\,\conj{g}\,\dif\mu=0.
\]
We define $M\colon X^2\to\bC$ by
\[
M(z,w) \eqdef h(z)\,\conj{g(w)}\qquad(z,w\in X)
\]
and consider the integral operator $T$ defined by~\eqref{eq:T_via_integral_with_another_function_M}.
The assumption $g\in\cL^2(X)$ assures that the integral in the right-hand side of~\eqref{prop:integral_operator_uniqueness} exists in the Lebesgue sense.
Furthermore, for every $f$ in $H$ and every $z$ in $X$,
\[
(Tf)(z)
=\int_X h(z)\,\conj{g(w)}\,f(w)\,\dif\mu(w)
=h(z) \int_X f\,\conj{g}\,\dif\mu
=0.
\]
Thus, $T$ is the zero operator and $K_T=0$,
although $M$ is nonzero.
We conclude that condition~\eqref{eq:M_second_condition}
is not necessary to define a bounded linear integral operator acting in $H$,
but it is crucial to recover $M$ from $T$ by $M=K_T$.
In this example, $M$ satisfies
\begin{equation}
\label{eq:M_first_condition}
M(\cdot,w)\in H\qquad(w\in X),
\end{equation}
but this property does not help to obtain $M=K_T$.
\end{example}

What conditions on $M$ are necessary and sufficient to define a bounded linear operator $T$ using~\eqref{eq:T_via_integral_with_another_function_M}?
As we know, this is an open question, even in the case of Toeplitz operators, where $M(z,w)=b(w)K_w(z)$
with some measurable (possible unbounded) function $b$.

\section{Diagonalization of translation-invariant operators}
\label{sec:translation_invariant} 

In this section, we review some constructions and facts from~\cite{HerreraMaximenkoRamos2022}.
We freely use basic facts
about the Fourier transform and convolutions
over locally compact abelian groups;
see, e.g., \cite[Chapters~I--IV]{Folland2016}.

Let $G$ be a locally compact abelian group with a Haar measure $\nu$
and $(Y,\la)$ be a measure space.
We assume that $G$ is metrizable and $\sigma$-compact,
$Y$ is $\sigma$-finite,
and  $L^2(G,\nu)$ and $L^2(Y,\la)$ are separable.
Let $H$ be an RKHS on $G\times Y$ such that $H$ is naturally embedded into $L^2(G\times Y)$, i.e., the inner product in $H$ is given by
\[
\langle f,g\rangle
=\int_{G\times Y}f\,\overline{g}\,
\dif\,(\nu\otimes\la).
\]
Let $(K_{x,y})_{(x,y)\in G\times Y}$ be the reproducing kernel of $H$.
We suppose that $H$ is invariant under the horizontal translations.
Equivalently, $K$ satisfies
\begin{equation}
\label{eq:K_is_invariant}
K_{x,y}(u,v)=K_{0,y}(u-x,v)\qquad(u,x\in G,\ v,y\in Y).
\end{equation}
We require a technical assumption on $K$ which helps to justify the interchange of some integrals:
\begin{equation}
\label{eq:K_tecnhical_assumption}
\forall y\in Y\qquad
\sup_{v\in Y}
\int_G |K_{0,y}(u,v)|\,\dif\nu(u)
<+\infty.
\end{equation}
We denote by $\cU(H)$ the group of all unitary operators acting in $H$.
Define $\rho\colon G\to\cU(H)$ by
\begin{align}\label{eq:rho_H(a)}
(\rho(a)f)(x,y)\eqdef f(x-a,y)\qquad(a,x\in G,\ y\in Y,\ f\in H).
\end{align}
It is easy to see that $\rho$ is a unitary representation of $G$ on $H$.
Our main object of study is the centralizer of $\rho$ which we denote by $\cC(\rho)$:
\begin{equation}
\label{eq:centralizer_rho_H}
\cC(\rho)
\eqdef
\bigl\{
S\in\cB(H)\colon\quad
\forall a\in G\quad
S\rho(a)=\rho(a)S
\bigr\}.
\end{equation}
It is well known
(see, e.g., Folland~\cite[Section 3.1, page~75]{Folland2016}) that the centralizer (commutant) of a unitary representation is a von Neumann algebra.
Indeed, it is the commutant of the self-adjoint subset $\{\rho(a)\colon\ a\in G\}$ of $\cB(H)$.

Following the convention from Remark~\ref{rem:dot_notation_for_functions_with_fixed_arguments},
we use the points $\cdot$ or $\bullet$
to indicate the ``active variables'',
when some arguments of a function are fixed.
For example, $L_{\bullet,y}(v)$ below is the function $\hG\to\bC$
acting by
$\xi\mapsto L_{\xi,y}(v)$,
and $K_{0,y}(\cdot,v)$ is the function $G\to\bC$
acting by $u\mapsto K_{0,y}(u,v)$.

Let $\hG$ be the dual group of $G$.
We denote by $\hnu$ the Haar measure on $\hG$ such that the Fourier--Plancherel transform
$F\colon L^2(G,\nu)\to L^2(\hG,\hnu)$ is isometric.
We denote by $L_{\bullet,y}(v)$ the Fourier transform of $K_{0,y}(\cdot,v)$:
\begin{equation}
\label{eq:L_def}
L_{\xi,y}(v)
\eqdef
\int_G K_{0,y}(u,v)\,\overline{\xi(u)}\,\dif\nu(u)
\qquad(\xi\in\hG,\ y,v\in Y).
\end{equation}
Let $\Om$ be the set of all
``nontrivial frequencies'':
\[
\Om\eqdef\bigl\{\xi\in\hG\colon\ 
\exists y\in Y\quad L_{\xi,y}(y)>0\bigr\}.
\]
The following simple fact was not noticed in~\cite{HerreraMaximenkoRamos2022}.

\begin{prop}
\label{prop:Om_is_open}
$\Om$ is an open subset of $\hG$.
\end{prop}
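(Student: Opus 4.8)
The plan is to realize $\Om$ as a union of open sets and reduce the problem to a single continuity statement. Writing
\[
\Om=\bigcup_{y\in Y}\Om_y,
\qquad
\Om_y\eqdef\{\xi\in\hG\colon\ L_{\xi,y}(y)>0\},
\]
and recalling that an arbitrary union of open sets is open, it suffices to prove that each slice $\Om_y$ is open in $\hG$. So I would fix $y\in Y$ and study the single-variable function $\xi\mapsto L_{\xi,y}(y)$ on $\hG$.

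The key observation is that, by the definition~\eqref{eq:L_def}, this function is precisely the Fourier transform of $K_{0,y}(\cdot,y)$, namely
\[
L_{\xi,y}(y)=\int_G K_{0,y}(u,y)\,\conj{\xi(u)}\,\dif\nu(u).
\]
First I would use the technical assumption~\eqref{eq:K_tecnhical_assumption}, specialized to $v=y$, to conclude that $K_{0,y}(\cdot,y)\in L^1(G,\nu)$. Then, invoking the classical fact that the Fourier transform carries $L^1(G,\nu)$ into $C_0(\hG)$ (see, e.g., \cite[Chapter~IV]{Folland2016}), I obtain that $\xi\mapsto L_{\xi,y}(y)$ is continuous on $\hG$. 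Since $L_{\xi,y}(y)$ is the value of a positive-definite object on its diagonal, it is real; hence $\Om_y$ is the preimage of the open ray $(0,+\infty)\subseteq\bR$ under a continuous map, and is therefore open.

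The argument is short, and the only points that need care are the two invoked facts: that hypothesis~\eqref{eq:K_tecnhical_assumption} indeed furnishes an $L^1$ function in the ``horizontal'' variable, and that the Fourier transform of an $L^1$ function is continuous. Neither is a genuine obstacle here---the former is exactly what the technical assumption was designed to provide, and the latter is standard for locally compact abelian groups. The one subtlety worth flagging is the reality of $L_{\xi,y}(y)$, which is what makes the condition ``$>0$'' meaningful and the relevant preimage that of an open subset of $\bR$; this follows from the positive-definiteness of the kernel in the construction of~\cite{HerreraMaximenkoRamos2022}.
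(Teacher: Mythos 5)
Your proof is correct and takes essentially the same route as the paper's: both hinge on the observation that assumption~\eqref{eq:K_tecnhical_assumption} puts $K_{0,y}(\cdot,y)$ into $L^1(G)$, so that $\xi\mapsto L_{\xi,y}(y)$ is continuous and its positivity persists on a neighborhood. Your decomposition of $\Om$ into the union of slices $\Om_y$ is merely a repackaging of the paper's pointwise neighborhood argument, and your remark on the reality of $L_{\xi,y}(y)$ (it is the diagonal of the reproducing kernel of $\hH_\xi$, hence nonnegative) is a valid point that the paper leaves implicit.
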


\begin{proof}
Let $\eta\in\Om$.
We choose $y$ in $Y$ such that $L_{\eta,y}(y)>0$.
Thanks to~\eqref{eq:K_tecnhical_assumption},
$K_{0,y}(\cdot, y)\in L^1(G)$.
Thus, $L_{\bullet,y}(y)$ is a continuous function.
Therefore, there is an open neighborhood $A$ of $\eta$
such that $L_{\xi,y}(y)>0$ for each $\xi$ in $A$.
\end{proof}

As it was shown in~\cite{HerreraMaximenkoRamos2022}, for each $\xi$ in $\Om$, $(L_{\xi,y})_{y\in Y}$ is
the reproducing kernel of a certain RKHS $\hH_\xi$ which is naturally embedded into $L^2(Y)$, and the horizontal Fourier transform of $H$ decomposes into the direct integral of these ``fibers'':
\[
(F\otimes I)H
=\int_{\Om}^{\oplus} \hH_\xi\,\dif\hnu(\xi).
\]
We suppose there exists a measurable function family $(q_\xi)_{\xi\in\Om}$ such that
\begin{equation}
\label{eq:L_decomposition}
L_{\xi,y}(v)=\overline{q_\xi(y)}q_\xi(v)
\qquad(\xi\in\Om,\ v,y\in Y).
\end{equation}
This means that for every $\xi$ in $\Om$, the space $\hH_\xi$
is one-dimensional:
$\hH_\xi=\bC q_\xi$.
Moreover, $q_\xi$ is normalized.  Let $\hH\eqdef(F\otimes I)H$
and
$\FourierH\colon H\to\hH$ be the compression of $F\otimes I$.  Define $N\colon\hH\to L^2(\Om)$ by
\[
(N g)(\xi) \eqdef
\langle g(\xi,\cdot), q_\xi\rangle_{L^2(Y)}
=
\int_Y g(\xi,v) \overline{q_\xi(v)}\,\dif{}\la(v).
\]
Then $N$ is an isometric isomorphism, and its adjoint acts by
\[
(N^\ast h)(\xi,v)
= h(\xi) q_\xi(v).
\]
Define $R\colon H\to L^2(\Om)$
as the composition
\[
R \eqdef N \FourierH.
\]
More explicitly,
\[
(R f)(\xi)
=
\langle ((F\otimes I)f)(\xi,\cdot),q_\xi\rangle_{L^2(Y)}
=\int_Y ((F\otimes I)f)(\xi,v)\, \overline{q_\xi(v)}\,\dif\la(v).
\]
For every $f$ in $H$ such that $f\in L^1(G\times Y)$,
\begin{equation}
\label{eq:R_integral_representation}
(R f)(\xi)
=
\int_{G\times Y}
f(u,v)\,
\overline{\xi(u)q_\xi(v)}\,
\dif\nu(u)\,
\dif\la(v).
\end{equation}
Then $R$ is an isometric isomorphism of Hilbert spaces.
Its adjoint acts by
$R^\ast=\FourierH^\ast N^\ast$.

Given $b$ in $L^\infty(\Om)$, we denote by $M_b$ the corresponding multiplication operator acting in $L^2(\Om)$.
Moreover, for every $b$ in $L^\infty(\Om)$,
we define $V_b\in\cB(H)$ by
\begin{equation}
\label{eq:Vb_def}
V_b \eqdef R^\ast M_b R.
\end{equation}
It is easy to see that $V_b\in\cC(\rho)$.
Figure~\ref{fig:operators} shows the main objects of this construction.

\tikzset{spacenode/.style
={rounded corners, text centered,
	text width=10ex, minimum size = 4ex,
	draw, fill = lightgreen,
	outer sep = 0.4ex}}

\tikzset{widespacenode/.style
={rounded corners, text centered,
	text width=16ex, minimum size = 4ex,
	draw, fill = lightgreen,
	outer sep = 0.4ex}}

\tikzset{myedge/.style
={darkgreen, thick, -Stealth}}

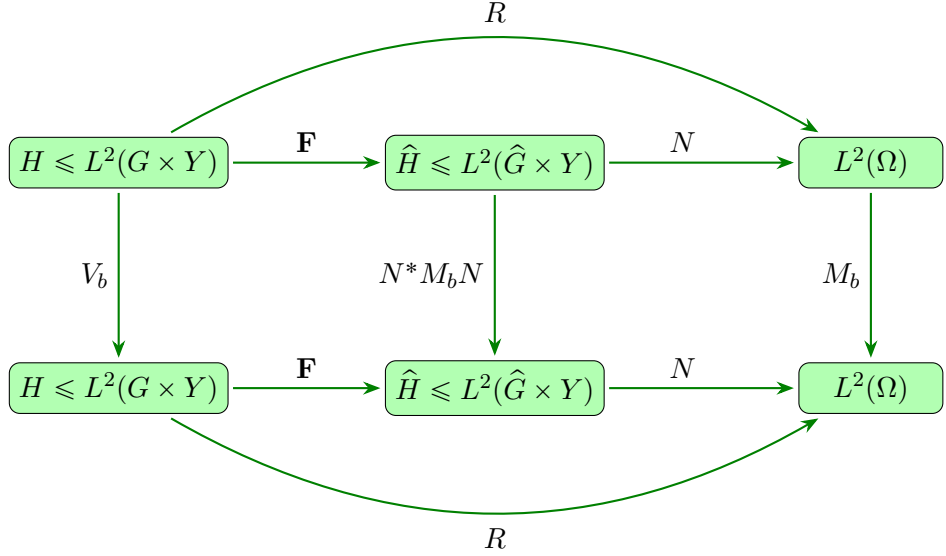
\begin{figure}[hbt]
\centering
\begin{tikzpicture}
	\node [widespacenode] (H1)
	at (-5, 0) {$H\le L^2(G\times Y)$};
	\node [widespacenode] (H2)
	at (-5, -3) {$H\le L^2(G\times Y)$};
	\node [widespacenode] (hH1)
	at (0, 0) {$\hH\le L^2(\hG\times Y)$};
	\node [widespacenode] (hH2)
	at (0, -3) {$\hH\le L^2(\hG\times Y)$};
	\node [spacenode] (L2Om1)
	at (5, 0) {$L^2(\Om)$};
	\node [spacenode] (L2Om2)
	at (5, -3) {$L^2(\Om)$};
	\draw [myedge]
	(H1) edge (hH1);
	\node at (-2.5, 0) [above]
	{$\FourierH$};
	\draw [myedge]
	(H2) edge (hH2);
	\node at (-2.5, -3) [above]
	{$\FourierH$};
	\draw [myedge]
	(hH1) edge (L2Om1);
	\draw [myedge]
	(hH2) edge (L2Om2);
	\node at (2.5, 0) [above]
	{$N$};
	\node at (2.5, -3) [above]
	{$N$};
	\draw [myedge, bend left]
	(H1) edge (L2Om1);
	\node at (0, 2)
	{$R$};
	\draw [myedge, bend right]
	(H2) edge (L2Om2);
	\node at (0, -5)
	{$R$};
	\draw [myedge] (H1) edge (H2);
	\node at (-5, -1.5) [left]
	{$V_b$};
	\draw [myedge]
	(hH1) edge (hH2);
	\node at (0, -1.5) [left]
	{$N^\ast M_b N$};
	\draw [myedge]
	(L2Om1) -- (L2Om2);
	\node at (5, -1.5) [left]
	{$M_b$};
\end{tikzpicture}
\caption{
	\label{fig:operators}
	$V_b$ and $M_b$.
}
\end{figure}

Define
$\La\colon L^\infty(\Om)\to\cC(\rho)$
by
\[
\La(b) \eqdef V_b.
\]

We use the following terminology
(see, e.g., Blackadar~\cite[Section III.1.8]{Blackadar2006}
or a slightly different approach in Sakai~\cite[Sections~1.1 and 1.16]{Sakai1971}).
A \emph{von Neumann algebra} is a self-adjoint subalgebra of $\cB(H)$
(where $H$ is a Hilbert space) containing the identity operator and closed in the weak operator topology.
An \emph{isomorphism} of C*-algebras is a *-isomorphism.
Every isomorphism of C*-algebras is isometric.
A \emph{W*-algebra}
is a C*-algebra which is isomorphic to some von Neumann algebra.
In particular, we always suppose W*-algebras to be unital.

It is well known 
(see, e.g., Sunder~\cite[Exercise~(0.4.5)]{Sunder1987} or Murphy~\cite[Examples~2.5.1 and 4.1.2]{Murphy1990})
that $L^\infty(\Om)$ is isomorphic to the von Neumann algebra of the corresponding multiplication operators;
thus, $L^\infty(\Om)$ is a W*-algebra.

The following theorem was proved in~\cite{HerreraMaximenkoRamos2022}.

\begin{thm}
\label{thm:centralizer_through_Vb}
$\La$ is an isomorphism
of W*-algebras $L^\infty(\Om)$ and $\cC(\rho)$.
In particular,
\begin{equation}
\label{eq:centralizer_via_R}
\cC(\rho)
=\bigl\{V_b\colon\ b\in L^\infty(\Om)\bigr\}.
\end{equation}
\end{thm}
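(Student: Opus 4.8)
The plan is to present $\La$ as a composition of two canonical *-isomorphisms and then pin down its range. Because $R\colon H\to L^2(\Om)$ is an isometric isomorphism, we have $R^\ast=R^{-1}$, so $\operatorname{Ad}_{R^\ast}\colon T\mapsto R^\ast T R$ is a *-isomorphism of $\cB(L^2(\Om))$ onto $\cB(H)$. As recalled just before the statement, $b\mapsto M_b$ is an isometric *-isomorphism of $L^\infty(\Om)$ onto the von Neumann algebra $\cM\eqdef\{M_b\colon b\in L^\infty(\Om)\}$; it is injective because $(\Om,\hnu)$ is $\sigma$-finite. Hence $\La=\operatorname{Ad}_{R^\ast}\circ(b\mapsto M_b)$ is an injective unital *-homomorphism (note $\La(1)=R^\ast R=I_H$), and every injective *-homomorphism of C*-algebras is isometric. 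It therefore remains only to identify the range $\La(L^\infty(\Om))=R^\ast\cM R$ with $\cC(\rho)$, which is precisely \eqref{eq:centralizer_via_R}.

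Next I would transport the problem to $L^2(\Om)$ through the intertwining property of $R$. A short computation with the horizontal Fourier transform shows that translating by $a$ multiplies the value of $F\otimes I$ at $\xi$ by $\conj{\xi(a)}$; since this factor is constant in $v$, it passes through the compression $N$ onto the one-dimensional fibers $\bC q_\xi$, giving
\[
R\,\rho(a)\,R^\ast=M_{e_a},\qquad e_a(\xi)\eqdef\conj{\xi(a)}\quad(\xi\in\Om).
\]
Each $e_a$ is bounded and continuous on $\Om$, so $M_{e_a}\in\cM$. In view of \eqref{eq:centralizer_rho_H}, $\operatorname{Ad}_R$ then carries $\cC(\rho)$ onto the commutant $\{M_{e_a}\colon a\in G\}'$ and carries $R^\ast\cM R$ onto $\cM$. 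Thus \eqref{eq:centralizer_via_R} is equivalent to the single identity $\{M_{e_a}\colon a\in G\}'=\cM$.

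The inclusion $\cM\subseteq\{M_{e_a}\}'$ is clear, since multiplication operators commute. The reverse inclusion is the crux and the step I expect to be the main obstacle. Here I would invoke that $\cM$ is \emph{maximal abelian} in $\cB(L^2(\Om))$ (valid since $L^2(\Om)$ is separable and $(\Om,\hnu)$ is $\sigma$-finite), so that $\cM'=\cM$; it then suffices to show that the von Neumann algebra $\{M_{e_a}\}''$ generated by the $M_{e_a}$ is all of $\cM$, for then $\{M_{e_a}\}'=(\{M_{e_a}\}'')'=\cM'=\cM$. To obtain $\{M_{e_a}\}''=\cM$ I would verify that the $e_a$ generate the measurable structure of $\Om$ modulo $\hnu$-null sets. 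The family $\{e_a\}$ separates points of $\Om$, since for $\xi\neq\xi'$ in $\hG\supseteq\Om$ there is $a\in G$ with $\xi(a)\neq\xi'(a)$; moreover $\Om$ is second countable, being an open subset (Proposition~\ref{prop:Om_is_open}) of the second-countable group $\hG$ (recall $G$ is metrizable and $\sigma$-compact), and a countable subfamily indexed by a countable dense subset of $G$ already separates points. A countable separating family of continuous functions generates the Borel $\sigma$-algebra of the second-countable space $\Om$, and the unital $\ast$-algebra spanned by the $e_a$ is closed under products ($e_a e_{a'}=e_{a+a'}$) and conjugation ($\conj{e_a}=e_{-a}$); hence every $b\in L^\infty(\Om)$ is a weak-$\ast$ limit of this $\ast$-algebra, so $M_b\in\{M_{e_a}\}''$. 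This yields $\{M_{e_a}\}''=\cM$, proves \eqref{eq:centralizer_via_R}, and together with the first paragraph shows that $\La$ is an isomorphism of the W*-algebras $L^\infty(\Om)$ and $\cC(\rho)$.
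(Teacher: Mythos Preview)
The paper does not give its own proof of this theorem; it simply records the result and cites \cite{HerreraMaximenkoRamos2022} for the proof. There is therefore nothing in the present paper to compare your argument against.

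As for the argument itself, your route is sound and standard: factor $\La=\operatorname{Ad}_{R^\ast}\circ(b\mapsto M_b)$, verify the intertwining $R\rho(a)R^\ast=M_{e_a}$, and reduce \eqref{eq:centralizer_via_R} to $\{M_{e_a}\colon a\in G\}'=\cM$. The only step that deserves one more sentence of justification is the passage from ``the $e_a$ generate the Borel $\sigma$-algebra of $\Om$'' to ``$\{M_{e_a}\}''=\cM$''. Both claims are correct, but you should make the link explicit: for a family of bounded measurable functions on a $\sigma$-finite space, the von Neumann algebra generated by the associated multiplication operators is $\{M_g\colon g\text{ is }\Sigma_0\text{-measurable}\}$, where $\Sigma_0$ is the sub-$\sigma$-algebra they generate (modulo null sets); hence $\Sigma_0=\text{Borel}$ gives $\{M_{e_a}\}''=\cM$. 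For the $\sigma$-algebra statement, your observation that $\Om$ is Polish (open in the Polish group $\hG$) lets you invoke the Lusin--Souslin theorem: the continuous injection $\xi\mapsto(e_{a_n}(\xi))_n$ into $\bT^{\bN}$, for a countable dense $\{a_n\}\subseteq G$, is a Borel isomorphism onto its image, so the $e_{a_n}$ generate the Borel $\sigma$-algebra of $\Om$. With that clarification the proof is complete.
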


In other words,
this theorem means that
$R$ diagonalizes each operator belonging to $\cC(\rho)$,
and $R\cC(\rho)R^\ast$
is the algebra of all multiplication operators acting in $L^2(\Om)$.
As a consequence, $\cC(\rho)$ is commutative.

Given $a$ in $G$,
we define $E_a\colon\Om\to\bC$ by
\[
E_a(\xi) \eqdef \xi(a).
\]
In the following proposition,
we compute the image of $K_{x,y}$
with respect to $\FourierH$ and $R$;
see Figure~\ref{fig:image_of_K}.
This is a simple generalization
of~\cite[Proposition~7.4]{HerreraMaximenkoRamos2022}.

\begin{prop}
\label{prop:RK}
Let $x\in G$, $y\in Y$.
Then
$R K_{x,y}
=E_{-x}\,\overline{q_{\bullet}(y)}$,
i.e., for every $\xi$ in $\Om$,
\begin{equation}
\label{eq:RK}
(R K_{x,y})(\xi)
=\overline{\xi(x) q_\xi(y)}.
\end{equation}
Moreover, $q_{\bullet}(y)\in L^2(\Om)$,
and
\begin{equation}
\label{eq:norm_q_with_fixed_y}
\int_{\Om} L_{\xi,y}(y)\,\dif\hnu(\xi)
=\int_{\Om} |q_\xi(y)|^2\,\dif\hnu(\xi)
=K_{0,y}(0,y).
\end{equation}
\end{prop}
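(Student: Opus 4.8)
The plan is to compute $RK_{x,y}$ directly from the factorization $R=N\FourierH$, exploiting the horizontal invariance~\eqref{eq:K_is_invariant} together with the multiplicativity of the characters $\xi$, and then to deduce the norm identity~\eqref{eq:norm_q_with_fixed_y} by comparing the $L^2(\Om)$-norm of $RK_{x,y}$ with the $H$-norm of $K_{x,y}$ via the isometry of $R$. I begin with the horizontal Fourier transform of $K_{x,y}$. Since $K_{x,y}\in H\le L^2(G\times Y)$ and, by the technical assumption~\eqref{eq:K_tecnhical_assumption}, the slice $K_{0,y}(\cdot,v)$ lies in $L^1(G)$ (uniformly in $v$), the compression $\FourierH$ acts on $K_{x,y}$ through the pointwise integral formula. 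Applying~\eqref{eq:K_is_invariant}, performing the substitution $u\mapsto u+x$ (which preserves the Haar measure $\nu$), and using $\overline{\xi(u+x)}=\overline{\xi(u)}\,\overline{\xi(x)}$, I recognize the inner integral as the definition~\eqref{eq:L_def} and obtain
\[
(\FourierH K_{x,y})(\xi,v)=\overline{\xi(x)}\,L_{\xi,y}(v).
\]

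Next I would apply $N$. By the definition of $N$ and the factorization~\eqref{eq:L_decomposition},
\[
(R K_{x,y})(\xi)
=\overline{\xi(x)}\int_Y L_{\xi,y}(v)\,\overline{q_\xi(v)}\,\dif\la(v)
=\overline{\xi(x)}\,\overline{q_\xi(y)}\int_Y |q_\xi(v)|^2\,\dif\la(v).
\]
Since $q_\xi$ is normalized in $L^2(Y)$, the last integral equals $1$, which yields~\eqref{eq:RK} and hence $R K_{x,y}=E_{-x}\,\overline{q_\bullet(y)}$.

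For~\eqref{eq:norm_q_with_fixed_y}, the middle equality is immediate from $L_{\xi,y}(y)=\overline{q_\xi(y)}q_\xi(y)=|q_\xi(y)|^2$. For the outer equality I would use that $R$ is isometric, so $\|R K_{x,y}\|_{L^2(\Om)}^2=\|K_{x,y}\|_H^2$. Because each $\xi\in\hG$ is a unitary character, $|\xi(x)|=1$, so the left-hand side equals $\int_\Om|q_\xi(y)|^2\,\dif\hnu(\xi)$; on the other hand, the reproducing property together with the invariance~\eqref{eq:K_is_invariant} gives $\|K_{x,y}\|_H^2=K_{x,y}(x,y)=K_{0,y}(0,y)$. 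As $K_{x,y}\in H$, this common value is finite, whence $q_\bullet(y)\in L^2(\Om)$.

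The computations are short, and the only point requiring genuine care is the justification that $\FourierH$ acts on $K_{x,y}$ through the pointwise integral formula (equivalently, that the interchange of integrals implicit in~\eqref{eq:R_integral_representation} is legitimate), which is precisely what the integrability hypothesis~\eqref{eq:K_tecnhical_assumption} is designed to secure; everything else is a direct substitution using the one-dimensionality and normalization of the fibers $\hH_\xi=\bC q_\xi$.
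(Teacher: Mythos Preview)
Your proof is correct and follows essentially the same approach as the paper: compute $\FourierH K_{x,y}$ via the translation invariance and a change of variable to obtain $\overline{\xi(x)}\,L_{\xi,y}(v)$, then apply $N$ using the factorization $L_{\xi,y}(v)=\overline{q_\xi(y)}\,q_\xi(v)$ and the normalization of $q_\xi$, and finally read off the norm identity from the isometry of $R$ and the reproducing property. The only cosmetic difference is that the paper recognizes $(\FourierH K_{x,y})(\xi,\cdot)$ as the scalar multiple $\overline{\xi(x)q_\xi(y)}\,q_\xi$ and lets $N$ extract that coefficient directly, whereas you write out the integral against $\overline{q_\xi}$ explicitly; this is the same computation.
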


\begin{proof}
By~\eqref{eq:K_is_invariant}
and~\eqref{eq:L_def},
\begin{align*}
(\FourierH K_{x,y})(\xi,v)
&=
\int_G K_{0,y}(u-x,v)\,\overline{\xi(u)}\,\dif\nu(u)
=
\overline{\xi(x)}\,
\int_G K_{0,y}(t,v)\,\overline{\xi(t)}\,\dif\nu(t)
\\
&=
\overline{\xi(x)}\,
L_{\xi,y}(v)
=
\overline{\xi(x)q_\xi(y)}\,
q_\xi(v).
\end{align*}
So,
\[
(\FourierH K_{x,y})(\xi, \cdot)
=
\overline{\xi(x)q_\xi(y)}\,q_\xi(\cdot).
\]
By definitions of $R$ and $N$,
this implies that
\[
(R K_{x,y})(\xi)
= (N \FourierH K_{x,y})(\xi)
= \overline{\xi(x) q_\xi(y)}.
\]
Finally, by the isometric property of $R$,
\[
\int_\Om |q_\xi(y)|^2\,\dif\hnu(\xi)
=\bigl\|q_{\bullet}(y)\bigr\|_{L^2(\Om)}^2
=\bigl\|K_{x,y}\bigr\|_H^2
=K_{x,y}(x,y)
=K_{0,y}(0,y).
\qedhere
\]
\end{proof}

\begin{figure}[hbt]
\centering
\begin{tikzpicture}
	\node [widespacenode, text width=18ex] (H1)
	at (-5, 0) {$K_{x,y}\in H$};
	\node [widespacenode, text width=20ex] (hH1)
	at (0, 0) {$E_{-x} L_{\bullet,y}(\cdot)\in \hH$};
	\node [spacenode, text width=20ex] (L2Om1)
	at (5, 0) {$E_{-x}\,\overline{q_{\bullet}(y)}\in L^2(\Om)$};
	\draw [myedge]
	(H1) edge (hH1);
	\node at (-2.5, 0) [above]
	{$\FourierH$};
	\draw [myedge]
	(hH1) edge (L2Om1);
	\node at (2.5, 0) [above]
	{$N$};
	\draw [myedge, bend left]
	(H1) edge (L2Om1);
	\node at (0, 2)
	{$R$};
\end{tikzpicture}
\caption{
	\label{fig:image_of_K}
 Image of $K_{x,y}$
 with respect to $\FourierH$
 and $R$.
}
\end{figure}

The following technical proposition
is used in Proposition~\ref{prop:Lambda_inverse}.

\begin{prop}
\label{prop:partition_of_Omega}
There exists a finite or countable subset $Y_0$ of $Y$
and a measurable partition $(\Om_y)_{y\in Y_0}$ of $\Om$
such that for every $\xi$ in $\Om_y$,
$L_{\xi,y}(y)>0$.
\end{prop}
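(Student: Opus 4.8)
The plan is to realize $\Om$ as a countable union of open sets, each witnessed by a single point of $Y$, and then to disjointify that union into the required measurable partition. The whole argument reduces the defining existential quantifier in $\Om=\{\xi\in\hG\colon\exists y\in Y,\ L_{\xi,y}(y)>0\}$ to a countable choice of witnesses.

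First I would introduce, for each $y\in Y$, the set
\[
U_y \eqdef \bigl\{\xi\in\hG\colon\ L_{\xi,y}(y)>0\bigr\}.
\]
Exactly as in the proof of Proposition~\ref{prop:Om_is_open}, assumption~\eqref{eq:K_tecnhical_assumption} gives $K_{0,y}(\cdot,y)\in L^1(G)$, so $L_{\bullet,y}(y)$ is continuous and hence $U_y$ is open. By the very definition of $\Om$, every $\xi\in\Om$ lies in some $U_y$, so $(U_y)_{y\in Y}$ is an open cover of $\Om$. Next I would extract a countable subcover. The standing hypotheses force $\hG$ to be second countable (since $G$ is metrizable and $\sigma$-compact it is second countable, and Pontryagin duality preserves second countability; equivalently, separability of $L^2(G)$ yields second countability of $\hG$), so the open subspace $\Om\subseteq\hG$ of Proposition~\ref{prop:Om_is_open} is itself second countable and therefore Lindel\"of. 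Consequently the relatively open cover $(U_y\cap\Om)_{y\in Y}$ admits a countable subcover indexed by some finite or countable $Y_0\subseteq Y$, giving $\Om=\bigcup_{y\in Y_0}(U_y\cap\Om)$.

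Finally I would disjointify. Enumerating $Y_0=\{y_1,y_2,\dots\}$ and setting
\[
\Om_{y_n}\eqdef (U_{y_n}\cap\Om)\setminus\bigcup_{k<n}U_{y_k}\qquad(n\ge 1),
\]
each $\Om_{y_n}$ is Borel (hence measurable), the sets are pairwise disjoint, and their union is $\bigcup_{y\in Y_0}(U_y\cap\Om)=\Om$; thus $(\Om_y)_{y\in Y_0}$ is a measurable partition of $\Om$. Since $\Om_{y_n}\subseteq U_{y_n}$, every $\xi\in\Om_{y_n}$ satisfies $L_{\xi,y_n}(y_n)>0$, which is precisely the asserted property.

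The one nonroutine ingredient is the reduction from an arbitrary open cover to a countable one; I expect this to be the crux, and it rests entirely on the second countability (equivalently, the Lindel\"of property) of $\hG$, which I would derive from the metrizability and $\sigma$-compactness of $G$ via duality. The openness of the $U_y$, the disjointification, and all measurability claims are then immediate.
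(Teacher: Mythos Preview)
Your proof is correct and follows essentially the same approach as the paper: identify the open sets where $L_{\bullet,y}(y)>0$, use the Lindel\"of property of $\hG$ (derived from the standing assumptions on $G$) to pass to a countable subcover, and then disjointify. The only cosmetic difference is that the paper works with arbitrary open sets $A$ satisfying $L_{\xi,y}(y)>0$ on $A$ rather than the maximal such sets $U_y$, and it explicitly merges pieces corresponding to repeated $y$'s; your formulation via a subset $Y_0\subseteq Y$ sidesteps that bookkeeping.
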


\begin{proof}
The technical conditions on $G$ imply that $\hG$ is a metrizable separable space.
Therefore, $\hG$ and $\Om$ are Lindel\"{o}f spaces.
Let $P$ be the set of all pairs $(y,A)$, where
$y\in Y$, $A$ is an open subset of $\Om$,
and $L_{\xi,y}(y)>0$ for every $\xi$ in $A$.
Reasoning as in the proof of Proposition~\ref{prop:Om_is_open},
we see that the collection $\{A\colon\ (y,A)\in P\}$
is an open cover of $\Om$.
Therefore, there is a finite or countable subcover of $\Om$.
In other words,
there exists a set $J\subseteq\bN$
($J=\bN$ or $J=\{1,\ldots,m\}$, $m\in\bN$)
and a family $((y_j, A_j))_{j\in J}$
such that $(y_j,A_j)\in P$ for each $j$ in $J$
and $\bigcup_{j\in J}A_j=\Om$.
For each $j$ in $J$, we set
$B_j\eqdef A_j\setminus\left(\bigcup_{k<j}A_k\right)$.
Then, the sets $B_j$ are pairwise disjoint,
and $\bigcup_{j\in J}B_j=\Om$.
Let $Y_0\eqdef\{y_j\colon\ j\in J,\ B_j\ne\emptyset\}$.
For each $y$ in $Y_0$, we set $\Om_y\eqdef\cup\{B_j\colon\ y_j=y\}$.
Thus, we obtain a finite or countable family $(\Om_y)_{y\in Y_0}$
whose union is $\Om$
and whose elements are measurable subsets of $\Om$.
\end{proof}

Now, we are ready to give an explicit formula for $\La^{-1}$.
The next proposition is a more precise version
of \cite[Corollary 7.8]{HerreraMaximenkoRamos2022}.
In the present version,
we emphasize that~\eqref{eq:Lambda_inverse}
is an equality of equivalence classes
(in other words, it can be treated as an equality almost everywhere),
and we explain how to define
the right-hand side of~\eqref{eq:Lambda_inverse}.

We recall a technical concept from measure theory.
Suppose that $f$ is an element of $L^2(\Om)$, i.e.,
$f$ is an equivalence class
consisting of some functions $g$ belonging to $\cL^2(\Om)$,
and suppose that $Z$ is a measurable subset of $\Om$.
Then, the restriction $f|_Z$ is defined as the set of the restrictions of all representants of $f$:
\[
f|_Z \eqdef \{g|_Z\colon\ g\in f\}.
\]

\begin{prop}
\label{prop:Lambda_inverse}
Let $S\in\cC(\rho)$.
Then, the following equality holds in $L^\infty(\Om)$:
\begin{equation}
\label{eq:Lambda_inverse}
(\La^{-1}(S))(\xi)
=\frac{(RSK_{0,y})(\xi)}{\conj{q_\xi(y)}}
\qquad(\xi\in\Om),
\end{equation}
where $y\in Y_0$ such that $\xi\in\Om_y$.
\end{prop}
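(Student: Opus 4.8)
The plan is to use the isomorphism $\La$ from Theorem~\ref{thm:centralizer_through_Vb} to reduce everything to a computation with multiplication operators. Since $S\in\cC(\rho)$, by~\eqref{eq:centralizer_via_R} there is a unique $b\in L^\infty(\Om)$ with $S=V_b=\La(b)$, so that $\La^{-1}(S)=b$, and the goal becomes to recover $b$ from $S$ by the stated formula.

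First I would exploit that $R\colon H\to L^2(\Om)$ is an isometric isomorphism, hence $RR^\ast=I_{L^2(\Om)}$. Applying $R$ on the left of $S=R^\ast M_b R$ (see~\eqref{eq:Vb_def}) gives $RS=M_b R$. Feeding $K_{0,y}$ into this identity yields $RSK_{0,y}=M_b(RK_{0,y})=b\cdot(RK_{0,y})$ as elements of $L^2(\Om)$. Next I would invoke Proposition~\ref{prop:RK} with $x=0$: since $\xi(0)=1$ for every character $\xi$, formula~\eqref{eq:RK} reduces to $(RK_{0,y})(\xi)=\overline{q_\xi(y)}$, i.e., $RK_{0,y}=\overline{q_\bullet(y)}$ in $L^2(\Om)$. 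Therefore
\[
RSK_{0,y}=b\cdot\overline{q_\bullet(y)}\qquad\text{in }L^2(\Om).
\]
Now fix $y\in Y_0$ and restrict to $\Om_y$. By the defining property of the partition from Proposition~\ref{prop:partition_of_Omega}, $L_{\xi,y}(y)>0$ for every $\xi\in\Om_y$; combined with~\eqref{eq:L_decomposition}, $L_{\xi,y}(y)=|q_\xi(y)|^2$, so $q_\xi(y)\neq 0$ on all of $\Om_y$. Hence dividing the last displayed identity by the nonvanishing function $\overline{q_\bullet(y)}$ gives $b(\xi)=(RSK_{0,y})(\xi)/\overline{q_\xi(y)}$ for almost every $\xi\in\Om_y$.

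Finally I would assemble the pieces: since $(\Om_y)_{y\in Y_0}$ is a countable measurable partition of $\Om$, the pointwise recipe for the right-hand side of~\eqref{eq:Lambda_inverse} — restrict a chosen representative of $RSK_{0,y}\in L^2(\Om)$ to $\Om_y$ and divide by the function $\overline{q_\bullet(y)}|_{\Om_y}$, which is genuinely nonzero there — produces a measurable function on $\Om$ that agrees with $b$ a.e.\ on each $\Om_y$, hence a.e.\ on $\Om$. This is exactly equation~\eqref{eq:Lambda_inverse}, understood as an equality of equivalence classes in $L^\infty(\Om)$.

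I expect the only real subtlety — the main obstacle — to be the measure-theoretic bookkeeping rather than any analytic difficulty: one must make sense of the right-hand side as a well-defined element of $L^\infty(\Om)$, using that $q_\bullet(y)$ is a genuine (not merely a.e.-defined) function that is nonvanishing on each $\Om_y$, and verify that gluing the local formulas over the countable family $(\Om_y)_{y\in Y_0}$ yields a single equivalence class (here straightforward, since the partition is disjoint). The core algebraic identity $RS=M_bR$ and the evaluation $RK_{0,y}=\overline{q_\bullet(y)}$ are immediate from the material already established.
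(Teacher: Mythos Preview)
Your proposal is correct and follows essentially the same approach as the paper's proof: write $S=V_b$ via Theorem~\ref{thm:centralizer_through_Vb}, use Proposition~\ref{prop:RK} to get $RSK_{0,y}=b\cdot\overline{q_\bullet(y)}$, divide by the nonvanishing $\overline{q_\bullet(y)}$ on each $\Om_y$, and glue over the countable partition. The paper's version differs only cosmetically (it names the right-hand side $b$ and the element from Theorem~\ref{thm:centralizer_through_Vb} $c$, then shows $b=c$), and your remark about the measure-theoretic bookkeeping being the only real subtlety matches exactly what the paper emphasizes.
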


\begin{proof}
First, let us explain the definition of the right-hand side of~\eqref{eq:Lambda_inverse}.
For each $y$ in $Y_0$,
$\xi\mapsto \conj{q_\xi(y)}$ is a nonvanishing measurable function defined on $\Om_y$,
and the restriction of $RSK_{0,y}$ to $\Om_y$ is an element of $L^2(\Om_y)$.
Therefore, the quotient
\[
b_y(\xi) \eqdef \frac{(RSK_{0,y})(\xi)}{\conj{q_\xi(y)}}
\qquad(\xi\in\Om_y)
\]
is an equivalence class of measurable functions on $\Om_y$.
We define $b$ as an equivalence class of measurable functions on $\Om$
such that for every $y$ in $Y_0$, $b|_{\Om_y}=b_y$.
Since $(\Om_y)_{y\in Y_0}$ is a finite or countable measurable partition of $\Om$, this definition makes sense.

By Theorem~\ref{thm:centralizer_through_Vb},
there exists $c$ in $L^\infty(\Om)$ such that $S=V_c=R^\ast M_c R$.
Let us prove that $b=c$.
For every $y$ in $Y_0$,
by Proposition~\ref{prop:RK},
$RK_{0,y}=\conj{q_{\bullet}(y)}$.
Therefore, we obtain the following equality in $L^2(\Om_y)$:
\[
c(\xi)\conj{q_\xi(y)}
=(M_c\, \conj{q_{\bullet}(y)})(\xi)
=(RS K_{0,y})(\xi)
=b_y(\xi)\,\conj{q_\xi(y)}
=b(\xi)\,\conj{q_\xi(y)}.
\]
We conclude that the restrictions of $b$ and $c$ to $\Om_y$ coincide on each $y$ in $Y_0$, and $b=c$.

If we work with functions instead of equivalence classes,
then the latter reasoning takes the following form.
For each $y$ in $Y_0$,
we get $\hnu(\{\xi\in\Om_y\colon\ b(\xi)\ne c(\xi)\})=0$.
The union of these sets has measure $0$
because $Y_0$ is finite or countable.
\end{proof}

\section{Integral form of translation-invariant operators}\label{sec:integral_representation}

In this section, our aim is to write every operator $V_b$, where $b\in L^\infty(\Om)$,
as an integral operator.
First, we apply Proposition~\ref{prop:bounded_operators_as_integral_operators} to our situation where $X=G\times Y$ and $T\in\cC(\rho)$.

\begin{prop}
\label{prop:initial_integral_form_of_bouned_invariant_operator}
Let $T\in \cC(\rho)$.
Define $K_T\colon G\times Y\times G\times Y\to\bC$ by
\begin{equation}
\label{eq:KT_four_arguments_via_TK}
K_T(x,y,u,v)
\eqdef (T K_{u,v})(x,y)
\qquad(x,u\in G,\ y,v\in Y).
\end{equation}
Then, for every $f$ in $H$
and every $(x,y)$ in $G\times Y$,
\begin{equation}
\label{eq:initial_integral_form_of_bouned_invariant_operator}
(T f)(x,y)
=
\int_{G\times Y}
K_T(x,y,u,v)
f(u,v)
\,\dif\nu(u)\,\dif\la(v).
\end{equation}
Moreover, for every $u,x$ in $G$
and $v,y$ in $Y$,
\begin{equation}
\label{eq:KT_invariant_property}
K_T(x,y,u,v) = K_T(x-u, y, 0, v)
\end{equation}
and
\begin{equation}
\label{eq:KT_1_2_belongs_to_H}
K_T(\cdot, \cdot, 0, v)\in H,\qquad
\overline{K_T(0, y, \cdot, \cdot)}\in H.
\end{equation}
\end{prop}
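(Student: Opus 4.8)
The plan is to reduce the statement to Proposition~\ref{prop:bounded_operators_as_integral_operators}, applied with $X=G\times Y$ and $\mu=\nu\otimes\la$, and then to extract the extra invariance~\eqref{eq:KT_invariant_property} from the commutation relation that defines $\cC(\rho)$.

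First I would note that $\cC(\rho)\subseteq\cB(H)$, so Proposition~\ref{prop:bounded_operators_as_integral_operators} applies verbatim. Writing a generic point of $X$ as $(u,v)$ and reading $K_w=K_{u,v}$, the definition~\eqref{eq:KT_four_arguments_via_TK} is exactly~\eqref{eq:integral_kernel_associated_to_operator_in_RKHS}, so~\eqref{eq:initial_integral_form_of_bouned_invariant_operator} is merely~\eqref{eq:integral_representation_of_bounded_operator} rewritten in two-coordinate notation, and the two membership assertions in~\eqref{eq:KT_1_2_belongs_to_H} are the specializations of~\eqref{eq:KT1_in_H} to $w=(0,v)$ and of~\eqref{eq:KT2_in_H} to $z=(0,y)$. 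No additional work is needed for these.

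The substantive step is~\eqref{eq:KT_invariant_property}. The key lemma I would establish first is the action of the translations on the reproducing kernel, namely $\rho(a)K_{u,v}=K_{u+a,v}$ for all $a,u\in G$ and $v\in Y$. I would prove it by testing against an arbitrary $f\in H$, using that $\rho(a)$ is unitary (so $\rho(a)^\ast=\rho(-a)$), the reproducing property, and the definition~\eqref{eq:rho_H(a)}:
\[
\langle f,\rho(a)K_{u,v}\rangle
=\langle \rho(-a)f,K_{u,v}\rangle
=(\rho(-a)f)(u,v)
=f(u+a,v)
=\langle f,K_{u+a,v}\rangle .
\]
Since $f$ is arbitrary, $\rho(a)K_{u,v}=K_{u+a,v}$; in particular $K_{u,v}=\rho(u)K_{0,v}$. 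Now I would invoke the defining relation $T\rho(u)=\rho(u)T$ from~\eqref{eq:centralizer_rho_H} to get $TK_{u,v}=\rho(u)TK_{0,v}$, and then evaluate at $(x,y)$ using~\eqref{eq:rho_H(a)}:
\[
(TK_{u,v})(x,y)
=(\rho(u)TK_{0,v})(x,y)
=(TK_{0,v})(x-u,y)
=K_T(x-u,y,0,v),
\]
which is precisely~\eqref{eq:KT_invariant_property}.

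The only place that demands care is the bookkeeping of signs in the kernel identity $\rho(a)K_{u,v}=K_{u+a,v}$, specifically the passage through $\rho(a)^\ast=\rho(-a)$ and $(\rho(-a)f)(u,v)=f(u+a,v)$. Once that identity is in hand, both the reduction to Proposition~\ref{prop:bounded_operators_as_integral_operators} and the commutation argument are routine, and no measure-theoretic subtleties arise because every manipulation is performed pointwise within the RKHS.
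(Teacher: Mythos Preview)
Your proof is correct and follows essentially the same approach as the paper: reduce~\eqref{eq:initial_integral_form_of_bouned_invariant_operator} and~\eqref{eq:KT_1_2_belongs_to_H} to Proposition~\ref{prop:bounded_operators_as_integral_operators}, then derive~\eqref{eq:KT_invariant_property} from $K_{u,v}=\rho(u)K_{0,v}$ and the commutation $T\rho(u)=\rho(u)T$. The only cosmetic difference is that the paper obtains $\rho(a)K_{0,v}=K_{a,v}$ directly from the assumed kernel invariance~\eqref{eq:K_is_invariant}, whereas you rederive the (slightly more general) identity $\rho(a)K_{u,v}=K_{u+a,v}$ via the reproducing property and unitarity; both arguments are short and equivalent.
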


\begin{proof}
\eqref{eq:initial_integral_form_of_bouned_invariant_operator}
and~\eqref{eq:KT_1_2_belongs_to_H}
follow directly from Proposition~\ref{prop:bounded_operators_as_integral_operators}.
By~\eqref{eq:K_is_invariant},
\[
\rho(a) K_{0,b} = K_{a,b}
\qquad(a\in G,\ b\in Y).
\]
The assumption $T\in\cC(\rho)$ implies that
$\rho(u) T = T \rho(u)$.
So,
\begin{align*}
K_T(x,y,u,v)
&= (T K_{u,v})(x,y)
= (T \rho(u) K_{0,v}(x,y))
\\
&= (\rho(u) T K_{0,v})(x,y)
= (T K_{0,v})(x-u, y)
= K_T(x-u, y, 0, v).
\qedhere
\end{align*}
\end{proof}

\begin{defn}
\label{def:phi_b}
Given $b$ in $L^\infty(\Om)$, we define
$\phi_b\colon G\times Y\times Y \to \bC$ by
\begin{equation}
\label{eq:integral_kernel_from_spectral_function}
\phi_b(x,y,v)
\eqdef
\int_{\Om} \xi(x)\, L_{\xi,v}(y)\, b(\xi)\, \dif\hnu(\xi).
\end{equation}
\end{defn}

Equivalently, due to our assumption~\eqref{eq:L_decomposition},
\begin{equation}
\label{eq:integral_kernel_from_spectral_function_via_q}
\phi_b(x,y,v)
=
\int_{\Om} \xi(x)\, q_\xi(y)\, \conj{q_\xi(v)}\, b(\xi) \dif\hnu(\xi).
\end{equation}

\begin{rem}
\label{rem:integral_in_def_phi_exists_in_Lebesgue_sense}
The integral in~\eqref{eq:integral_kernel_from_spectral_function}
exists in the Lebesgue sense.
Since $L_{\xi,\bullet}(\cdot)$ is the reproducing kernel of $\hH_\xi$,
it satisfies the Schwarz inequality:
\begin{equation}
\label{eq:Schwarz_inequality_for_L}
|L_{\xi,v}(y)|
\le \sqrt{L_{\xi,v}(v)}\,\sqrt{L_{\xi,y}(y)}.
\end{equation}
Applying~\eqref{eq:Schwarz_inequality_for_L},
the usual Schwarz (or H\"{o}lder) inequality,
and~\eqref{eq:norm_q_with_fixed_y}, we get
\begin{align*}
\int_\Om |\xi(x)\,L_{\xi,v}(y)\,b(\xi)|\,\dif\hnu(\xi)
&\le
\|b\|_\infty
\left(\int_\Om L_{\xi,y}(y)\,\dif\hnu(\xi)\right)^{1/2}
\left(\int_\Om L_{\xi,v}(v)\,\dif\hnu(\xi)\right)^{1/2}
\\[0.5ex]
&=
\|b\|_\infty\, \|K_{0,y}\|\,\|K_{0,v}\|.
\end{align*}
\end{rem}

\begin{lem}
\label{lem:VK}
Let $b\in L^\infty(\Om)$.
Then, 
for every $u$ in $G$
and every $v,y$ in $Y$,
\begin{equation}
\label{eq:phi_via_VK}
(V_b K_{0,y})(u,v)
=\phi_b(u,v,y).
\end{equation}
Moreover,
for every $u,x$ in $G$
and every $v,y$ in $Y$,
\begin{equation}
\label{eq:V_b_K}
(V_b K_{x,y})(u,v)
=\phi_b(u-x,v,y)
\end{equation}
and
\begin{equation}
\label{eq:V_conjb_K}
(V_{\conj{b}}K_{x,y})(u,v)
=\conj{\phi_b(x-u,y,v)}.
\end{equation}
\end{lem}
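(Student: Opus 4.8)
The plan is to establish the three identities in the order stated, deriving the second from the first by translation invariance and the third from the second by conjugation, so that the only real computation happens in the first identity.

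For \eqref{eq:phi_via_VK}, the key idea is to avoid the pointwise action of $R^\ast$ altogether and instead combine the reproducing property with the unitarity of $R$. Since $V_b K_{0,y}\in H$, the reproducing property gives $(V_b K_{0,y})(u,v)=\langle V_b K_{0,y}, K_{u,v}\rangle_H$. Because $R$ is an isometric isomorphism we have $R R^\ast=I$, hence $R V_b = R R^\ast M_b R = M_b R$; transferring the inner product to $L^2(\Om)$ then yields $(V_b K_{0,y})(u,v)=\langle M_b R K_{0,y},\,R K_{u,v}\rangle_{L^2(\Om)}$. I would now substitute the explicit formula $(R K_{x,y})(\xi)=\conj{\xi(x)q_\xi(y)}$ from Proposition~\ref{prop:RK}, so that $M_b R K_{0,y}$ is the function $\xi\mapsto b(\xi)\conj{q_\xi(y)}$ while $R K_{u,v}$ is $\xi\mapsto\conj{\xi(u)q_\xi(v)}$. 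Writing out the $L^2(\Om)$ inner product and conjugating the second factor produces $\int_{\Om}\xi(u)\,q_\xi(v)\,\conj{q_\xi(y)}\,b(\xi)\,\dif\hnu(\xi)$, which is exactly $\phi_b(u,v,y)$ by \eqref{eq:integral_kernel_from_spectral_function_via_q}. The absolute convergence required to treat this as a genuine Lebesgue integral is already supplied by Remark~\ref{rem:integral_in_def_phi_exists_in_Lebesgue_sense}.

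For \eqref{eq:V_b_K}, I would not recompute anything but instead invoke translation invariance. By \eqref{eq:K_is_invariant} we have $K_{x,y}=\rho(x)K_{0,y}$, and since $V_b\in\cC(\rho)$ it commutes with $\rho(x)$, so $V_b K_{x,y}=\rho(x)V_b K_{0,y}$. Evaluating at $(u,v)$ and using the definition~\eqref{eq:rho_H(a)} of $\rho$ gives $(V_b K_{x,y})(u,v)=(V_b K_{0,y})(u-x,v)=\phi_b(u-x,v,y)$, which is the claim.

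For \eqref{eq:V_conjb_K}, I would apply the already-proved identity \eqref{eq:V_b_K} with $\conj{b}$ in place of $b$, obtaining $(V_{\conj{b}}K_{x,y})(u,v)=\phi_{\conj{b}}(u-x,v,y)$, and then reconcile this with $\conj{\phi_b(x-u,y,v)}$ by a direct manipulation of the defining integral~\eqref{eq:integral_kernel_from_spectral_function_via_q}. Conjugating that formula and using the character identity $\conj{\xi(x-u)}=\xi(u-x)$ turns $\conj{\phi_b(x-u,y,v)}$ into $\int_{\Om}\xi(u-x)\,q_\xi(v)\,\conj{q_\xi(y)}\,\conj{b(\xi)}\,\dif\hnu(\xi)$, which is precisely $\phi_{\conj{b}}(u-x,v,y)$. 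There is no serious analytic obstacle here: once the identity $R V_b=M_b R$ is secured and integrability is granted by Remark~\ref{rem:integral_in_def_phi_exists_in_Lebesgue_sense}, the only thing demanding care is the bookkeeping of the character conjugations $\conj{\xi(a)}=\xi(-a)$ and the order in which the two $Y$-arguments of $\phi_b$ appear.
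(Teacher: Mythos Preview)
Your proof is correct but takes a genuinely different route from the paper's. The paper computes $V_b K_{x,y}$ directly by decomposing $R^\ast=\FourierH^\ast N^\ast$ and applying the operators in succession: it first obtains $(N^\ast M_b R K_{x,y})(\xi,v)=\overline{\xi(x)}\,b(\xi)\,L_{\xi,y}(v)$ from Proposition~\ref{prop:RK}, and then applies $\FourierH^\ast$ pointwise as an inverse Fourier integral to arrive at $\phi_b(u-x,v,y)$, thus proving~\eqref{eq:V_b_K} for general $x$ in one shot (with~\eqref{eq:phi_via_VK} as the special case $x=0$); identity~\eqref{eq:V_conjb_K} is then obtained by an analogous direct computation. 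You instead bypass the pointwise action of $R^\ast$ entirely by using the reproducing property and the unitarity of $R$ to rewrite $(V_b K_{0,y})(u,v)$ as an $L^2(\Om)$ inner product, which is arguably cleaner since it avoids having to justify the pointwise Fourier inversion. You also organize the argument differently, proving~\eqref{eq:phi_via_VK} first and then deducing~\eqref{eq:V_b_K} from the commutation $V_b\in\cC(\rho)$ and $K_{x,y}=\rho(x)K_{0,y}$, whereas the paper does not invoke $V_b\in\cC(\rho)$ here at all. Both approaches are short; yours leans on structural facts (reproducing property, unitarity, translation invariance), while the paper's is a straight computation through the factorization of $R^\ast$.
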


\begin{proof}
Recall that
\[
V_b
=R^\ast M_b R
=\FourierH^\ast N^\ast M_b R.
\]
First, we compute
$N^\ast M_b R K_{x,y}$
using
Proposition~\ref{prop:RK}:
\[
(N^\ast M_b R K_{x,y})(\xi,v)
=
q_\xi(v) b(\xi)
(R K_{x,y})(\xi,v)
=
\conj{\xi(x)} 
b(\xi)
q_\xi(v)\,\conj{q_\xi(y)}
=
\conj{\xi(x)}
b(\xi)
L_{\xi,y}(v).
\]
Now, we apply
$\FourierH^\ast$
and compare the obtained expression
with the definition of $\phi_b$:
\begin{align*}
(V_b K_{x,y})(u,v)
&=
\int_{\hG}
\xi(u)
\conj{\xi(x)}
b(\xi)
L_{\xi,y}(v)\,
\dif\hnu(\xi)
\\[1ex]
&=
\int_{\hG}
\xi(u-x)
b(\xi)
L_{\xi,y}(v)\,
\dif\hnu(\xi)
= \phi_b(u-x,v,y).
\end{align*}
Thereby, we have proved~\eqref{eq:V_b_K} and its particular case~\eqref{eq:phi_via_VK}.
Similarly,
\begin{align*}
(V_{\conj{b}}K_{x,y})(u,v)
&=
\int_{\hG}
\xi(u-x)
\conj{b(\xi)}
L_{\xi,y}(v)\,
\dif\hnu(\xi)
\\[1ex]
&=
\conj{\int_{\hG} \xi(x-u) b(\xi) L_{\xi,v}(y)\,\dif\hnu(\xi)}
= \conj{\phi_b(x-u,y,v)}.
\qedhere
\end{align*}
\end{proof}

\begin{thm}
\label{thm:V_integral_representation}
Let $b\in L^\infty(\Om)$.
Then for every $f$ in $H$,
$x$ in $G$ and $y$ in $Y$,
\begin{equation}
\label{eq:V_integral_representation}
(V_b f)(x,y)
=
\int_{G\times Y}
f(u,v) \phi_b(x-u,y,v)
\,\dif\nu(u)\,\dif\la(v).
\end{equation}
\end{thm}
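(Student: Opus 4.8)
The plan is to recognize that $V_b$ is a bounded linear operator on $H$, so that the general integral representation already established applies, and then to identify its integral kernel with $\phi_b$ via Lemma~\ref{lem:VK}.

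First, since $V_b=R^\ast M_b R$ with $R$ an isometric isomorphism and $M_b$ bounded (because $b\in L^\infty(\Om)$), we have $V_b\in\cB(H)$; in fact $V_b\in\cC(\rho)$. Hence Proposition~\ref{prop:bounded_operators_as_integral_operators}, specialized as in Proposition~\ref{prop:initial_integral_form_of_bouned_invariant_operator} with $T=V_b$, gives
\[
(V_b f)(x,y)
=\int_{G\times Y} K_{V_b}(x,y,u,v)\,f(u,v)\,\dif\nu(u)\,\dif\la(v),
\]
where the integral kernel is $K_{V_b}(x,y,u,v)=(V_b K_{u,v})(x,y)$ by~\eqref{eq:KT_four_arguments_via_TK}.

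Second, I would compute this kernel explicitly. Equation~\eqref{eq:V_b_K} of Lemma~\ref{lem:VK} states that $(V_b K_{x,y})(u,v)=\phi_b(u-x,v,y)$. Relabelling the variables — using the subscript $(u,v)$ in place of $(x,y)$ and the evaluation point $(x,y)$ in place of $(u,v)$ — yields
\[
K_{V_b}(x,y,u,v)=(V_b K_{u,v})(x,y)=\phi_b(x-u,y,v).
\]
Substituting this expression into the integral representation above produces exactly~\eqref{eq:V_integral_representation}.

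The argument is essentially immediate once Lemma~\ref{lem:VK} is in hand; there is no serious obstacle, only the bookkeeping of the variable interchange $(x,y)\leftrightarrow(u,v)$ when passing from~\eqref{eq:V_b_K} to the kernel $K_{V_b}$. As a consistency check, one should verify that the dependence of $\phi_b(x-u,y,v)$ on the difference $x-u$ matches the horizontal-invariance property~\eqref{eq:KT_invariant_property} of the kernel of any operator in $\cC(\rho)$, which it does.
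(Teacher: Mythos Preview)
Your proof is correct and follows essentially the same approach as the paper: apply Proposition~\ref{prop:initial_integral_form_of_bouned_invariant_operator} to $T=V_b$ and identify the kernel via Lemma~\ref{lem:VK}. The only cosmetic difference is that the paper uses the special case~\eqref{eq:phi_via_VK} together with the invariance~\eqref{eq:KT_invariant_property}, whereas you invoke the general formula~\eqref{eq:V_b_K} directly and relabel variables; both routes amount to the same computation.
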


\begin{proof}
By~\eqref{eq:centralizer_via_R},
we know that $V_b\in\cC(\rho)$.
We apply Proposition~\ref{prop:initial_integral_form_of_bouned_invariant_operator}
to $T\eqdef V_b$.
In this case,
by~\eqref{eq:phi_via_VK},
\[
K_T(x,y,0,v)
=(T K_{0,v})(x,y)
=(V_b K_{0,v})(x,y)
=\phi_b(x,y,v).
\]
Finally, by~\eqref{eq:KT_invariant_property},
\[
K_T(x,y,u,v)
=K_T(x-u,y,0,v)
=\phi_b(x-u,y,v).
\]
So, \eqref{eq:initial_integral_form_of_bouned_invariant_operator} transforms to~\eqref{eq:V_integral_representation}.
\end{proof}

\section{The class of translation-invariant integral kernels}
\label{sec:cA}

The following definition is motivated by Proposition~\ref{prop:initial_integral_form_of_bouned_invariant_operator}.

\begin{defn}
Let $\cA_0$ be the set of all functions
$\psi\colon G\times Y\times Y \to \bC$
satisfying the following two conditions:
\begin{align}
\label{eq:psi_cond_1}
\psi(\cdot,\cdot,v)\in H
\qquad(v\in H),
\\[1ex]
\label{eq:psi_cond_2}
\conj{\psi(-\cdot,y,\cdot)}\in H
\qquad (y\in Y).
\end{align}
\end{defn}

Since $H$ is translation invariant,
the following properties hold for every $\psi$ in $\cK_0$:
\begin{align}
\label{eq:psi_daughters1_belong_to_H}
&\psi(\cdot-u,\cdot,v)\in H
\qquad (u\in G,\ v\in Y),
\\[1ex]
\label{eq:psi_daughters2_belong_to_H}
&\conj{\psi(x-\cdot,y,\cdot)} \in H
\qquad (x\in G,\ y\in Y).
\end{align}

\begin{defn}
Let
$\cD \eqdef
\linspan
\bigl\{K_{p,q}\colon\ p\in G,\, q\in Y\bigr\}$.
Given $\psi$ in $\cA_0$,
define $\tS_\psi\colon\cD\to H$ by
\begin{equation}
\label{eq:widetilde_S_def}
(\tS_\psi f)(x,y)
\eqdef \int_{G\times Y} f(u,v)\,
\psi(x-u,y,v)\, \dif\nu(u)\, \dif\la(v).
\end{equation}
\end{defn}

The following lemma shows
that $\tS_\psi$
is well defined on $\cD$.

\begin{lem}
\label{lem:tildeS_K_eq_psi}
Let $\psi\in\cA_0$, $p\in G$, and $q\in Y$.
Then $\tS_\psi K_{p,q}\in H$, and
\[
(\tS_\psi K_{p,q})(x,y)
=\psi(x-p,y,q)
\qquad(x\in G,\ y\in Y).
\]
\end{lem}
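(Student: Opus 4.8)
The goal is to show that $\tS_\psi K_{p,q}$ lies in $H$ and evaluates to $\psi(x-p,y,q)$. The plan is to compute $(\tS_\psi K_{p,q})(x,y)$ directly from the definition~\eqref{eq:widetilde_S_def} by substituting $f=K_{p,q}$ and using the reproducing property of the kernel. The integral becomes
\[
(\tS_\psi K_{p,q})(x,y)
=\int_{G\times Y} K_{p,q}(u,v)\,\psi(x-u,y,v)\,\dif\nu(u)\,\dif\la(v).
\]
The key observation is that, with $x$ and $y$ fixed, the function $(u,v)\mapsto \psi(x-u,y,v)$ is exactly $\conj{\conj{\psi(x-\cdot,y,\cdot)}}$, and by property~\eqref{eq:psi_daughters2_belong_to_H} the conjugate $\conj{\psi(x-\cdot,y,\cdot)}$ belongs to $H$. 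This is the crucial structural input: it lets me recognize the integral as an $L^2$-inner product of $K_{p,q}$ against an element of $H$.

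First I would rewrite the integral as an inner product in $L^2(G\times Y)$. Since the inner product on $H$ is inherited from $L^2(G\times Y)$, and since $\conj{\psi(x-\cdot,y,\cdot)}\in H$ by~\eqref{eq:psi_daughters2_belong_to_H}, I can write
\[
\int_{G\times Y} K_{p,q}(u,v)\,\psi(x-u,y,v)\,\dif\nu(u)\,\dif\la(v)
=\bigl\langle K_{p,q},\ \conj{\psi(x-\cdot,y,\cdot)}\bigr\rangle_H.
\]
Then I would invoke the reproducing property, but applied carefully: the reproducing property gives $\langle g, K_{p,q}\rangle = g(p,q)$ for $g\in H$, so I need $\langle K_{p,q}, g\rangle = \conj{g(p,q)}$ with $g=\conj{\psi(x-\cdot,y,\cdot)}$. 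Evaluating $g$ at $(p,q)$ yields $\conj{\psi(x-p,y,q)}$, and conjugating gives $\psi(x-p,y,q)$, which is precisely the claimed value. This simultaneously establishes that $(\tS_\psi K_{p,q})(x,y)=\psi(x-p,y,q)$ as a pointwise formula.

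It remains to verify that $\tS_\psi K_{p,q}\in H$, i.e., that the function $(x,y)\mapsto \psi(x-p,y,q)$ is an element of $H$. This follows immediately from property~\eqref{eq:psi_daughters1_belong_to_H}: with the second and third slots fixed at the translation $p$ and the point $q$, the function $\psi(\cdot-p,\cdot,q)$ is precisely of the form appearing there, hence belongs to $H$. I would state this first, so that the pointwise computation is justified as an identity between genuine elements of $H$.

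The main obstacle I anticipate is the careful bookkeeping of complex conjugates and the direction of the inner product, since the definition of $\tS_\psi$ pairs $f$ against $\psi$ without a conjugate, whereas the $L^2$-inner product carries a conjugate on the second argument. One must match $\psi(x-u,y,v)$ with $\conj{g(u,v)}$ for the appropriate $g\in H$, which forces the use of~\eqref{eq:psi_daughters2_belong_to_H} rather than~\eqref{eq:psi_daughters1_belong_to_H} at the inner-product step. Beyond this, the argument is a direct application of the reproducing property, and the integrability needed to interpret the integral as an inner product is guaranteed because both $K_{p,q}$ and $\conj{\psi(x-\cdot,y,\cdot)}$ lie in $H\subseteq L^2(G\times Y)$.
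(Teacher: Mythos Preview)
Your proposal is correct and follows essentially the same approach as the paper: define $g=\conj{\psi(x-\cdot,y,\cdot)}\in H$ via~\eqref{eq:psi_daughters2_belong_to_H}, recognize the integral as $\langle K_{p,q},g\rangle=\conj{g(p,q)}=\psi(x-p,y,q)$, and then invoke~\eqref{eq:psi_daughters1_belong_to_H} to conclude $\psi(\cdot-p,\cdot,q)\in H$. The only cosmetic difference is that the paper introduces the auxiliary function $g$ explicitly and verifies membership in $H$ at the end rather than the beginning.
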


\begin{proof}
Let $x\in G$ and $y\in Y$.
Define $g\colon G\times Y\to\bC$ by
\[
g(u,v) \eqdef \conj{\psi(x-u,y,v)}.
\]
By~\eqref{eq:psi_daughters2_belong_to_H},
$g\in H$.
So, by the reproducing property,
\begin{align*}
	(\tS_\psi K_{p,q})(x,y) 
	&=
    \int_{G\times Y} K_{p,q}(u,v)\,
    \psi(x-u,y,v)\, \dif\nu(u)\, \dif\la(v)
    \\
    &=
    \langle K_{p,q}, g\rangle
    =
    \conj{\langle g, K_{p,q}\rangle}
    =
    \conj{g(p,q)}
    =
    \psi(x-p,y,q).
\end{align*}
As $\psi \in \cA_0$,
it follows by~\eqref{eq:psi_daughters1_belong_to_H} 
that
$(\tS_\psi K_{p,q})(\cdot,\cdot)
=\psi(\cdot - p, \cdot,q) \in H$.
\end{proof}

\begin{rem}
As we see in the proof of Lemma~\ref{lem:tildeS_K_eq_psi},
condition~\eqref{eq:psi_cond_2}
allows us to write $\tS_\psi f$ as an inner product in $H$,
and
condition~\eqref{eq:psi_cond_1}
assures that $\tS_\psi f\in H$ for $f$ in $\cD$.
\end{rem}

\begin{rem}
\label{rem:def_ttS}
If $\psi\in\cA_0$,
then, by~\eqref{eq:psi_daughters2_belong_to_H},
for every $f$ in $H$,
$x\in G$, and $y$ in $Y$,
the integral in the right-hand side of~\eqref{eq:widetilde_S_def}
exists in the Lebesgue sense
and can be written as an inner product:
\begin{equation}
\label{eq:def_S_via_inner_product}
\int_{G\times Y}
f(u,v)\,
\psi(x-u,y,v)\, \dif\nu(u)\, \dif\la(v)
=
\langle f, \conj{\psi(x - \cdot, y, \cdot)} \rangle.
\end{equation}
The obtained expression depends on $x,y$,
and we denote it by
$(\ttS_\psi f)(x,y)$.
In other words, we have defined a linear operator
$\ttS_\psi\colon H\to\bC^X$.
In general, the condition $\psi\in\cA_0$
is not sufficient to guarantee that
all values of this operator belong to $H$.
\end{rem}

\begin{defn}[integral operators of a special form]
We denote by $\cA$ the set of all functions
$\psi$ belonging to $\cA_0$
such that the following integral operator
$S_\psi\colon H\to H$ is well-defined and bounded:
\begin{equation}
\label{eq:S_def}
	(S_\psi f)(x,y)
     \eqdef
     \int_{G\times Y} f(u,v)\, \psi(x-u,y,v)\, \dif\nu(u)\, \dif\la(v) \qquad (f\in H).
\end{equation}
\end{defn}

\begin{rem}
\label{rem:def_cA_detailed_form}
More explicitly,
the condition $\psi\in\cA$ means
the following system of three conditions:
$\psi\in\cA_0$,
\begin{equation}
\label{eq:Sf_belongs_to_H}
\forall f\in H\qquad \ttS_\psi f\in H,
\end{equation}
and
\begin{equation}
\sup_{\substack{f\in H\\\|f\|=1}}
\|\ttS_\psi f\|
< +\infty.
\end{equation}
\end{rem}

\begin{prop}
\label{prop:cA_is_cA0_with_boundedness}
$\psi\in\cA$ if and only if
$\psi\in\cA_0$
and the linear operator $\tS_\psi$ is bounded on $\cD$:
\begin{equation}
\label{eq:sup_norm_over_cD}
\sup_{\substack{f\in\cD\\\|f\|=1}}
\|\tS_\psi f\| < +\infty.
\end{equation}
\end{prop}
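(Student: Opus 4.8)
The plan is to prove the two implications separately; the forward one is essentially immediate, while the reverse one rests on a density argument combined with the reproducing property. For the forward direction, suppose $\psi\in\cA$. By definition $\psi\in\cA_0$, and $S_\psi$ is bounded on $H$. For $f\in\cD\subseteq H$ the two operators agree: both $\tS_\psi f$ and $S_\psi f$ are given by the integral in~\eqref{eq:widetilde_S_def}, which by Lemma~\ref{lem:tildeS_K_eq_psi} lands in $H$. Hence $\tS_\psi=S_\psi|_\cD$, and the restriction of a bounded operator to $\cD$ is bounded, so~\eqref{eq:sup_norm_over_cD} holds with supremum at most $\|S_\psi\|$.

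For the reverse direction, suppose $\psi\in\cA_0$ and that~\eqref{eq:sup_norm_over_cD} holds. First I would record that $\cD$ is dense in $H$: if $f\in H$ is orthogonal to every $K_{p,q}$, then $f(p,q)=\langle f,K_{p,q}\rangle=0$ for all $(p,q)$, whence $f=0$ and $\cD^\perp=\{0\}$. Since $\tS_\psi$ is bounded on the dense subspace $\cD$, it admits a unique bounded linear extension $T\colon H\to H$, whose operator norm equals the supremum in~\eqref{eq:sup_norm_over_cD}.

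The crucial step is to identify $T$ with the explicit operator $\ttS_\psi$ of Remark~\ref{rem:def_ttS}, which shows simultaneously that $\ttS_\psi f\in H$ for every $f$ and that $\ttS_\psi$ is bounded. Fix $f\in H$ and choose $f_n\in\cD$ with $f_n\to f$ in $H$ (possible since $\cD$ is dense and $H$ is a metric space), and fix $(x,y)\in G\times Y$. Continuity of $T$ together with the reproducing property gives $|(Tf_n-Tf)(x,y)|\le\|Tf_n-Tf\|\,\|K_{x,y}\|\to 0$, so $(Tf_n)(x,y)\to(Tf)(x,y)$. On the other hand, for $f_n\in\cD$ we have $(Tf_n)(x,y)=(\tS_\psi f_n)(x,y)=\langle f_n,\conj{\psi(x-\cdot,y,\cdot)}\rangle$ by~\eqref{eq:def_S_via_inner_product}; since $\conj{\psi(x-\cdot,y,\cdot)}\in H$ by~\eqref{eq:psi_daughters2_belong_to_H}, continuity of the inner product yields $(Tf_n)(x,y)\to\langle f,\conj{\psi(x-\cdot,y,\cdot)}\rangle=(\ttS_\psi f)(x,y)$. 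Comparing the two limits gives $(Tf)(x,y)=(\ttS_\psi f)(x,y)$ for every $(x,y)$, hence $\ttS_\psi f=Tf\in H$. This establishes~\eqref{eq:Sf_belongs_to_H}, and boundedness follows from $\|\ttS_\psi f\|=\|Tf\|\le\|T\|\,\|f\|$; by Remark~\ref{rem:def_cA_detailed_form}, $\psi\in\cA$.

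The main obstacle is precisely this identification: the extension $T$ is a priori only an abstract Banach-space limit, and one must verify that it agrees pointwise with the concrete integral/inner-product formula defining $\ttS_\psi$. This is where the RKHS hypothesis is indispensable—continuity of point evaluations converts the norm convergence $Tf_n\to Tf$ into pointwise convergence, while condition~\eqref{eq:psi_cond_2} (equivalently~\eqref{eq:psi_daughters2_belong_to_H}) makes the inner-product representation of $(\ttS_\psi f)(x,y)$ both valid and continuous in $f$.
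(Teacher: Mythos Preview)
Your proof is correct and follows essentially the same route as the paper's: both extend the bounded $\tS_\psi$ from the dense subspace $\cD$ to a bounded $T\in\cB(H)$ and then identify $T$ with $\ttS_\psi$ via the inner-product representation~\eqref{eq:def_S_via_inner_product}, using that $\conj{\psi(x-\cdot,y,\cdot)}\in H$. The only cosmetic differences are that you spell out the density of $\cD$ and phrase the identification via sequences, whereas the paper appeals directly to the continuity of the two linear functionals $f\mapsto\langle Tf,K_{x,y}\rangle$ and $f\mapsto\langle f,\conj{\psi(x-\cdot,y,\cdot)}\rangle$.
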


\begin{proof}
The $\Rightarrow$ part if clear,
by Remark~\ref{rem:def_cA_detailed_form}.
To prove the $\Leftarrow$ part,
we suppose that $\psi\in\cA_0$ and $\tS_\psi$ is bounded on $\cD$.
It is well known
(see Kreyszig~\cite[Theorem~2.7-11]{Kreyszig1978})
that every densely defined bounded linear operator,
acting from a normed space to a Banach space,
admits a unique bounded extension defined on the whole space.
Thus, there exists a unique $T$ in $\cB(H)$ such that
$Tf=\tS_\psi f$ for every $f$ in $\cD$.
Given $(x,y)$ in $G\times Y$,
we define $h_{x,y}\colon G\times Y\to\bC$ by
$h_{x,y}(u,v)\eqdef \conj{\psi(x-u,y,v)}$.
Using~\eqref{eq:def_S_via_inner_product},
we rewrite the equality $(Tf)(x,y)=(\widetilde{S}_\psi f)(x,y)$
in the form
\[
\langle Tf, K_{x,y}\rangle
= \langle f, h_{x,y}\rangle
\qquad(f\in\cD,\ x\in G,\ y\in Y).
\]
Since both sides 
of this equality depend continuously on $f$
and $\cD$ is dense in $H$,
the equality extends to all $f$ in $H$:
\[
(T f)(x,y)
= \langle Tf, K_{x,y}\rangle
= \langle f,h_{x,y}\rangle
= (\ttS_{\psi} f)(x,y)
\qquad(f\in H,\ x\in G,\ y\in Y).
\]
We conclude that $Tf=\widetilde{\widetilde{S}}_{\psi} f$
for every $f$ in $H$.
By Remark~\ref{rem:def_cA_detailed_form},
this means that $S_\psi$ is a well-defined bounded linear operator,
and $\psi\in\cA$.
\end{proof}

In general, $\cA$ can be a proper subset of $\cA_0$;
see Remark~\ref{rem:A0_is_not_A}.

Now, we can rewrite Theorem~\ref{thm:V_integral_representation}
in the following short form.

\begin{cor}
\label{cor:Vb_eq_Sphib}
Let $b\in L^\infty(\Om)$.
Then, $\phi_b\in\cA$ and $V_b=S_{\phi_b}$.
\end{cor}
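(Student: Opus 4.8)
The plan is to show that $\phi_b\in\cA$ and that $V_b=S_{\phi_b}$, using the machinery already assembled. The key observation is that Theorem~\ref{thm:V_integral_representation} already gives the integral formula~\eqref{eq:V_integral_representation} with kernel $\phi_b$, so the content of the corollary is essentially a matter of recognizing that this formula matches the definition of $S_{\phi_b}$ and verifying that $\phi_b$ satisfies the membership conditions for $\cA$. Concretely, I would first invoke Corollary~\ref{cor:Vb_eq_Sphib}'s hypothesis $b\in L^\infty(\Om)$ together with~\eqref{eq:centralizer_via_R} to record that $V_b\in\cC(\rho)$, which is a bounded operator on $H$.

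First I would verify that $\phi_b\in\cA_0$, i.e., that conditions~\eqref{eq:psi_cond_1} and~\eqref{eq:psi_cond_2} hold. For~\eqref{eq:psi_cond_1}, I would use Lemma~\ref{lem:VK}: by~\eqref{eq:phi_via_VK} we have $\phi_b(u,v,y)=(V_b K_{0,y})(u,v)$, so $\phi_b(\cdot,\cdot,y)=V_b K_{0,y}\in H$ since $K_{0,y}\in H$ and $V_b$ maps $H$ to $H$; this is exactly~\eqref{eq:psi_cond_1}. For~\eqref{eq:psi_cond_2}, I would use~\eqref{eq:V_conjb_K}, which states $(V_{\conj b}K_{x,y})(u,v)=\conj{\phi_b(x-u,y,v)}$; setting $x=0$ gives $\conj{\phi_b(-u,y,v)}=(V_{\conj b}K_{0,y})(u,v)$, so $\conj{\phi_b(-\cdot,y,\cdot)}=V_{\conj b}K_{0,y}\in H$, which is precisely~\eqref{eq:psi_cond_2}. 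Thus $\phi_b\in\cA_0$.

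Next I would establish $\phi_b\in\cA$, using the characterization in Proposition~\ref{prop:cA_is_cA0_with_boundedness}: it suffices to check that $\tS_{\phi_b}$ is bounded on $\cD$. The cleanest route is to observe that, on generators $K_{p,q}\in\cD$, Lemma~\ref{lem:tildeS_K_eq_psi} gives $(\tS_{\phi_b}K_{p,q})(x,y)=\phi_b(x-p,y,q)$, while~\eqref{eq:V_b_K} gives $(V_b K_{p,q})(u,v)=\phi_b(u-p,v,q)$; comparing the two (with the roles of the second and third arguments matched appropriately) shows $\tS_{\phi_b}$ and $V_b$ agree on each $K_{p,q}$, hence on all of $\cD$ by linearity. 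Since $V_b$ is bounded, the boundedness~\eqref{eq:sup_norm_over_cD} of $\tS_{\phi_b}$ on $\cD$ follows immediately, so Proposition~\ref{prop:cA_is_cA0_with_boundedness} yields $\phi_b\in\cA$.

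Finally, to conclude $V_b=S_{\phi_b}$, I would note that both are bounded operators on $H$ that agree on the dense subspace $\cD$: the agreement on $\cD$ was just established, and $S_{\phi_b}$ is the bounded extension of $\tS_{\phi_b}$ guaranteed by the definition of $\cA$, while $V_b$ extends the same restriction, so they coincide everywhere. Alternatively, and perhaps more directly, I would simply match the integral in~\eqref{eq:V_integral_representation} against the defining formula~\eqref{eq:S_def} for $S_{\phi_b}$, noting they are identical. The main obstacle, such as it is, lies in bookkeeping the argument order in $\phi_b(x-u,y,v)$ versus $\phi_b(u-x,v,y)$ across Lemma~\ref{lem:VK} and Lemma~\ref{lem:tildeS_K_eq_psi}; I would take care that the comparison on $\cD$ uses consistent conventions so that the identification $V_b=S_{\phi_b}$ is unambiguous.
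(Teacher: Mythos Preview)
Your proposal is correct and tracks the paper's proof closely: the verification of $\phi_b\in\cA_0$ via~\eqref{eq:phi_via_VK} and~\eqref{eq:V_conjb_K} is identical to the paper's. For the second half, the paper takes exactly the ``alternative'' route you mention at the end---it invokes Theorem~\ref{thm:V_integral_representation} to get $\ttS_{\phi_b}f=V_bf$ for every $f\in H$ and then reads off $\phi_b\in\cA$ and $S_{\phi_b}=V_b$ directly from Remark~\ref{rem:def_cA_detailed_form}, rather than passing through $\cD$ and Proposition~\ref{prop:cA_is_cA0_with_boundedness}; your primary route via agreement on generators is a valid but slightly longer detour.
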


\begin{proof}
1. Let us verify that $\phi_b\in\cA_0$.
Since $V_b$ and $V_{\conj{b}}$
are bounded operators acting in $H$,
by~\eqref{eq:phi_via_VK} we get
\[
\phi_b(\cdot, \cdot, y)
= V_b K_{0,y} \in H.
\]
Similarly, by~\eqref{eq:V_conjb_K},
\[
\overline{\phi_b(x-\cdot, y, \cdot)}
= V_{\conj{b}} K_{x,y}
\in H.
\]
2. By~\eqref{eq:V_integral_representation},
we get
$\ttS_{\phi_b} f = V_b f$
for each $f$ in $H$.
Since $V_b\in\cB(H)$,
we conclude that $\phi_b\in\cA$
and $S_{\phi_b}=V_b$.
\end{proof}

\begin{lem}
\label{lem:S_psi_in_centralizer}
Let $\psi\in\cA$.
Then,
$S_\psi \in \cC(\rho)$.
\end{lem}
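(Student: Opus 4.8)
The goal is to show that $S_\psi$ commutes with every $\rho(a)$, i.e., $S_\psi \rho(a) = \rho(a) S_\psi$ on $H$. The plan is to prove this by a direct computation using the integral representation~\eqref{eq:S_def}, exploiting the fact that the kernel $\psi(x-u,y,v)$ depends on the $G$-variables $x,u$ only through the difference $x-u$. This translation-invariance of the kernel is precisely what should produce the commutation with the horizontal translations.

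First I would fix $a\in G$ and $f\in H$, and compute $(S_\psi \rho(a) f)(x,y)$ by substituting $\rho(a)f$ into~\eqref{eq:S_def}:
\begin{align*}
(S_\psi \rho(a) f)(x,y)
&= \int_{G\times Y} (\rho(a)f)(u,v)\,\psi(x-u,y,v)\,\dif\nu(u)\,\dif\la(v)
\\
&= \int_{G\times Y} f(u-a,v)\,\psi(x-u,y,v)\,\dif\nu(u)\,\dif\la(v).
\end{align*}
Then I would change variables $t\eqdef u-a$ in the inner $G$-integral, using the translation-invariance of the Haar measure $\nu$ (so $\dif\nu(u)=\dif\nu(t)$), which turns $x-u$ into $x-a-t$:
\[
(S_\psi \rho(a) f)(x,y)
= \int_{G\times Y} f(t,v)\,\psi((x-a)-t,y,v)\,\dif\nu(t)\,\dif\la(v)
= (S_\psi f)(x-a,y).
\]
On the other hand, by the definition~\eqref{eq:rho_H(a)} of $\rho$, we have $(\rho(a) S_\psi f)(x,y) = (S_\psi f)(x-a,y)$. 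Comparing the two expressions gives $S_\psi \rho(a) f = \rho(a) S_\psi f$ for all $f\in H$ and all $a\in G$, which is exactly the statement $S_\psi\in\cC(\rho)$.

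I expect the only genuine point requiring care to be the justification of the change of variables, and implicitly that all quantities are well-defined: since $\psi\in\cA$, the operator $S_\psi$ is by definition a well-defined bounded operator on $H$, so for each $f$ the integral in~\eqref{eq:S_def} exists in the Lebesgue sense (indeed, as noted in Remark~\ref{rem:def_ttS}, it equals the inner product $\langle f,\conj{\psi(x-\cdot,y,\cdot)}\rangle$). This absolute convergence is what legitimizes the substitution $t=u-a$ and the invariance $\dif\nu(u)=\dif\nu(t)$ of the Haar measure. No deeper obstacle arises; the result is essentially a formal consequence of the convolution-like structure of the kernel in the $G$-direction, and the membership $S_\psi\rho(a)f,\ \rho(a)S_\psi f\in H$ is guaranteed because $S_\psi$ maps $H$ into $H$ and $\rho(a)$ is unitary on $H$.
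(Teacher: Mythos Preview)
Your proposal is correct and is essentially the same as the paper's proof: both arguments verify the commutation $S_\psi\rho(a)=\rho(a)S_\psi$ by a direct computation with~\eqref{eq:S_def} and a translation change of variables in the Haar measure. The only cosmetic difference is that the paper starts from $(\rho(a)S_\psi f)(x,y)$ and uses the substitution $t=u+a$, whereas you start from $(S_\psi\rho(a)f)(x,y)$ with $t=u-a$; the two are logically identical.
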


\begin{proof}
Let $a\in G$,
$f\in H$, $x\in G$, $y\in Y$.
Then
\begin{align*}
	(\rho(a)S_\psi f)(x,y) 
	&= (S_\psi f)(x-a,y)  
	= \int_{G\times Y}
	f(u,v)\, \psi(x-a-u,y,v)\,
	\dif\nu(u)\,\dif\la(v).
\end{align*}
Using the change of variable
$t \eqdef u+a$
and the invariance of the Haar measure $\nu$,
we get
\begin{align*}
	(\rho(a)S_\psi f)(x,y)  
	&=
    \int_{G\times Y}
	f(t-a,v)\, \psi(x-t,y,v)\,
	\dif\nu(t)\,\dif\la(v)
    \\[0.5ex]
	&=
    \int_{G\times Y}
	(\rho(a)f)(t,v)\, \psi(x-t,y,v)\,
	\dif\nu(t)\,\dif\la(v)
    \\[0.5ex]
	&= (S_\psi \rho(a)f)(x,y).
\end{align*}
Thus, for every $a$ in $G$,
we have proved that $\rho(a)S_\psi=S_\psi\rho(a)$.
\end{proof}

\begin{prop}[injectivity of $\psi\mapsto S_\psi$]
\label{prop:injectivity_psi_to_Spsi}
Let $\psi\in\cA$ and $S_\psi=0$.
Then, $\psi=0$.
\end{prop}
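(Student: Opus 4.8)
The plan is to recover $\psi$ pointwise from the action of $S_\psi$ on the generating kernel functions $K_{0,q}$, so that vanishing of the operator immediately forces vanishing of the function. The whole argument rests on Lemma~\ref{lem:tildeS_K_eq_psi}, which already exhibits $\psi$ as the output of $\tS_\psi$ applied to a reproducing kernel.

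First I would use the construction of $S_\psi$: by Proposition~\ref{prop:cA_is_cA0_with_boundedness}, for $\psi\in\cA$ the operator $S_\psi$ is the unique bounded extension to $H$ of the densely defined operator $\tS_\psi$ on $\cD=\linspan\{K_{p,q}\}$, so $S_\psi$ and $\tS_\psi$ agree on $\cD$. Hence the hypothesis $S_\psi=0$ gives, in particular, $\tS_\psi K_{0,q}=0$ in $H$ for every $q\in Y$. Next I would invoke the reproducing kernel structure to upgrade ``zero in $H$'' to ``zero at every point'': since $H$ is naturally embedded in $L^2(G\times Y)$ with reproducing kernel $(K_{x,y})$, any $h\in H$ satisfies $|h(x,y)|=|\langle h,K_{x,y}\rangle|\le\|h\|\,\|K_{x,y}\|$, so $\|h\|=0$ forces $h(x,y)=0$ for all $(x,y)$. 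Applying this to $h=\tS_\psi K_{0,q}$, which lies in $H$ by Lemma~\ref{lem:tildeS_K_eq_psi}, I obtain $(\tS_\psi K_{0,q})(x,y)=0$ for all $x\in G$ and $y\in Y$.

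Finally, Lemma~\ref{lem:tildeS_K_eq_psi} with $p=0$ yields the identity $(\tS_\psi K_{0,q})(x,y)=\psi(x,y,q)$, whence $\psi(x,y,q)=0$ for all $x\in G$ and all $y,q\in Y$; since $q$ ranges over the entire set $Y$, this says precisely that $\psi\equiv 0$ on $G\times Y\times Y$. I do not expect a genuine obstacle here: the statement is essentially a pointwise read-off of $\psi$ from $S_\psi$, and the only step requiring (minor) care is the transition from vanishing in the Hilbert-space norm to vanishing at every point, which is exactly the reproducing property of $H$ and is the reason the naturally-embedded RKHS hypothesis is used.
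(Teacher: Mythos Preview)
Your proof is correct and follows the same route as the paper: apply Lemma~\ref{lem:tildeS_K_eq_psi} with $p=0$ to read off $\psi(x,y,q)=(\tS_\psi K_{0,q})(x,y)=(S_\psi K_{0,q})(x,y)=0$. The only difference is that you spell out the passage from ``zero in $H$'' to ``zero at every point'' via the reproducing property, which the paper leaves implicit (in an RKHS the zero element \emph{is} the zero function); this extra sentence is harmless but not needed.
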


\begin{proof}
It follows from Lemma~\ref{lem:tildeS_K_eq_psi}:
\[
\psi(x,y,v)
=(\tS_\psi K_{0,v})(x,y)
=(S_\psi K_{0,v})(x,y)
=0.
\qedhere
\]
\end{proof}

\begin{rem}
\label{rem:injectivity2}
Without the assumption~\eqref{eq:psi_cond_2}
included in the definition of $\cA$,
an analog of Proposition~\ref{prop:injectivity_psi_to_Spsi} may fail.
We use notation $\cL^2(G\times Y)$ and $\cZ(G\times Y)$
from Remark~\ref{example:M_without_second_condition},
with $X=G\times Y$.
We suppose that $H$,
after passing from functions to classes of equivalence,
does not coincide with $L^2(G\times Y)$:
\[
\bigl\{f+\cZ(G\times Y)\colon\ f\in H\bigr\} \ne L^2(G\times Y).
\]
Applying the theorem about the orthogonal decomposition,
we choose $h$ in $\cL^2(G\times Y)$
such that $h\notin\cZ(G\times Y)$
and $h$ is orthogonal to $H$.
Define $\psi$ by
\[
\psi(x,y,v) \eqdef \conj{h(-x,v)}
\qquad(x\in G,\ y,v\in Y).
\]
For every $(x,y)$ in $G\times Y$,
consider $g_{x,y}\eqdef\conj{\psi(x-\cdot,y,\cdot)}$,
i.e.,
\[
g_{x,y}(u,v)
=\conj{\psi(x-u,y,v)} 
=h(u-x,v).
\]
The properties of $h$ 
and the invariance of $H$ with respect to $\rho$ imply that
$g_{x,y}\in\cL^2(G\times Y)$ and $g_{x,y}\perp H$.
We define $S_\psi$ by~\eqref{eq:S_def}.
Then, for every $f$ in $H$,
\[
(S_\psi f)(x,y)
= \int_{G\times Y} f\,\conj{g_{x,y}}\,\dif\,(\nu\otimes\la)
= 0.
\]
Thus, $S_\psi$ is the zero operator.
\end{rem}

\begin{prop}
\label{prop:integral_form_for_operators_belonging_to_the_centralizer}
Let $T\in\cC(\rho)$.
Define $\psi\colon G\times Y\times Y\to\bC$ by
\[
\psi(u,v,y) \eqdef (T K_{0,y})(u,v)
\qquad(u\in G,\ v,y\in Y).
\]
Then, $\psi\in\cA$ and $T=S_\psi$.
\end{prop}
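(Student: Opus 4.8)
The plan is to reduce the statement to the diagonalization already established in Section~\ref{sec:translation_invariant}, together with the integral representation of Section~\ref{sec:integral_representation}. Since $T\in\cC(\rho)$, the surjectivity part of Theorem~\ref{thm:centralizer_through_Vb} — equivalently, the identity~\eqref{eq:centralizer_via_R} — supplies some $b$ in $L^\infty(\Om)$ with $T=V_b$. The whole argument then hinges on recognizing that the function $\psi$ defined in the statement is nothing other than $\phi_b$ from Definition~\ref{def:phi_b}.

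To make this identification, I would simply unfold the definition of $\psi$ and apply the special case~\eqref{eq:phi_via_VK} of Lemma~\ref{lem:VK}: for all $u$ in $G$ and $v,y$ in $Y$,
\[
\psi(u,v,y)
=(T K_{0,y})(u,v)
=(V_b K_{0,y})(u,v)
=\phi_b(u,v,y).
\]
Hence $\psi=\phi_b$ as functions on $G\times Y\times Y$. With this equality in hand, the conclusion is immediate from Corollary~\ref{cor:Vb_eq_Sphib}, which asserts $\phi_b\in\cA$ and $V_b=S_{\phi_b}$. Therefore $\psi=\phi_b\in\cA$ and $T=V_b=S_{\phi_b}=S_\psi$, as required. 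In effect, this proposition is the converse companion of Corollary~\ref{cor:Vb_eq_Sphib}: it shows that \emph{every} operator in the centralizer arises as $S_\psi$ for a suitable kernel $\psi$ in $\cA$, thereby closing the loop on the correspondence between $\cC(\rho)$ and $\cA$.

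I expect no genuine obstacle here. All the analytic content — existence in the Lebesgue sense of the integral defining $\phi_b$ (Remark~\ref{rem:integral_in_def_phi_exists_in_Lebesgue_sense}), membership of $\phi_b$ in $\cA_0$ and the boundedness of the associated integral operator (Corollary~\ref{cor:Vb_eq_Sphib}), and the passage from the action on the single vector $K_{0,y}$ to the integral representation valid on all of $H$ (Theorem~\ref{thm:V_integral_representation}) — has already been carried out upstream. The only point deserving a moment's care is invoking the surjectivity of $\La$ in Theorem~\ref{thm:centralizer_through_Vb} to guarantee that a representing symbol $b$ exists at all; once $b$ is produced, the proposition collapses to the one-line pointwise identity $\psi=\phi_b$ and a reference to Corollary~\ref{cor:Vb_eq_Sphib}.
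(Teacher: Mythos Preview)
Your proof is correct, but it takes a different route from the paper's. The paper argues directly from Proposition~\ref{prop:initial_integral_form_of_bouned_invariant_operator}: it identifies $\psi(u,v,y)=K_T(u,v,0,y)$, reads off $\psi\in\cA_0$ from~\eqref{eq:KT_1_2_belongs_to_H}, and then uses~\eqref{eq:initial_integral_form_of_bouned_invariant_operator} together with~\eqref{eq:KT_invariant_property} to see that $\ttS_\psi f=Tf$ for every $f\in H$; boundedness of $T$ then forces $\psi\in\cA$ and $S_\psi=T$. No appeal to the diagonalization Theorem~\ref{thm:centralizer_through_Vb} is made. Your argument, by contrast, invokes the surjectivity of $\La$ to produce $b$ with $T=V_b$, then recognizes $\psi=\phi_b$ via~\eqref{eq:phi_via_VK} and finishes with Corollary~\ref{cor:Vb_eq_Sphib}. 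The payoff of the paper's approach is that it is independent of the spectral machinery: it would go through for any $T\in\cC(\rho)$ using only the RKHS structure and translation-invariance, without ever needing assumption~\eqref{eq:L_decomposition} or the commutativity of $\cC(\rho)$. Your approach is shorter once Theorem~\ref{thm:centralizer_through_Vb} and Corollary~\ref{cor:Vb_eq_Sphib} are in hand, and it has the bonus of making the identity $\psi=\phi_b$ explicit, which is useful downstream when relating $\Tht^{-1}$ and $\Phi$.
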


\begin{proof}
We apply Proposition~\ref{prop:initial_integral_form_of_bouned_invariant_operator} and notice that
\[
\psi(u,v,y)
=K_T(u,v,0,y).
\]
Hence, $\psi\in\cA_0$ by \eqref{eq:KT_1_2_belongs_to_H}.
By~\eqref{eq:initial_integral_form_of_bouned_invariant_operator} and~\eqref{eq:KT_invariant_property},
we get $\ttS f = T f$ for every $f\in H$.
Since $T\in\cB(H)$,
we conclude that $\psi\in\cA$
and $S_\psi=T$.
\end{proof}

\begin{rem}[$\cA$ is a vector space]
\label{rem:cA_is_vector_space}
We denote by $\bC^{G\times Y\times Y}$
the complex vector space
consisting of all complex-valued functions defined on
$G\times Y\times Y$, with pointwise operations.
It follows from the linear property of the integral
that $\cA$ is a vector subspace of $\bC^{G\times Y\times Y}$.
\end{rem}

Corollary~\ref{cor:Vb_eq_Sphib}
and Lemma~\ref{lem:S_psi_in_centralizer}
allow us to define the following functions.

\begin{defn}
\label{def:main_bijections}
We define
$\Phi\colon L^\infty(\Om)\to\cA$
and
$\Tht\colon \cA\to \cC(\rho)$
by
\[
\Phi(b) \eqdef \phi_b\qquad(b\in L^\infty(\Om)),
\]
\[
\Tht(\psi) \eqdef S_\psi\qquad(\psi\in\cA).
\]
\end{defn}

\begin{thm}
\label{thm:main_bijections}
$\Phi$ and $\Tht$ are bijective and linear,
and $\Tht\circ\Phi=\La$.
\end{thm}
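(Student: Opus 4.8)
The plan is to assemble the claim from the pieces already established, treating the three statements (bijectivity of $\Phi$, bijectivity of $\Tht$, and the factorization $\Tht\circ\Phi=\La$) separately but exploiting their interdependence. The cleanest route is to prove the factorization first, then deduce bijectivity of $\Phi$ and $\Tht$ from the known bijectivity of $\La$ (Theorem~\ref{thm:centralizer_through_Vb}).

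First I would verify that $\Phi$ and $\Tht$ are well-defined and linear. Well-definedness of $\Phi$ is exactly Corollary~\ref{cor:Vb_eq_Sphib}, which tells us $\phi_b\in\cA$; well-definedness of $\Tht$ is Lemma~\ref{lem:S_psi_in_centralizer}, which gives $S_\psi\in\cC(\rho)$. Linearity of $\Phi$ follows from inspecting formula~\eqref{eq:integral_kernel_from_spectral_function}: the map $b\mapsto\phi_b$ is visibly linear in $b$ since the integrand depends linearly on $b(\xi)$. Linearity of $\Tht$ follows from the linearity of the integral defining $S_\psi$ in~\eqref{eq:S_def}, together with the fact (Remark~\ref{rem:cA_is_vector_space}) that $\cA$ is a vector space, so $S_{\psi_1+c\psi_2}=S_{\psi_1}+cS_{\psi_2}$ on the common domain $H$.

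Next I would establish the factorization $\Tht\circ\Phi=\La$. For $b\in L^\infty(\Om)$ we compute
\[
(\Tht\circ\Phi)(b)
=\Tht(\phi_b)
=S_{\phi_b}
=V_b
=\La(b),
\]
where the middle equality $S_{\phi_b}=V_b$ is precisely the content of Corollary~\ref{cor:Vb_eq_Sphib} and the last equality is the definition of $\La$. Since this holds for every $b$, we get $\Tht\circ\Phi=\La$ as maps $L^\infty(\Om)\to\cC(\rho)$.

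Finally I would deduce bijectivity. Since $\La=\Tht\circ\Phi$ is a bijection by Theorem~\ref{thm:centralizer_through_Vb}, it is in particular injective, which forces $\Phi$ to be injective; and it is surjective, which forces $\Tht$ to be surjective. To close the argument I must supply the two remaining one-sided facts. Surjectivity of $\Phi$ onto $\cA$: given $\psi\in\cA$, Lemma~\ref{lem:S_psi_in_centralizer} gives $S_\psi\in\cC(\rho)$, so $S_\psi=V_b$ for some (unique) $b=\La^{-1}(S_\psi)$ by Theorem~\ref{thm:centralizer_through_Vb}; then $\phi_b$ and $\psi$ are both in $\cA$ and satisfy $S_{\phi_b}=V_b=S_\psi$, so $S_{\psi-\phi_b}=0$, and Proposition~\ref{prop:injectivity_psi_to_Spsi} (injectivity of $\psi\mapsto S_\psi$) yields $\psi=\phi_b=\Phi(b)$. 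Injectivity of $\Tht$: this is Proposition~\ref{prop:injectivity_psi_to_Spsi} together with linearity, since $S_{\psi_1}=S_{\psi_2}$ gives $S_{\psi_1-\psi_2}=0$ hence $\psi_1=\psi_2$. With $\Phi$ and $\Tht$ each both injective and surjective, both are bijections, completing the proof.

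I expect no serious obstacle here, as all the heavy lifting has been done in the earlier propositions; the only point requiring care is to order the deductions so that the bijectivity of $\La$ is leveraged correctly, and to confirm that the map $\psi\mapsto S_\psi$ is genuinely injective on all of $\cA$ (rather than merely on the range of $\Phi$), which is guaranteed by Proposition~\ref{prop:injectivity_psi_to_Spsi} precisely because condition~\eqref{eq:psi_cond_2} is built into the definition of $\cA$ — as Remark~\ref{rem:injectivity2} warns, injectivity can fail without it.
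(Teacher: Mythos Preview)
Your proposal is correct and follows essentially the same route as the paper: linearity from the integral, the factorization $\Tht\circ\Phi=\La$ from Corollary~\ref{cor:Vb_eq_Sphib}, injectivity of $\Tht$ from Proposition~\ref{prop:injectivity_psi_to_Spsi}, and surjectivity of $\Tht$ from surjectivity of $\La$. The only cosmetic difference is in handling $\Phi$: the paper first assembles the bijectivity of $\Tht$ and then writes $\Phi=\Tht^{-1}\circ\La$, whereas you argue surjectivity of $\Phi$ directly by showing $\psi=\phi_b$ via $S_{\psi-\phi_b}=0$; these are the same deduction in slightly different order.
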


\begin{proof}
From the linear property of the integral,
it follows that $\Phi$ and $\Tht$ are linear transformations.
Corollary~\ref{cor:Vb_eq_Sphib} yields
$\Tht\circ\Phi=\La$.
The invertibility of $\La$ implies that $\Tht$ is surjective,
and Proposition~\ref{prop:injectivity_psi_to_Spsi} implies that $\Tht$ is injective.
Moreover, by Proposition~\ref{prop:integral_form_for_operators_belonging_to_the_centralizer},
the inverse function $\Tht^{-1}\colon\cC(\rho)\to\cA$
acts by the following explicit formula:
\begin{equation}
\label{eq:Tht_inverse}
(\Tht^{-1}T)(u,v,y) = (T K_{0,y})(u,v)\qquad(u\in G,\ v,y\in Y).
\end{equation}
Now, from $\Tht\circ\Phi=\La$ we get
$\Phi=\Tht^{-1}\circ\La$.
Being a composition of invertible functions, $\Phi$ is invertible.
\end{proof}

Figure~\ref{fig:main_bijections} shows the main objects
of Theorem~\ref{thm:main_bijections}.

\begin{figure}[hbt]
\centering
\begin{tikzpicture}
	\node [spacenode] (Linfty)
	at (-2, 0) {$L^\infty(\Om)$};
	\node [spacenode] (cA)
	at (2, 0) {$\cA$};
	\node [spacenode] (centralizer)
	at (0, 3.5) {$\cC(\rho)$};
	\draw [myedge]
	(Linfty) edge (cA);
	\node at (0, 0) [above]
	{$\Phi$};
	\node at (0, 0) [below]
	{$b\mapsto\phi_b$};
	\draw [myedge]
	(Linfty) edge (centralizer);
	\node at (-1, 1.75) [above left]
	{$b\mapsto V_b$};
    \node at (-1, 1.75) [below right]
    {$\La$};
	\draw [myedge]
	(cA) edge (centralizer);
	\node at (1, 1.75) [above right]
	{$\psi \mapsto S_\psi$};
	\node at (1, 1.75) [below left]
	{$\Theta$};
\end{tikzpicture}
\caption{
\label{fig:main_bijections}
Spaces $L^\infty(\Om)$, $\cA$, and $\cC(\rho)$,
and bijections between them.
}
\end{figure}

\begin{cor}
\label{cor:descriptions_of_the_main_sets}
The sets $L^\infty(\Om)$, $\cA$, and $\cC(\rho)$
are related in the following manner:
\begin{align}
\label{eq:cA_in_terms_of_phi}
\cA &= \{\phi_b\colon\ b\in L^\infty(\Om)\},
\\
\label{eq:centralizer_in_terms_of_Spsi}
\cC(\rho) &= \{S_\psi\colon\ \psi\in\cA\}.
\end{align}
Moreover, $\cA$ can be written as the image of $\Tht^{-1}$,
where $\Tht^{-1}$ acts by~\eqref{eq:Tht_inverse}:
\begin{equation}
\label{eq:cA_in_terms_of_Tht_inverse}
\cA = \{\Tht^{-1}(S)\colon\ S\in\cC(\rho)\}.
\end{equation}
\end{cor}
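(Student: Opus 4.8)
The three identities in Corollary~\ref{cor:descriptions_of_the_main_sets} are all immediate consequences of Theorem~\ref{thm:main_bijections}, which already establishes that $\Phi\colon L^\infty(\Om)\to\cA$ and $\Tht\colon\cA\to\cC(\rho)$ are bijections. The whole point is simply to rephrase surjectivity of these bijections as set equalities and then record the explicit inverse formula.

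For~\eqref{eq:cA_in_terms_of_phi}, I would invoke the fact that $\Phi$ is a bijection onto $\cA$, so in particular $\Phi$ is surjective; since $\Phi(b)=\phi_b$ by Definition~\ref{def:main_bijections}, the image $\{\phi_b\colon b\in L^\infty(\Om)\}$ is exactly the codomain $\cA$. For~\eqref{eq:centralizer_in_terms_of_Spsi}, the same reasoning applies to $\Tht$: it is a bijection onto $\cC(\rho)$ by Theorem~\ref{thm:main_bijections}, and $\Tht(\psi)=S_\psi$, so the image $\{S_\psi\colon\psi\in\cA\}$ equals $\cC(\rho)$.

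For the last identity~\eqref{eq:cA_in_terms_of_Tht_inverse}, I would use that $\Tht$ is a bijection, hence its inverse $\Tht^{-1}\colon\cC(\rho)\to\cA$ is also a bijection, and therefore surjective onto $\cA$. This gives
\[
\cA = \{\Tht^{-1}(S)\colon\ S\in\cC(\rho)\}
\]
directly. The explicit action of $\Tht^{-1}$ is the formula~\eqref{eq:Tht_inverse} already derived in the proof of Theorem~\ref{thm:main_bijections}, namely $(\Tht^{-1}T)(u,v,y)=(TK_{0,y})(u,v)$, so no further computation is needed.

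There is essentially no obstacle here; the statement is a bookkeeping corollary that packages the surjectivity halves of the two bijections together with the inverse formula for the reader's convenience. The only thing to be careful about is citing the right source for each claim: surjectivity of $\Phi$ and $\Tht$ and the formula for $\Tht^{-1}$ all come from Theorem~\ref{thm:main_bijections} (equivalently, from Corollary~\ref{cor:Vb_eq_Sphib}, Proposition~\ref{prop:injectivity_psi_to_Spsi}, and Proposition~\ref{prop:integral_form_for_operators_belonging_to_the_centralizer}).
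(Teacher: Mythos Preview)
Your proposal is correct and matches the paper's approach: the corollary is stated without an explicit proof in the paper, as it follows immediately from the bijectivity of $\Phi$ and $\Tht$ established in Theorem~\ref{thm:main_bijections}, exactly as you outline.
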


\begin{cor}
\label{cor:Theta_inverse_adjoint_operator}
For every $T$ in $\cC(\rho)$,
every $u$ in $G$ and every $v,y$ in $Y$,
\begin{equation}
\label{eq:Theta_inverse_adjoint_operator}
(\Tht^{-1} T^\ast)(u,v,y)
= \conj{(T K_{0,v})(-u, y)}
= \conj{(\Tht^{-1}T)(-u,y,v)}.
\end{equation}
\end{cor}

\begin{proof}
We apply~\eqref{eq:Tht_inverse},
the reproducing property,
the definition of the adjoint operator,
and the assumption~\eqref{eq:K_is_invariant}:
\begin{align*}
   (\Tht^{-1} T^\ast)(u,v,y) 
   &=  (T^\ast K_{0,y})(u,v) 
   =  \langle T^\ast K_{0,y}, K_{u,v}\rangle  
   =  \langle K_{0,y}, TK_{u,v}\rangle
   \\[0.5ex]
   &=  \langle K_{0,y}, T\rho(u)K_{0,v}\rangle  
   =  \langle K_{0,y}, \rho(u) TK_{0,v}\rangle  
   =  \langle \rho(-u) K_{0,y}, TK_{0,v}\rangle
   \\[0.5ex]
   &=  \langle K_{-u,y}, TK_{0,v}\rangle  
   =  \overline{\langle TK_{0,v}, K_{-u,y}\rangle}  
   =  \overline{(TK_{0,v})(-u,y)}
   \\[0.5ex]
   &=  \overline{(\Tht^{-1}T)(-u,y,v)}.
   \qedhere
\end{align*}   
\end{proof}

In the next proposition
we provide an explicit construction for $\Phi^{-1}$.
It is similar to Proposition~\ref{prop:Lambda_inverse}.

\begin{prop}
\label{prop:Phi_inverse}
Let $\psi\in\cA$.
Then, the following equality holds in $L^\infty(\Om)$:
\begin{equation}
\label{eq:Phi_inverse}
(\Phi^{-1}\psi)(\xi)
=\frac{(R\psi(\cdot,\cdot,y))(\xi)}{\conj{q_\xi(y)}}
\qquad(\xi\in\Om),
\end{equation}
where $y$ in $Y_0$ such that $\xi\in\Om_y$.
\end{prop}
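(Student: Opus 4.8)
The plan is to reduce the statement about $\Phi^{-1}$ to the already-established formula for $\La^{-1}$ from Proposition~\ref{prop:Lambda_inverse}, using the relation $\Phi = \Tht^{-1}\circ\La$ (equivalently $\La = \Tht\circ\Phi$) proved in Theorem~\ref{thm:main_bijections}. Concretely, given $\psi\in\cA$, I set $b \eqdef \Phi^{-1}\psi \in L^\infty(\Om)$, so that $\psi = \phi_b$ and, by Corollary~\ref{cor:Vb_eq_Sphib}, $S_\psi = V_b = \La(b)$. Thus $b = \La^{-1}(S_\psi)$, and I may feed $S \eqdef S_\psi$ into the formula~\eqref{eq:Lambda_inverse}.

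\begin{proof}
First, the right-hand side of~\eqref{eq:Phi_inverse} is well defined by the same argument as in the proof of Proposition~\ref{prop:Lambda_inverse}: by~\eqref{eq:psi_cond_1}, $\psi(\cdot,\cdot,y)\in H$ for each $y$ in $Y_0$, so its image $R\psi(\cdot,\cdot,y)$ lies in $L^2(\Om)$, while $\xi\mapsto\conj{q_\xi(y)}$ is nonvanishing on $\Om_y$; since $(\Om_y)_{y\in Y_0}$ is a finite or countable measurable partition of $\Om$, gluing the quotients over the pieces $\Om_y$ yields a well-defined element of $L^\infty(\Om)$.

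Now set $b\eqdef\Phi^{-1}\psi$, so $\psi=\Phi(b)=\phi_b$. By Corollary~\ref{cor:Vb_eq_Sphib}, $S_\psi=V_b=\La(b)$, hence $b=\La^{-1}(S_\psi)$. Applying Proposition~\ref{prop:Lambda_inverse} to the operator $S\eqdef S_\psi\in\cC(\rho)$, we obtain, for $\xi\in\Om$ with $y\in Y_0$ chosen so that $\xi\in\Om_y$,
\[
(\Phi^{-1}\psi)(\xi)
=b(\xi)
=(\La^{-1}(S_\psi))(\xi)
=\frac{(R\,S_\psi K_{0,y})(\xi)}{\conj{q_\xi(y)}}.
\]
Finally, by Lemma~\ref{lem:tildeS_K_eq_psi} applied with $p=0$, $q=y$, we have $(S_\psi K_{0,y})(x,y')=\psi(x,y',y)$ for all $x\in G$, $y'\in Y$; that is, $S_\psi K_{0,y}=\psi(\cdot,\cdot,y)$ as an element of $H$. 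Substituting this identity into the numerator gives exactly~\eqref{eq:Phi_inverse}.
\end{proof}

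The only genuine point requiring care is the last substitution: the quantity $RSK_{0,y}$ appearing in Proposition~\ref{prop:Lambda_inverse} must be recognized as $R\psi(\cdot,\cdot,y)$, which is precisely what Lemma~\ref{lem:tildeS_K_eq_psi} supplies by identifying $S_\psi K_{0,y}$ with the fixed-argument function $\psi(\cdot,\cdot,y)$ belonging to $H$. I expect no serious obstacle here, since all the needed ingredients—the factorization $\La=\Tht\circ\Phi$, the inversion formula for $\La$, and the action of $S_\psi$ on reproducing kernels—are already in place; the proposition is essentially a transport of Proposition~\ref{prop:Lambda_inverse} through the bijection $\Tht$.
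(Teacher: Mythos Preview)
Your proof is correct and follows essentially the same approach as the paper: both arguments identify $\psi(\cdot,\cdot,y)$ with $S_\psi K_{0,y}=V_b K_{0,y}$ and then read off $b$ from $R\psi(\cdot,\cdot,y)=b\,\conj{q_{\bullet}(y)}$ on each $\Om_y$. The only organizational difference is that you invoke Proposition~\ref{prop:Lambda_inverse} as a black box, whereas the paper repeats its short computation inline; your version is slightly more economical.
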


\begin{proof}
Let $Y_0$ and $(\Om_y)_{y\in Y_0}$
be as in Proposition~\ref{prop:partition_of_Omega}.
The right-hand side of~\eqref{eq:Phi_inverse},
which we denote by $b$,
is defined similarly to the construction
in the proof of Proposition~\ref{prop:Lambda_inverse}.
For every $y$ in $Y_0$,
we use the assumption that
$\psi(\cdot,\cdot,y)$ belongs to $H$.
We apply $R$ to $\psi(\cdot,\cdot,y)$,
restrict the obtained element of $L^2(\Om)$ to $\Om_y$,
and divide over $\conj{q_{\bullet}(y)}$ restricted to $\Om_y$.
Finally, we ``gather'' $b$ from these ``pieces'',
where $y$ runs through $Y_0$.

Let us show that $\phi_b=\psi$.
Since $\psi \in \cA$,
there exists $c\in L^\infty(\Om)$ such that $\psi=\Phi(c)=\phi_c$.
Since $\psi(\cdot, \cdot, y) \in H$,
we can apply $R$.
The following equality holds for almost all $\xi$ in $\Om$:
\[
(R\psi(\cdot,\cdot, y))(\xi) = c(\xi) \conj{q_\xi(y)}.
\] 
It is enough to show that $c$ coincides with $b$ on every $\Om_y,\, y\in Y_0$.
We fix $y_0\in Y_0$.
Then, $q_\xi(y_0)\neq 0$ for all $\xi$ in $\Om_{y_0}$.
So, for almost all $\xi$ in $\Om_{y_0}$, the above equation gives
\begin{equation*}
(\Phi^{-1}\psi)(\xi)
= c(\xi)
=\frac{(R\psi(\cdot,\cdot,y_0))(\xi)}{\conj{q_\xi(y_0)}}
= b(\xi).
\end{equation*}
As $y_0$ is an arbitrary element of $Y_0$
and $Y_0$ is finite or countable,
it follows that $b=c$ for almost every $\xi$ in $\Om$.
\end{proof}

\section{Special classes of translation-invariant operators}
\label{sec:integral_representation_special_operators}

In this section,
we provide an integral representation of the form $S_\psi$
for operators belonging to some important subclasses of $\cC(\rho)$.

\begin{prop}[Integral representation of translation-invariant operators with spectral symbols in the Wiener--Fourier algebra]
	Let $h\in L^1(G)$ and $b=Fh$.
	Then,
	\begin{equation}
		\label{eq:phi_for_Wiener_case}
		\phi_b(x,y,v)
		=
		\int_{G} h(u)\, K_{0,v}(x-u,y)\, \dif\nu(u).
	\end{equation}
\end{prop}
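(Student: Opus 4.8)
The plan is to recognise the defining integral for $\phi_b$ as an inverse Fourier transform whose integrand is a product of two Fourier transforms, and then to obtain the formula from Fubini's theorem together with Fourier inversion. Since $b=Fh$ means $b(\xi)=\int_G h(u)\,\conj{\xi(u)}\,\dif\nu(u)$, I would substitute this into the definition~\eqref{eq:integral_kernel_from_spectral_function}. A preliminary remark is that the integration over $\Om$ may be replaced by integration over all of $\hG$: if $\xi\notin\Om$ then $L_{\xi,y}(y)=0$ for every $y$, and the Schwarz inequality~\eqref{eq:Schwarz_inequality_for_L} forces $L_{\xi,v}(y)=0$, so the integrand vanishes off $\Om$.

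Next I would interchange the order of integration. The key input for Fubini's theorem is that $L_{\bullet,v}(y)\in L^1(\hG)$, which is exactly the estimate behind Remark~\ref{rem:integral_in_def_phi_exists_in_Lebesgue_sense}: combining~\eqref{eq:Schwarz_inequality_for_L}, the Cauchy--Schwarz inequality, and~\eqref{eq:norm_q_with_fixed_y} gives $\int_{\hG}|L_{\xi,v}(y)|\,\dif\hnu(\xi)\le\|K_{0,y}\|\,\|K_{0,v}\|<+\infty$. As $|\xi(x-u)|=1$ and $h\in L^1(G)$, the double integral of the absolute value is bounded by $\|h\|_{L^1}\,\|L_{\bullet,v}(y)\|_{L^1(\hG)}$, so Fubini applies, and using $\xi(x)\conj{\xi(u)}=\xi(x-u)$ I obtain
\[
\phi_b(x,y,v)
=\int_G h(u)\left(\int_{\hG}\xi(x-u)\,L_{\xi,v}(y)\,\dif\hnu(\xi)\right)\dif\nu(u).
\]

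The remaining step is to identify the inner integral with $K_{0,v}(x-u,y)$. By~\eqref{eq:L_def}, with the roles of $y$ and $v$ exchanged, $L_{\bullet,v}(y)$ is precisely the Fourier transform $F[K_{0,v}(\cdot,y)]$, where $K_{0,v}(\cdot,y)\in L^1(G)$ by the technical assumption~\eqref{eq:K_tecnhical_assumption}. Since moreover $L_{\bullet,v}(y)\in L^1(\hG)$, the Fourier inversion formula (valid for the Plancherel normalisation of $\hnu$) gives $\int_{\hG}\xi(s)\,L_{\xi,v}(y)\,\dif\hnu(\xi)=K_{0,v}(s,y)$ for almost every $s\in G$. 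Setting $s=x-u$ and inserting this into the display yields the claimed identity
\[
\phi_b(x,y,v)=\int_G h(u)\,K_{0,v}(x-u,y)\,\dif\nu(u).
\]

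I expect the Fourier inversion step to be the main obstacle, for two reasons: one must verify its hypotheses ($L^1$ on both sides, which is exactly why the integrability of $L_{\bullet,v}(y)$ is needed), and inversion yields equality only almost everywhere in $s$. The latter is harmless here, because the inner integral is integrated against $h$ over $u$; as $u$ ranges over $G$ the argument $s=x-u$ ranges over $G$, so an almost-everywhere identity in $s$ becomes one in $u$, and integration against the $L^1$ function $h$ ignores null sets. Equivalently, the whole computation can be packaged through the convolution theorem, $L_{\bullet,v}(y)\,b=F[K_{0,v}(\cdot,y)]\cdot Fh=F\bigl(K_{0,v}(\cdot,y)*h\bigr)$, after which applying $F^{-1}$ recovers the convolution on the right-hand side.
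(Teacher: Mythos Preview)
Your proof is correct and essentially the same as the paper's: both rest on the convolution theorem and Fourier inversion, with the identical integrability inputs $K_{0,v}(\cdot,y)\in L^1(G)$ (from~\eqref{eq:K_tecnhical_assumption}) and $L_{\bullet,v}(y)\in L^1(\hG)$ (from Remark~\ref{rem:integral_in_def_phi_exists_in_Lebesgue_sense}). The paper merely runs the argument from the other end---starting with $g_{v,y}\eqdef h\ast K_{0,v}(\cdot,y)$, computing $Fg_{v,y}=b\,L_{\bullet,v}(y)$ by the convolution theorem, and then applying Fourier inversion---which is exactly the ``equivalent packaging'' you mention in your closing paragraph.
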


\begin{proof}
Given $v,y$ in $Y$, we define $g_{v,y}$ by
\[
g_{v,y}(\cdot) \eqdef h \ast K_{0,v}(\cdot, y).
\]
In other words, $g_{v,y}(x)$ is the right-hand side of~\eqref{eq:phi_for_Wiener_case}.
By assumption~\eqref{eq:K_tecnhical_assumption},
we have $K_{0,v}(\cdot,y)\in L^1(G)$.
Since $L^1(G)$ is closed under the convolution operation,
$g_{v,y}\in L^1(G)$.
By the convolution theorem,
\[
(Fg_{v,y})(\xi)
= (Fh)(\xi) \, (FK_{0,v}(\cdot,y))(\xi)
= b(\xi) L_{\xi,v}(y).
\]
Due to Remark~\ref{rem:integral_in_def_phi_exists_in_Lebesgue_sense},
we know that $Fg_{v,y}\in L^1(\Om)$.
Therefore, we can apply the Fourier inversion theorem:
\[
   g_{v,y}(x)
   = \int_{\widehat{G}} \xi(x)\, (Fg_{v,y})(\xi)
   = \int_{\hG} \xi(x)\, b(\xi)\, L_{\xi,v}(y)\, \dif\hnu(\xi)
   = \phi_b(x,y,v).
\qedhere
\]
\end{proof} 

\begin{prop}[Integral representation of horizontal translations]
Let $a\in G$.
Then, $\rho(a)=S_\psi$, where
\begin{equation}
\psi(x,y,v)
=K_{a,v}(x,y).
\end{equation}
\end{prop}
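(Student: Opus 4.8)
The plan is to show that the operator $\rho(a)$, which belongs to $\cC(\rho)$ since it commutes with every $\rho(b)$ (the group $G$ is abelian), coincides with $S_\psi$ for the proposed kernel $\psi(x,y,v)=K_{a,v}(x,y)$. By Proposition~\ref{prop:integral_form_for_operators_belonging_to_the_centralizer}, every operator $T$ in $\cC(\rho)$ equals $S_\psi$ with $\psi(u,v,y)=(TK_{0,y})(u,v)$, and this $\psi$ automatically lies in $\cA$. So the cleanest route is to apply that proposition directly to $T\eqdef\rho(a)$ and simply compute the resulting kernel.

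First I would verify that $\rho(a)\in\cC(\rho)$: since $G$ is abelian, $\rho(a)\rho(b)=\rho(a+b)=\rho(b+a)=\rho(b)\rho(a)$ for all $b$ in $G$, so $\rho(a)$ commutes with the whole image of $\rho$. Then I would invoke Proposition~\ref{prop:integral_form_for_operators_belonging_to_the_centralizer} with $T=\rho(a)$ to conclude that $\rho(a)=S_{\psi}$, where
\[
\psi(x,y,v)=(\rho(a)K_{0,v})(x,y).
\]
The final step is the computation of this expression. Using the definition~\eqref{eq:rho_H(a)} of $\rho(a)$ and then the translation-invariance~\eqref{eq:K_is_invariant} of the reproducing kernel,
\[
(\rho(a)K_{0,v})(x,y)=K_{0,v}(x-a,y)=K_{a,v}(x,y),
\]
where the last equality is~\eqref{eq:K_is_invariant} read with the roles arranged so that $K_{a,v}(x,y)=K_{0,v}(x-a,y)$. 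This matches the claimed $\psi$, finishing the proof.

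I do not anticipate a genuine obstacle here, since the statement is essentially an application of the already-established Proposition~\ref{prop:integral_form_for_operators_belonging_to_the_centralizer} together with the kernel's invariance identity. The only point requiring a little care is bookkeeping of the variable order in~\eqref{eq:K_is_invariant}: that identity is stated as $K_{x,y}(u,v)=K_{0,y}(u-x,v)$, so I would apply it in the ``horizontal'' variable with the second (the $Y$-)index fixed at $v$, obtaining $K_{a,v}(x,y)=K_{0,v}(x-a,y)$, which is exactly what the computation of $(\rho(a)K_{0,v})(x,y)$ produces. An alternative, equally short proof would bypass the proposition and instead verify directly that the integral operator $S_\psi$ with $\psi(x,y,v)=K_{a,v}(x,y)$ reproduces $\rho(a)f$ for $f$ in the dense subspace $\cD$ via Lemma~\ref{lem:tildeS_K_eq_psi}, but the route through Proposition~\ref{prop:integral_form_for_operators_belonging_to_the_centralizer} is more economical and I would prefer it.
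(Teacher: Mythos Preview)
Your proposal is correct and matches the paper's main argument: the paper likewise applies Proposition~\ref{prop:integral_form_for_operators_belonging_to_the_centralizer} to $T=\rho(a)$ and computes $\psi(x,y,v)=(\rho(a)K_{0,v})(x,y)=K_{a,v}(x,y)$. The paper additionally appends a short direct verification of $\rho(a)=S_\psi$ via the reproducing property (writing $(\rho(a)f)(x,y)=\langle f,K_{x-a,y}\rangle$ and identifying $\overline{K_{x-a,y}(u,v)}=\psi(x-u,y,v)$), but this is supplementary to the route you describe.
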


\begin{proof}
We know that $\rho(a)\in\cC(\rho)$.
By 
Proposition~\ref{prop:integral_form_for_operators_belonging_to_the_centralizer},
$\rho(a)=S_\psi$, where
\[
\psi(x,y,v)
=(\rho(a) K_{0,v})(x,y)
=K_{a,v}(x,y).
\]
Let us verify the equality $\rho(a)=S_\psi$ directly.
For $f$ in $H$ and $(x,y)$ in $Y$,
\[
	(\rho(a)f)(x,y)
    = \langle \rho(a)f, K_{x,y}\rangle 
	=\langle f, \rho(-a)K_{x,y} \rangle 
	= \langle f, K_{x-a,y}\rangle.
\]
Since
\[
\conj{K_{x-a,y}(u,v)}
=
K_{u,v}(x-a,y)
=
K_{a,v}(x-u,y)
=\psi(x-u,y,v),
\]
we get
\[
(\rho(a)f)(x,y)
= \int_{G\times Y} f(u,v)\, \psi(x-u,y,v)\, \dif\nu(u)\, \dif\la(v)
= (S_\psi f)(x,y).
\qedhere
\]
\end{proof}

\begin{example}[Integral representation of Toeplitz operators with vertical symbols]
Let $g\in L^\infty(Y)$.
Define $\widetilde{g}\colon G\times Y\to\bC$,
\[
\widetilde{g}(u,v)\eqdef g(v),
\]
and consider the Toeplitz operator
$T_{\widetilde{g}}$.
From \cite{HerreraMaximenkoRamos2022},
we know that
$R T_{\widetilde{g}} R^\ast=M_{\ga_g}$,
where
\[
(\ga_g)(\xi)
\eqdef
\int_Y g(t)\, |q_\xi(t)|^2\,\dif\la(t).
\]
By Theorem~\ref{thm:V_integral_representation}, we have 
\begin{align*}
T_{\widetilde{g}}
= R^\ast M_{\ga_g}R = S_{\phi_{\ga_g}},
\end{align*}
where $\phi_{\ga_g}$ is given by \eqref{eq:integral_kernel_from_spectral_function} with $b$ replaced by $\ga_g$.

On the other hand,
by the definition of Toeplitz operators and the reproducing property,
	\begin{align*}
		(T_{\widetilde{g}}f)(x,y)
		&=\langle \widetilde{g}f, K_{x,y}\rangle_{L^2(G\times Y)}
		=
		\int_{G\times Y}
		f(u,v)\, g(v)\,
		\overline{K_{x,y}(u,v)}\,
		\dif\nu(u)\,
		\dif\la(v)
		\\[0.5ex]
		&=
		\int_{G\times Y}
		f(u,v)\, g(v)\,
		\overline{K_{0,y}(u-x,v)}\,
		\dif\nu(u)\,
		\dif\la(v).
	\end{align*}
Let $\psi_1(x,y,v)\eqdef g(v)\,\conj{K_{0,y}(-x,v)}$.
In general, $\psi_1$ does not coincide with $\phi_{\ga_g}$,
because it, in general,
does not satisfy~\eqref{eq:psi_cond_2}.
Thus, we have two different integral representations for the same operator.
\end{example}

\section{W*-algebra of functions}
\label{sec:algebra_of_functions}

In this section,
we construct a W*-algebra of functions on $G\times Y\times Y$.
In particular, we define a new multiplication operation, adjoint operation, and a norm on the set $\cA\subseteq G\times Y\times Y$ with respect to which it forms a commutative $W^\ast$-algebra.
With this structure on $\cA$,
functions $\Phi$ and $\Tht$ are isometric isomorphisms.

\begin{defn}
\label{def:cA_adjoint_operation}
Let $\psi\in\cA$.
Define $\psi^\dagger\colon G\times Y\times Y\to\bC$ by
\begin{equation}
\label{eq:cA_adjoint_operation}
\psi^\dagger(x,y,v)
\eqdef
\conj{\psi(-x,v,y)},
\end{equation}
for $x$ in $G$ and $y,v$ in $Y$.
\end{defn}

\begin{defn}
\label{def:cA_multiplication_operation}
Let $\psi,\eta\in\cA$.
Define $\psi\odot\eta\colon G\times Y\times Y\to\bC$ by
\begin{equation}
\label{eq:cA_mul_operation}
(\psi \odot \eta)(x,y,v)
\eqdef
\int_{G\times Y}
\psi(x-s,y,t)\,\eta(s,t,v)\,
\dif\nu(s)\, \dif\la(t),
\end{equation}
for $x$ in $G$ and $y,v$ in $Y$.
\end{defn}

\begin{defn}
\label{def:cA_norm}
For each $\psi$ in $\cA$,
define $\|\cdot\|_\cA$ on $\cA$ by
\begin{equation}
\label{eq:cA_norm}
\|\psi\|_{\cA} \eqdef \|S_\psi\|_{\cB(H)}.
\end{equation}
\end{defn}

Since $\Tht$ is an isomorphism of vector spaces $\cA$ and $\cC(\rho)$,
$\|\cdot\|_\cA$ is a norm on $\cA$.

\begin{lem}
\label{lem:cA_adjoint_operation_and_Phi}
For every $b$ in $L^\infty(\Om)$,
\begin{equation}
\label{eq:Phi_conj}
\Phi(\conj{b})=\Phi(b)^\dagger.
\end{equation}
\end{lem}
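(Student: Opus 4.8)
The plan is to prove the identity $\Phi(\conj{b}) = \Phi(b)^\dagger$ by unwinding both sides to explicit integral formulas and comparing them. Recall that $\Phi(b) = \phi_b$ is given by Definition~\ref{def:phi_b}, namely \eqref{eq:integral_kernel_from_spectral_function}, and the adjoint operation $\psi^\dagger$ is defined by \eqref{eq:cA_adjoint_operation}. So both sides are concrete functions on $G\times Y\times Y$, and the cleanest route is a direct pointwise computation.

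First I would write out the left-hand side. Applying Definition~\ref{def:phi_b} with $b$ replaced by $\conj{b}$ gives
\[
\phi_{\conj{b}}(x,y,v)
=
\int_{\Om} \xi(x)\, L_{\xi,v}(y)\, \conj{b(\xi)}\, \dif\hnu(\xi).
\]
Next I would compute the right-hand side. By definition of the dagger operation, $\phi_b^\dagger(x,y,v) = \conj{\phi_b(-x,v,y)}$, and substituting into \eqref{eq:integral_kernel_from_spectral_function} yields
\[
\phi_b^\dagger(x,y,v)
=
\conj{\int_{\Om} \xi(-x)\, L_{\xi,y}(v)\, b(\xi)\, \dif\hnu(\xi)}
=
\int_{\Om} \conj{\xi(-x)}\, \conj{L_{\xi,y}(v)}\, \conj{b(\xi)}\, \dif\hnu(\xi).
\]
Now the comparison reduces to two elementary facts about the dual group and the kernel $L$. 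Since $\xi$ is a character, $\conj{\xi(-x)} = \xi(x)$ (characters take values in the unit circle and $\xi(-x) = \conj{\xi(x)}$). Moreover, by the decomposition \eqref{eq:L_decomposition} we have $L_{\xi,y}(v) = \conj{q_\xi(y)}\, q_\xi(v)$, so $\conj{L_{\xi,y}(v)} = q_\xi(y)\,\conj{q_\xi(v)} = L_{\xi,v}(y)$; equivalently, one may simply invoke the Hermitian symmetry $\conj{L_{\xi,y}(v)} = L_{\xi,v}(y)$ that any reproducing kernel enjoys. Substituting these two identities into the expression for $\phi_b^\dagger$ gives exactly $\phi_{\conj{b}}(x,y,v)$, completing the proof.

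I do not expect any serious obstacle here; the statement is essentially bookkeeping once the definitions are expanded. The only point requiring a little care is the justification for interchanging complex conjugation with the integral and for pulling the conjugate inside onto each factor, but this is harmless because the integral defining $\phi_b$ converges absolutely (Remark~\ref{rem:integral_in_def_phi_exists_in_Lebesgue_sense}). The two key simplifications, $\conj{\xi(-x)} = \xi(x)$ and the Hermitian symmetry of $L$, are both immediate, so the whole argument is a short chain of equalities.
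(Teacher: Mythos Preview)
Your proposal is correct and follows essentially the same approach as the paper: both arguments expand $\phi_{\conj{b}}$ and $\phi_b^\dagger$ via Definition~\ref{def:phi_b} and match the integrands using the character identity $\conj{\xi(-x)}=\xi(x)$ together with the Hermitian symmetry $\conj{L_{\xi,y}(v)}=L_{\xi,v}(y)$. The only cosmetic difference is that the paper runs the chain of equalities from $\phi_{\conj{b}}$ directly to $\psi^\dagger$, whereas you compute each side separately and compare.
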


\begin{proof}
Let $b\in L^\infty(\Om)$
and $\psi\eqdef\Phi(b)$.
Notice that $\conj{b}$ belongs to $L^\infty(\Om)$.
Thus, $\Phi(\conj{b})$ makes sense.
Moreover, by Corollary~\ref{cor:Vb_eq_Sphib},
it belongs to $\cA$.
Let us verify~\eqref{eq:Phi_conj}.
For all $x$ in $G$ and $y,v$ in $Y$,
\begin{align*}
(\Phi(\conj{b}))(x,y,v)
&=
\phi_{\conj{b}}(x,y,v) 
=
\int_{\Om} \conj{b(\xi)}\, \xi(x)\, L_{\xi,v}(y)\, \dif\hnu(\xi)
\\[1ex]
&=
\conj{\int_{\Om} b(\xi)\, \xi(-x)\, L_{\xi,y}(v)\, \dif\hnu(\xi)}
= \conj{\psi(-x,v,y)}
= \psi^\dagger(x,y,v).
\qedhere
\end{align*}
\end{proof}

\begin{lem}
\label{lem:cA_adjoint_operation_and_Theta}
For every $\psi$ in $\cA$,
we have $\psi^\dagger\in\cA$ and
\begin{equation}
\label{eq:Tht_adjoint}
\Tht(\psi^\dagger)=\Tht(\psi)^\ast.
\end{equation}
\end{lem}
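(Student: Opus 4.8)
The plan is to establish the statement by transporting everything through the bijections $\Phi$ and $\Tht$, whose relationship $\Tht\circ\Phi=\La$ (Theorem~\ref{thm:main_bijections}) is already in hand, rather than by manipulating the integral kernel $\psi^\dagger$ directly. The key point is that the adjoint operation $\dagger$ on $\cA$ and the complex conjugation on $L^\infty(\Om)$ are intertwined by $\Phi$, which is exactly the content of Lemma~\ref{lem:cA_adjoint_operation_and_Phi}.

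First I would use the surjectivity of $\Phi$ (from Theorem~\ref{thm:main_bijections}) to write $\psi=\Phi(b)$ for a unique $b\in L^\infty(\Om)$. Since $\conj{b}\in L^\infty(\Om)$, Corollary~\ref{cor:Vb_eq_Sphib} guarantees that $\Phi(\conj{b})\in\cA$; but by Lemma~\ref{lem:cA_adjoint_operation_and_Phi} we have $\Phi(\conj{b})=\Phi(b)^\dagger=\psi^\dagger$, which immediately yields $\psi^\dagger\in\cA$. This disposes of the first assertion with essentially no computation.

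For the identity~\eqref{eq:Tht_adjoint}, I would compute both sides via the chain of maps. On one hand,
\[
\Tht(\psi^\dagger)=\Tht(\Phi(\conj{b}))=\La(\conj{b})=V_{\conj{b}}.
\]
On the other hand,
\[
\Tht(\psi)^\ast=\Tht(\Phi(b))^\ast=\La(b)^\ast=V_b^\ast.
\]
Thus the claim reduces to the single identity $V_{\conj{b}}=V_b^\ast$. This I would verify from the definition $V_b=R^\ast M_b R$ together with the isometric property of $R$: since $R^\ast R=I$ and $M_b^\ast=M_{\conj{b}}$, we get $V_b^\ast=(R^\ast M_b R)^\ast=R^\ast M_b^\ast R=R^\ast M_{\conj{b}}R=V_{\conj{b}}$. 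Chaining the two displays then gives $\Tht(\psi^\dagger)=\Tht(\psi)^\ast$.

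I expect no genuine obstacle here, since all the difficult analytic work (the construction of $R$, the integral representation of $V_b$, and the intertwining Lemma~\ref{lem:cA_adjoint_operation_and_Phi}) has already been carried out. The only care required is bookkeeping: one must invoke surjectivity of $\Phi$ to produce $b$, and one must be explicit that $\La=\Tht\circ\Phi$ so that the two computations above are legitimate. If one preferred to avoid passing through $\La$, an alternative is to verify $\langle\Tht(\psi^\dagger)f,g\rangle=\langle f,\Tht(\psi)g\rangle$ for $f,g\in\cD$ by a direct Fubini argument on the double integral defining $S_{\psi^\dagger}$ and $S_\psi$, using the explicit form~\eqref{eq:cA_adjoint_operation} of $\psi^\dagger$; but the route through $\Phi$ and $V_b^\ast=V_{\conj b}$ is shorter and reuses results already proved.
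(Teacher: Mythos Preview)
Your argument is correct and coincides almost verbatim with the paper's ``Second proof'': write $\psi=\Phi(b)$, apply Lemma~\ref{lem:cA_adjoint_operation_and_Phi} to get $\psi^\dagger=\Phi(\conj{b})\in\cA$, and then use $\La=\Tht\circ\Phi$ together with $\La(\conj{b})=\La(b)^\ast$ (equivalently $V_{\conj{b}}=V_b^\ast$). The paper also records a one-line ``First proof'' via Corollary~\ref{cor:Theta_inverse_adjoint_operator}, which gives $\Tht^{-1}(T^\ast)=(\Tht^{-1}T)^\dagger$ directly.
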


\begin{proof}[First proof]
Follows from Corollary~\ref{cor:Theta_inverse_adjoint_operator}.
\end{proof}

\begin{proof}[Second proof]
Given $\psi$ in $\cA$,
we apply Corollary~\ref{cor:descriptions_of_the_main_sets}
and find $b$ in $L^\infty(\Om)$ such that
$\Phi(b)=\psi$.
Then, by Lemma~\ref{lem:cA_adjoint_operation_and_Phi},
$\psi^\dagger
=\Phi(b)^\dagger
=\Phi(\conj{b})
\in\cA$.
Furthermore,
we apply the fact that $\La=\Tht\circ\Phi$
(Theorem~\ref{thm:main_bijections})
and $\La$ is an isometric isomorphism of C*-algebras
(Theorem \ref{thm:centralizer_through_Vb}):
\begin{align*}
\Tht(\psi^\dagger)
&=\Tht(\Phi(\conj{b}))
=(\Tht\circ\Phi)(\conj{b})
=\La(\conj{b})
=\La(b)^\ast
=(\Tht(\Phi(b))^\ast
=\Tht(\psi)^\ast,
\end{align*}
and we get~\eqref{eq:Tht_adjoint}.
\end{proof}

\begin{lem}
\label{lem:cA_mul}
Let $\psi,\eta\in\cA$.
Then, $\psi\odot\eta\in\cA$,
and $\Tht(\psi\odot\eta)=\Tht(\psi)\Tht(\eta)$.
\end{lem}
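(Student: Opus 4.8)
The plan is to prove the claim by transporting the multiplication operation $\odot$ on $\cA$ through the bijection $\Tht$ and showing it corresponds exactly to operator composition in $\cC(\rho)$. Since $\cC(\rho)$ is closed under composition (it is an algebra), once I establish that $\Tht(\psi\odot\eta)=\Tht(\psi)\Tht(\eta)=S_\psi S_\eta$ as operators, the membership $\psi\odot\eta\in\cA$ will follow automatically: the right-hand side is a bounded operator in $\cC(\rho)$, and by Corollary~\ref{cor:descriptions_of_the_main_sets} every element of $\cC(\rho)$ is of the form $S_\chi$ for some $\chi\in\cA$; injectivity of $\psi\mapsto S_\psi$ (Proposition~\ref{prop:injectivity_psi_to_Spsi}) then forces $\chi=\psi\odot\eta$, giving both conclusions at once.

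The key computation is to verify directly that the integral operator with kernel $\psi\odot\eta$ equals the composition $S_\psi S_\eta$. First I would apply $S_\eta$ to a test function, then apply $S_\psi$ to the result, and use Fubini's theorem to interchange the order of integration. Concretely, for $f\in H$ and $(x,y)\in G\times Y$, I would write
\begin{align*}
(S_\psi(S_\eta f))(x,y)
&=
\int_{G\times Y}
(S_\eta f)(s,t)\,\psi(x-s,y,t)\,\dif\nu(s)\,\dif\la(t)
\\
&=
\int_{G\times Y}\int_{G\times Y}
f(u,v)\,\eta(s-u,t,v)\,\psi(x-s,y,t)\,
\dif\nu(u)\,\dif\la(v)\,\dif\nu(s)\,\dif\la(t).
\end{align*}
After interchanging integrals and substituting $s\mapsto s+u$ (using invariance of the Haar measure $\nu$), the inner double integral over $(s,t)$ becomes exactly $(\psi\odot\eta)(x-u,y,v)$ as defined in~\eqref{eq:cA_mul_operation}, so the whole expression collapses to $(S_{\psi\odot\eta}f)(x,y)$ once I know $\psi\odot\eta\in\cA_0$. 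To handle the bootstrapping cleanly, I would instead work on the dense subspace $\cD$ where $\tS_{\psi\odot\eta}$ is unambiguously defined, or alternatively invoke the abstract argument of the first paragraph to sidestep a direct verification of the kernel conditions.

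The main obstacle I anticipate is justifying the Fubini interchange, since the kernels $\psi,\eta$ are not assumed absolutely integrable in all variables simultaneously. The safe route is to reduce to reproducing kernels: it suffices to check $\Tht(\psi\odot\eta)=S_\psi S_\eta$ on the dense set $\cD=\linspan\{K_{p,q}\}$, where by Lemma~\ref{lem:tildeS_K_eq_psi} the action of each $S$ on a $K_{p,q}$ produces a kernel-slice lying in $H$, and the integrals reduce to inner products controlled by the Schwarz inequality and assumption~\eqref{eq:K_tecnhical_assumption}. A more economical approach, which I would prefer to present, avoids the pointwise computation entirely: using Theorem~\ref{thm:main_bijections} I write $\psi=\Phi(b)$ and $\eta=\Phi(c)$ for $b,c\in L^\infty(\Om)$, so that $S_\psi=V_b=R^\ast M_b R$ and $S_\eta=V_c=R^\ast M_c R$. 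Since $R R^\ast=I$ on $L^2(\Om)$ and multiplication operators commute, $S_\psi S_\eta=R^\ast M_{bc}R=V_{bc}=S_{\phi_{bc}}=S_{\Phi(bc)}$, which lies in $\cC(\rho)$; thus $\psi\odot\eta=\Phi(bc)\in\cA$ by injectivity, and $\Tht(\psi\odot\eta)=V_{bc}=V_b V_c=\Tht(\psi)\Tht(\eta)$. The only remaining point is to confirm that this $\Phi(bc)$ genuinely equals the pointwise integral $\psi\odot\eta$ of Definition~\ref{def:cA_multiplication_operation}, which is exactly where the Fubini computation sketched above is needed, but now only to identify two already-valid elements of $\cA$ rather than to establish boundedness from scratch.
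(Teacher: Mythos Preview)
Your strategy is sound and will work, but the paper's proof is shorter because it sidesteps the Fubini issue you flag. The key observation you are circling around but never quite land on is that the very definition~\eqref{eq:cA_mul_operation} already reads $(\psi\odot\eta)(x,y,v)=(S_\psi\,\eta(\cdot,\cdot,v))(x,y)$: the integral defining $\odot$ is literally $S_\psi$ applied to the slice $\eta(\cdot,\cdot,v)$, which lies in $H$ because $\eta\in\cA_0$ (condition~\eqref{eq:psi_cond_1}). Combining this with $\eta(\cdot,\cdot,v)=S_\eta K_{0,v}$ from Lemma~\ref{lem:tildeS_K_eq_psi} gives $(\psi\odot\eta)(x,y,v)=(S_\psi S_\eta K_{0,v})(x,y)$, i.e.\ $\psi\odot\eta=\Tht^{-1}(S_\psi S_\eta)$ by formula~\eqref{eq:Tht_inverse}; both conclusions then follow at once from the bijectivity of $\Tht$ (Theorem~\ref{thm:main_bijections}). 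Your detour through $\Phi$ and the multiplication-operator picture is correct, but the last step you single out---identifying $\Phi(bc)$ with the pointwise integral $\psi\odot\eta$---is exactly the two-line computation above, and once you see it the rest becomes redundant: no iterated integral ever needs to be interchanged, because $S_\psi$ acts on a single element of $H$ rather than on an intermediate integral expression.
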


\begin{proof}
For every $x\in G$ and every $y,v\in Y$,
comparing~\eqref{eq:S_def} with~\eqref{eq:cA_mul_operation}, we get
\begin{equation}
\label{eq:cA_mul_operation_via_S}
(\psi\odot \eta)(x,y,v)
= (S_\psi \eta(\cdot,\cdot,v))(x, y).
\end{equation}
Furthermore,
by Corollary~\ref{cor:Vb_eq_Sphib},
we have $\eta(x,y,v) = S_{\eta} K_{0,v}(x,y)$.
We substitute this formula into~\eqref{eq:cA_mul_operation_via_S}
and express $\psi\odot\eta$ in terms of the operators
$S_\psi$ and $S_\eta$:
\begin{equation}
\label{eq:psi_mul_eta_via_S_psi_S_eta}
(\psi \odot \eta)(x, y, v)
= (S_\psi S_\eta K_{0, v})(x, y).
\end{equation}
On the other hand,
by Lemma~\ref{lem:S_psi_in_centralizer},
we have $S_\psi,S_\eta\in\cC(\rho)$.
Since $\cC(\rho)$ is an algebra,
$S_\psi S_\eta\in\cC(\rho)$.
By Theorem~\ref{thm:main_bijections}
and formula~\eqref{eq:Tht_inverse},
\begin{equation}
\label{eq:omega_explicit_via_S_psi_S_eta}
\Tht^{-1} (S_\psi S_\eta)(x,y,v)
= (S_\psi S_\eta K_{0,v})(x,y).
\end{equation}
It follows from~\eqref{eq:psi_mul_eta_via_S_psi_S_eta}
and~\eqref{eq:omega_explicit_via_S_psi_S_eta}
that
\begin{equation}
\label{eq:psi_mul_eta_via_Tht_inverse_S_psi_S_eta}
\psi\odot\eta = \Tht^{-1}(S_\psi S_\eta).
\end{equation}
Since $\Tht$ is a bijection between $\cA$ and $\cC(\rho)$,
we conclude that $\psi\odot\eta\in\cA$.
Moreover, applying $\Tht$ to both sides of~\eqref{eq:psi_mul_eta_via_Tht_inverse_S_psi_S_eta},
we obtain that $\Tht(\psi\odot\eta)=\Tht(\psi)\Tht(\eta)$.
\end{proof}

\begin{lem}
\label{lem:Phi_and_product}
Let $b,c\in L^\infty(\Om)$. Then,
\begin{equation}
\label{eq:Phi_product}
\Phi(bc)=\Phi(b)\Phi(c).
\end{equation}
\end{lem}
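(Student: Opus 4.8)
The plan is to leverage the algebraic structures already established and the multiplicativity of the intermediate maps, rather than computing $\Phi(b)\Phi(c)=\phi_b\odot\phi_c$ directly from the integral formula~\eqref{eq:cA_mul_operation}. The key observation is that $\Tht$ is injective (it is a bijection by Theorem~\ref{thm:main_bijections}), so to prove the equality $\Phi(bc)=\Phi(b)\Phi(c)$ in $\cA$ it suffices to apply $\Tht$ to both sides and verify the resulting identity in $\cC(\rho)$.

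First I would compute $\Tht(\Phi(bc))$. By the relation $\Tht\circ\Phi=\La$ from Theorem~\ref{thm:main_bijections}, this equals $\La(bc)=V_{bc}$. Since $\La$ is an isomorphism of W*-algebras (Theorem~\ref{thm:centralizer_through_Vb}), it is multiplicative, so $\La(bc)=\La(b)\La(c)=V_b V_c$. Next I would compute $\Tht(\Phi(b)\Phi(c))$. Writing $\psi\eqdef\Phi(b)$ and $\eta\eqdef\Phi(c)$, Lemma~\ref{lem:cA_mul} gives $\Tht(\psi\odot\eta)=\Tht(\psi)\Tht(\eta)$. Again using $\Tht\circ\Phi=\La$, we have $\Tht(\psi)=\La(b)=V_b$ and $\Tht(\eta)=\La(c)=V_c$, so $\Tht(\Phi(b)\Phi(c))=V_b V_c$. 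Thus both sides have the same image $V_b V_c$ under $\Tht$.

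Finally, since $\Tht$ is a bijection (in particular injective), the equality $\Tht(\Phi(bc))=\Tht(\Phi(b)\Phi(c))$ forces $\Phi(bc)=\Phi(b)\Phi(c)$, which is precisely~\eqref{eq:Phi_product}. I expect no serious obstacle here: the entire argument is a formal chaining of the multiplicativity of $\La$ (from Theorem~\ref{thm:centralizer_through_Vb}), the compatibility relation $\Tht\circ\Phi=\La$, and the multiplicative property of $\Tht$ on the $\odot$-product (Lemma~\ref{lem:cA_mul}), together with injectivity of $\Tht$. The only point requiring minor care is ensuring that $bc\in L^\infty(\Om)$ so that $\Phi(bc)$ is defined, but this is immediate because $L^\infty(\Om)$ is closed under pointwise multiplication with $\|bc\|_\infty\le\|b\|_\infty\|c\|_\infty$. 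This lemma is in fact the multiplicativity half of the statement that $\Phi$ is an algebra isomorphism, complementing Lemma~\ref{lem:cA_adjoint_operation_and_Phi}.
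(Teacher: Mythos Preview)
Your proposal is correct and follows essentially the same approach as the paper: both apply $\Tht$ to each side, use $\Tht\circ\Phi=\La$ and the multiplicativity of $\La$ (Theorem~\ref{thm:centralizer_through_Vb}) together with Lemma~\ref{lem:cA_mul}, and then conclude by injectivity of $\Tht$. The paper simply compresses this into a single chain of equalities and applies $\Tht^{-1}$ at the end.
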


\begin{proof}
It follows easily from Lemma~\ref{lem:cA_mul}
and Theorems~\ref{thm:centralizer_through_Vb} and \ref{thm:main_bijections}.
Indeed,
\[
\Tht(\Phi(bc))
=\La(bc)
=\La(b)\La(c)
=\Tht(\Phi(b))\Tht(\Phi(c))
=\Tht(\Phi(b)\Phi(c)).
\]
Applying $\Tht^{-1}$ to both sides, we get $\Phi(bc)=\Phi(b)\Phi(c)$.
\end{proof}

\begin{prop}[transport of C*-algebra structure]
\label{prop:transport_Cstar_structure}
Let $\cE$ be C*-algebra,
$\cF$ be a set,
and $\Psi\colon\cE\to\cF$ be a bijection.
Define $\oplus\colon\cF^2\to\cF$,
$\odot\colon\bC\times\cF\to\cF$,
$\otimes\colon\cF^2\to\cF$,
$\dagger\colon\cF\to\cF$,
$\|\cdot\|_{\cF}\colon\cF\to[0,+\infty)$ by
\begin{align*}
a \oplus b &\eqdef \Psi(\Psi^{-1}(a)+\Psi^{-1}(b)),
&
\la \odot a &\eqdef \Psi(\la \Psi^{-1}(a)),
\\
a \otimes b &\eqdef \Psi(\Psi^{-1}(a) \times \Psi^{-1}(b)),
&
a^{\dagger} &\eqdef \Psi(\Psi^{-1}(a)^\ast),
\\
\|a\|_{\cF} & \eqdef \|\Psi^{-1}(a)\|_{\cE},
\end{align*}
where $+$, $\cdot$, $\times$, $\ast$, and $\|\cdot\|_{\cE}$
are the corresponding operations and norm on $\cE$.
Then, $\cF$, considered with these operations and function $\|\cdot\|_{\cF}$,
is a C*-algebra, and $\Psi$ is an isometric isomorphism of C*-algebras.
If $\cE$ is commutative, then $\cF$ also is commutative.
If $\cE$ is a W*-algebra, then $\cF$ also is a W*-algebra.
\end{prop}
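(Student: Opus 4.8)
The plan is to exploit the fact that each defining equation for the operations on $\cF$ says precisely that $\Psi^{-1}$ carries that operation back to the corresponding operation on $\cE$. Unwinding the definitions, one has $\Psi^{-1}(a\oplus b)=\Psi^{-1}(a)+\Psi^{-1}(b)$, $\Psi^{-1}(\la\odot a)=\la\,\Psi^{-1}(a)$, $\Psi^{-1}(a\otimes b)=\Psi^{-1}(a)\times\Psi^{-1}(b)$, $\Psi^{-1}(a^\dagger)=\Psi^{-1}(a)^\ast$, and $\|a\|_{\cF}=\|\Psi^{-1}(a)\|_{\cE}$. Thus $\Psi^{-1}$, and hence $\Psi$, intertwines all of the structure by construction; the entire content of the proposition is that each C*-algebra axiom survives this transport of structure.

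First I would check that $\cF$ is a complex vector space. Each axiom --- commutativity and associativity of $\oplus$, distributivity, and compatibility of $\odot$ with scalar multiplication --- is an identity between elements of $\cF$; applying the injective map $\Psi^{-1}$ to both sides reduces it to the corresponding identity in $\cE$, which holds because $\cE$ is a vector space. The zero vector is $\Psi(0_{\cE})$ and the additive inverse of $a$ is $\Psi(-\Psi^{-1}(a))$. In the same mechanical way I would verify that $\otimes$ is associative and bilinear, that $\dagger$ is a conjugate-linear involution with $(a\otimes b)^\dagger=b^\dagger\otimes a^\dagger$ and $(a^\dagger)^\dagger=a$, and that $\|\cdot\|_{\cF}$ is a norm: positive definiteness follows since $\|a\|_{\cF}=0$ forces $\Psi^{-1}(a)=0_{\cE}$, whence $a=\Psi(0_{\cE})=0_{\cF}$, while homogeneity and the triangle inequality transport directly. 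Submultiplicativity $\|a\otimes b\|_{\cF}\le\|a\|_{\cF}\|b\|_{\cF}$ and the C*-identity $\|a^\dagger\otimes a\|_{\cF}=\|a\|_{\cF}^2$ are likewise immediate from the corresponding properties in $\cE$.

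The only step that is not a bare equational check is completeness. Since $\|\cdot\|_{\cF}$ is defined to make $\Psi$ an isometry and $\cE$ is a Banach space, any $\|\cdot\|_{\cF}$-Cauchy sequence in $\cF$ is carried by $\Psi^{-1}$ to a Cauchy sequence in $\cE$, which converges; applying the isometry $\Psi$ to the limit produces a $\|\cdot\|_{\cF}$-limit in $\cF$. Hence $\cF$ is a C*-algebra, and because $\Psi$ is a bijection intertwining all operations and preserving the norm, $\Psi$ is an isometric $\ast$-isomorphism.

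Finally, the two hereditary claims are short. Commutativity transports because, when $\cE$ is commutative, $a\otimes b=\Psi(\Psi^{-1}(a)\times\Psi^{-1}(b))=\Psi(\Psi^{-1}(b)\times\Psi^{-1}(a))=b\otimes a$. For the W*-assertion I would invoke the definition of W*-algebra recalled in Section~\ref{sec:translation_invariant}: if $\cE$ is $\ast$-isomorphic to a von Neumann algebra $\cM$, then composing that isomorphism with $\Psi$ exhibits $\cF$ as $\ast$-isomorphic to $\cM$, so $\cF$ is again a W*-algebra. I expect no genuine obstacle here; the main thing to be careful about is bookkeeping --- ensuring that every axiom really is an identity or inequality that survives transport, and that completeness (the one non-algebraic property) is handled through the isometry as above.
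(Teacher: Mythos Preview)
Your proof is correct and follows the same approach as the paper's own proof: both argue that every C*-algebra axiom transports along the bijection $\Psi$ because $\Psi^{-1}$ intertwines all the operations and the norm. The paper's version is briefer---it only spells out $(a\otimes b)^\dagger=b^\dagger\otimes a^\dagger$ and submultiplicativity as sample verifications and cites Bourbaki for the general transport-of-structure principle---whereas you also record completeness and the W*-clause explicitly, but there is no difference in strategy.
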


\begin{proof}
The transport of structures is a well-known idea in mathematics
(see, e.g., a general treatment in
Bourbaki~\cite[Chapter~IV, Section~5]{Bourbaki1968}),
and the proof of this proposition is very simple.
Let us only check the properties
$(a\otimes b)^\dagger=(b^\dagger)\otimes(a^\dagger)$
and $\|a\otimes b\|_{\cF}\le\|a\|_{\cF}\|b\|_{\cF}$.
If $x=\Psi^{-1}(a)$ and $y=\Psi^{-1}(b)$, then
\[
(a\otimes b)^\dagger
= (\Psi(x\times y))^\dagger
= \Psi((x\times y)^\ast)
= \Psi(y^\ast \times x^\ast)
= \Psi(y^\ast) \otimes \Psi(x^\ast)
= b^\dagger \otimes a^\dagger
\]
and
\[
\|a\otimes b\|_{\cF}
= \|\Psi(x\times y)\|_{\cF}
= \|x\times y\|_{\cE}
\le \|x\|_{\cE}\,\|y\|_{\cE}
= \|a\|_{\cF}\,\|b\|_{\cF}.
\qedhere
\]
\end{proof}

We consider $\cA$ with the operations $\odot$ and $\dagger$
(see Definitions~\ref{def:cA_adjoint_operation}
and~\ref{def:cA_multiplication_operation}),
and the norm $\|\cdot\|_\cA$ from Definition~\ref{def:cA_norm}.

Now we are ready to state our main theorem of this section.  

\begin{thm}
$\cA$ is a commutative W*-algebra.
$\Phi\colon L^\infty(\Om)\to\cA$
and $\Tht\colon\cA\to\cC(\rho)$
are isometric isomorphisms of W*-algebras.
\end{thm}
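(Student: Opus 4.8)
The plan is to use the machinery of Proposition~\ref{prop:transport_Cstar_structure} (transport of C*-algebra structure) applied to the bijection $\Phi\colon L^\infty(\Om)\to\cA$. Since $L^\infty(\Om)$ is a commutative W*-algebra, this proposition will automatically endow $\cA$ with a commutative W*-algebra structure and make $\Phi$ an isometric isomorphism, \emph{provided} that the transported operations coming from $\Phi$ coincide with the operations $\odot$, $\dagger$, and $\|\cdot\|_\cA$ that we have already defined on $\cA$. So the real content is verifying that these ``intrinsic'' operations on $\cA$ agree with the ones transported through $\Phi$.

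The key steps, in order, would be as follows. First, I would observe that $\Phi$ is a linear bijection (Theorem~\ref{thm:main_bijections}), so the transported vector space operations coincide with the pointwise ones on $\cA$ (Remark~\ref{rem:cA_is_vector_space}). Second, for the multiplication: the transported product of $\psi=\Phi(b)$ and $\eta=\Phi(c)$ is $\Phi(bc)$, and Lemma~\ref{lem:Phi_and_product} gives exactly $\Phi(bc)=\Phi(b)\Phi(c)$; combined with Lemma~\ref{lem:cA_mul}, which identifies $\Phi(b)\Phi(c)$ with $\psi\odot\eta$ (via $\Tht$ being a bijection and $\Tht(\psi\odot\eta)=\Tht(\psi)\Tht(\eta)$), the transported product equals $\odot$. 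Third, for the involution: the transported adjoint of $\psi=\Phi(b)$ is $\Phi(\conj{b})$, and Lemma~\ref{lem:cA_adjoint_operation_and_Phi} shows $\Phi(\conj{b})=\Phi(b)^\dagger=\psi^\dagger$, so the transported involution equals $\dagger$. Fourth, for the norm: by definition $\|\psi\|_\cA=\|S_\psi\|_{\cB(H)}=\|\Tht(\psi)\|$, and since $\La=\Tht\circ\Phi$ is isometric (Theorem~\ref{thm:centralizer_through_Vb}), we have $\|\Phi(b)\|_\cA=\|\La(b)\|=\|b\|_\infty$, which is precisely the transported norm $\|b\|_{L^\infty(\Om)}$.

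Once these four identifications are in place, Proposition~\ref{prop:transport_Cstar_structure} applies directly with $\cE=L^\infty(\Om)$, $\cF=\cA$, and $\Psi=\Phi$, yielding that $\cA$ is a commutative W*-algebra and that $\Phi$ is an isometric isomorphism of W*-algebras. Finally, to conclude that $\Tht$ is also an isometric isomorphism, I would note that $\Tht=\La\circ\Phi^{-1}$ (from $\Tht\circ\Phi=\La$), a composition of isometric isomorphisms of W*-algebras, hence itself one.

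The main obstacle is not any single hard estimate but rather the bookkeeping: ensuring that every transported operation is matched with the correct intrinsic definition and that the supporting lemmas are invoked in the right combination. In particular, the multiplication step requires care, since $\odot$ is defined directly as a ``partial convolution'' on $\cA$ and its agreement with the transported product rests on chaining Lemma~\ref{lem:cA_mul} and Lemma~\ref{lem:Phi_and_product} correctly through both $\Phi$ and $\Tht$; this is the step where a misalignment of the bijections would most easily slip in.
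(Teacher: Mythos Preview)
Your proposal is correct and follows essentially the same approach as the paper, which simply cites Lemmas~\ref{lem:cA_adjoint_operation_and_Phi}--\ref{lem:Phi_and_product} and Proposition~\ref{prop:transport_Cstar_structure}. Your write-up is a faithful unpacking of how those lemmas verify that the transported operations through $\Phi$ coincide with $\odot$, $\dagger$, and $\|\cdot\|_\cA$; the only minor redundancy is that Lemma~\ref{lem:Phi_and_product} already encodes $\Phi(bc)=\Phi(b)\odot\Phi(c)$ (its proof uses Lemma~\ref{lem:cA_mul}), so you need not invoke Lemma~\ref{lem:cA_mul} separately for the multiplication step.
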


\begin{proof}
It follows from Lemmas~\ref{lem:cA_adjoint_operation_and_Phi}--\ref{lem:Phi_and_product}
and Proposition~\ref{prop:transport_Cstar_structure}.
\end{proof}

\begin{rem}[General form of projections in $\cA$]
Given a measurable subset $E$ of $\Om$,
we denote its characteristic function by $\charfun_E$.
For $b=\charfun_E$,
\eqref{eq:integral_kernel_from_spectral_function}
takes the following form:
\begin{equation}
\label{eq:phi_char_fun}
\phi_{\charfun_E}(x,y,v)
=
\int_{E}
\xi(x)\, L_{\xi,v}(y)\,\dif\hnu(\xi).
\end{equation}
It is well known and easy to see that every projection in the W*-algebra $L^\infty(\Om)$ is of the form $\charfun_E$ for some $E$
(indeed, if $f^2=f$ almost everywhere, then $f(\xi)\in\{0,1\}$
for almost all $\xi$ in $\Om$).
Since $\Phi$ is an isomorphism of W*-algebras,
\eqref{eq:phi_char_fun}
is the general form of projections in $\cA$.
\end{rem}

\begin{rem}
    Let $x,y \in G$ and $v,y\in Y$.  For $\psi \in \cA$, we define $f_\psi: G \times Y \times G\times Y$ by
    \begin{align*}
        f_\psi(x,y,u,v) \eqdef \psi(x-u,y,v).
    \end{align*}
    Then, it can be shown that $\{f_\psi \colon \psi \in \cA\}$ forms a commutative C*-algebra of functions on $ G \times Y \times G\times Y$.  Applying this technique to examples in Section \ref{sec:examples}, we can construct several C*-algebras of analytic  functions and C*-algebras of harmonic functions on various subsets of $\mathbb{C}$.  In particular, we obtain the C*-algebras constructed in \cite{Bais_PAMS_2024, Mohan-Venku_2024, Ma-Zhu_PAMS_2024}.
\end{rem}

\begin{rem}[Extension of the new multiplication formula]
    If we do not care about the C*-algebra structure, then it is natural to look for sets $\mathcal{S}\subseteq \big\{f: G\times G\times Y\rightarrow \bC\big\}$ such that $\cA \subseteq \mathcal{S}$ and $\odot: \mathcal{S}\times \mathcal{S}\rightarrow \mathcal{S}$ is well-defined.  

    We observe that if $\psi \in \cA$ and $\eta \in \cA_0$, then $\psi \odot \eta = \eta \odot \psi $ and it belongs to $\cA_0$.  That is, $\odot : \cA \times \cA_0 \rightarrow \cA_0$ and $\odot : \cA_0 \times \cA \rightarrow \cA_0$ are well-defined.  Now, it is interesting to check if we can extend $\odot$ from $\cA_0 \times \cA_0 \rightarrow \cA_0.$  
\end{rem}

Recently, in \cite{Karapetyants_2020_convolution_disk, Karapetyants_2022_convolution_disk, Karapetyants_2024_convolution_UHP},
multiplication operations in some special cases are considered with different motivations and their mapping properties are discussed.

\section{Examples}
\label{sec:examples}

In this section,
we recall some of the well-known W*-algebras of operators
and provide integral representation for operators in them.
Throughout this section, we use the following conventions. 
We suppose that $\hG$ is a locally compact abelian group
\emph{topologically isomorphic} to the dual group of $G$.
Let $E(\cdot, \cdot)\colon G\times\hG\to\bT$
be the duality pairing between $G$ and $\hG$
(see, e.g., Folland~\cite[Section~4.1]{Folland2016}).
The Haar measures $\nu$ and $\hnu$ are chosen in such a way
that the Fourier transform $F$
is a unitary operator from $L^2(G)$ onto $L^2(\hG)$.
In particular, for $G=\bR$,
we employ $\hG=\bR$,
the usual Lebesgue measure $\nu=\hnu$, 
and $E(x,\xi) = \enumber^{2\pi \iu x\xi}$.
Furthermore,
we use the following notations:
$\bR_+ \eqdef (0,\infty)$,
$\bNz \eqdef \{0,1,2,\dots\}$,
$\bT \eqdef \{z\in\bC\colon\ \vert z\vert=1\}$,
$\charfun_Z$
is the characteristic function of the set $Z$.

\begin{example}[Vertical operators in the Bergman space over the upper half-plane]
\label{example:vertical_Bergman_UHP}
This example is inspired by
Vasilevski~\cite[Sections~3.1 and 5.2]{Vasilevski2008book},
see also
\cite[Example~9.1]{HerreraMaximenkoRamos2022}
and \cite{BaisNaiduPinlodi2023Bergman}.
Let $\Pi = \{z\in \bC\colon\ \Im(z)>0\}$ be the upper half-plane and $L_{\mathrm{hol}}^2(\Pi)$ be the Bergman space on $\Pi$
which consists of all analytic functions in $L^2(\Pi)$.
It is a RKHS with the reproducing kernel given by
\[
K(z,w)
\eqdef K_w(z)
= -\frac{1}{\pi(z-\overline{w})^2}\qquad (z,w\in \Pi).
\]
We identify $\Pi$ with $\bR\times \bR_+$.
In this example,
$G=\hG=\bR$, $Y=\bR_+$,
$\nu = \hnu$ is the Lebesgue measure on $\bR$,
$\la$ is the restriction of the Lebesgue measure of $\bR$ to $\bR_+$,
and $H=L_{\mathrm{hol}}^2(\Pi)$.
Furthermore, $(\rho(a)f)(z)=f(z-a)$,
and the elements of $\cC(\rho)$ are called \emph{vertical operators}.
By identifying $z$ with $(x,y)$ and $w$ with $(u,v)$,
we rewrite the reproducing kernel as
\[
K_{u,v}(x,y)
=-\frac{1}{\pi\bigl((x-u)+\iu\,(y+v)\bigr)^2}.
\]
The horizontal Fourier transform of $K_{0,v}(\cdot,y)$ is easy to compute using residues:
\begin{equation}
\label{eq:L_Bergman_vertical}
L_{\xi,v}(y)
=-\frac{1}{\pi}
\int_{\bR}
\frac{\enumber^{-2\pi\iu\xi u}\,\dif{}u}{\bigl(u+\iu (y+v)\bigr)^2}
=
\begin{cases}
4\pi\xi\enumber^{-2\pi (y+v)\xi}, & \xi > 0;
\\
0, & \xi \le 0.
\end{cases}
\end{equation}
Thus, in this example,
$\Om=\bR_+$ and
\[
q_\xi(y)=2\sqrt{\pi \xi}\,\enumber^{-2\pi y \xi}.
\]
Given $b$ in $L^\infty(\bR_+)$,
\begin{equation}
\label{eq:phi_vertical_Bergman}
\begin{aligned}
\phi_b(x,y,v)
&=
4\pi \int_{0}^{+\infty}
\xi \enumber^{-2\pi (y+v)\xi}\,
\enumber^{2\pi\iu x \xi}\, b(\xi)\, \dif\xi
\\[0.5ex]
&= 4\pi \int_{0}^{+\infty} \xi \enumber^{2\pi\iu\, (x+\iu y)\xi}\, \enumber^{-2\pi v\xi}\, b(\xi)\, \dif\xi.
\end{aligned}
\end{equation}
The operator $V_b=S_{\phi_b}$ takes the form
\[
(V_b f)(x,y)
=
4\pi\int_{\bR\times\bR_+} f(u,v)
\Bigl(\int_{0}^{+\infty} \xi \,b(\xi)\,
\enumber^{2\pi\iu\, ((x+\iu y)-(u-iv))\xi}\, \dif\xi\Bigr)\,
\dif u \dif v.
\]
This representation is similar to~\cite{BaisNaiduPinlodi2023Bergman}.
\end{example}

\begin{rem}[$\cA_0$ does not coincide with $\cA$]
\label{rem:A0_is_not_A}
In the context of Example~\ref{example:vertical_Bergman_UHP},
we will construct a function $\psi$ such that $\psi\in\cA_0\setminus\cA$.
Let $c(\xi) \eqdef \xi$ for every $\xi$ in $\bR_+$.
We define $\psi$ similarly to~\eqref{eq:phi_vertical_Bergman},
but, instead of $b$ in $L^\infty(\bR_+)$,
we take the unbounded function $c$:
\[
\psi(x,y,v)
\eqdef
4\pi \int_{\bR_+}
\xi c(\xi)
\enumber^{2\pi \iu\,(x+\iu y)\xi}
\enumber^{-2\pi v\xi}\,\dif{}\xi.
\]
The integral can be computed explicitly.
Moreover, for a fixed $v>0$,
the function $(x,y)\mapsto \psi(x,y,v)$
is analytic with respect to $z=x+\iu y$
and belongs to $L_{\mathrm{hol}}^2(\Pi)$:
\[
\psi(x,y,v)
=
4\pi
\int_{\bR_+}
\xi^2 \enumber^{-2\pi(y+v-\iu x)\xi}\,\dif{}\xi
=\frac{1}{\pi^2 (y+v-\iu x)^3}
=-\frac{\iu}{\pi^2 (z + \iu v)^3}.
\]
Furthermore, for a fixed $y>0$,
$(u,v)\mapsto\conj{\psi(-u,y,v)}$
is analytic with respect to $w=u+\iu v$
and belongs to $L_{\mathrm{hol}}^2(\Pi)$:
\[
\conj{\psi(-u,y,v)}
=\psi(u,v,y)
=-\frac{\iu}{\pi^2 (w+\iu y)^3}.
\]
Therefore, $\psi\in\cA_0$.
Let us prove that $\psi\notin\cA$.
Suppose that $b\in L^\infty(\bR_+)$ and $\psi=\phi_b$.
Then,
\[
4\pi \int_{\bR}
f_{y,v}(\xi)\,\enumber^{2\pi\iu x \xi}\,\dif{}\xi=0,
\]
where
\[
f_{y,v}(\xi)
\eqdef
\charfun_{\bR_+}(\xi)  (b(\xi)-\xi) \xi \enumber^{-2\pi (y+v)\xi}.
\]
It is easy to see that $f_{y,v}\in L^1(\bR)\cap L^2(\bR)$.
Thus, by the injective property of the Fourier transform
(or by the isometric property of the Fourier--Plancherel transform),
$f_{y,v}=0$ almost everywhere.
Therefore, $b(\xi)=\xi$ for almost every $\xi$ in $\bR_+$,
which contradicts to the assumption that $b\in L^\infty(\bR_+)$.
\end{rem}

\begin{example}[Vertical operators in the harmonic Bergman space]
\label{example:vertical_harmonic_Bergman_UHP}
This example is based on~\cite[Example~9.2]{HerreraMaximenkoRamos2022},
see also Loaiza and Lozano~\cite{LoaizaLozano2013}.
Let $G, Y, \nu, \lambda$ be as in Example~\ref{example:vertical_Bergman_UHP}.  The harmonic Bergman space $L_{\mathrm{harm}}^2(\Pi)$ consists of all harmonic functions in $L^2(\Pi)$.
It is well known that
$L_{\mathrm{harm}}^2(\Pi)
= L_{\mathrm{hol}}^2(\Pi) \bigoplus \conj{L_{\mathrm{hol}}^2(\Pi)}$.
It is an RKHS, and its reproducing kernel is
\[
K(z,w) = K_w(z)
= -\frac{1}{\pi(z-\overline{w})^2} -\frac{1}{\pi(\overline{z}-w)^2}.
\]
Similar to Example~\ref{example:vertical_Bergman_UHP},
after identifying $z$ with $(x,y)$ and $w$ with $(u,v)$,
\[
K_{u,v}(x,y)
=-\frac{1}{\pi\bigl((x-u)+\iu\,(y+v)\bigr)^2}
-\frac{1}{\pi\bigl((x-u)-\iu\,(y+v)\bigr)^2}.
\]
Therefore, similarly to~\eqref{eq:L_Bergman_vertical},
\begin{equation}
\label{eq:L_Bergman_harmonic_vertical}
L_{\xi,v}(y)
=-\frac{1}{\pi}
\int_{\bR}
\frac{\enumber^{-2\pi\iu\xi u}\,\dif{}u}{\bigl(u+\iu\,(y+v)\bigr)^2 + \bigl(u-\iu\,(y+v)\bigr)^2}
= 4\pi \vert \xi\vert\, \enumber^{-2\pi \vert \xi\vert (y+v)}.
\end{equation}
Here we have
$\Om=\bR$ and
\[
q_\xi(y)=2\sqrt{\pi \vert\xi\vert} \enumber^{-2\pi y \vert\xi\vert}.
\]
For $b$ in $L^\infty(\bR)$, we have 
\begin{equation}
\label{eq:phi_vertical_harmonic}
\phi_b(x,y,v)
= 4\pi\, \int_{\mathbb{R}} \vert\xi\vert\,
\enumber^{-2\pi y\vert\xi\vert-2\pi v\vert\xi\vert}\, \enumber^{2\pi\iu x \xi}\, b(\xi)\, \dif\xi.
\end{equation}
The operator $V_b=S_{\phi_b}$ takes the form 
\begin{equation}
\label{eq:V_b_harmonic_vertical}
(V_b f)(x,y) 
=  4\pi\, \int_{\mathbb{R}\times \mathbb{R}_+} f(u,v)\,
\Bigl(
\int_{\bR} \vert\xi\vert\,
\enumber^{2\pi\iu\, ((x+\iu y)-(u-iv))\,|\xi|}\,
b(\xi)\, \dif\xi
\Bigr)\,
\dif u \dif v.
\end{equation}
\end{example}

\begin{example}[Vertical operators in wavelet spaces]
This example is based on
Hutn\'{i}k and Hutn\'{i}kov\'{a}~\cite{HutnikHutnikova2011}
and~\cite[Example~9.5]{HerreraMaximenkoRamos2022}.
Let $\psi$ be a function belonging to $L^2(\bR)$
and satisfying the following admissibility condition:
\[
\int_{\bR_+} \vert (F\psi)(t\xi)\vert^2\, \frac{\dif t}{t} =1
\quad(\xi \in \bR\setminus\{0\}),\quad (F\psi)(0)=0.
\]
Let $G=\hG=\bR$ with Lebesgue measure
and $Y=\bR_+$ with measure $\dif\la(y) = \frac{\dif y}{y^2}$.
The set $G\times Y$ can be identified with the positive affine group.
For each $(x,y)$ in $\bR\times \bR_+$, we define
\[
\psi_{x,y}(t) \eqdef \frac{1}{\sqrt{y}}\, \psi\left(\frac{t-x}{y}\right).
\]
Let $W_\psi\colon L^2(\bR) \to L^2(\bR\times \bR_+, \nu\times\la)$
be the \emph{continuous wavelet transform} associated to the wavelet $\psi$:
\[
W_\psi f(x,y) \eqdef \langle f, \psi_{x,y}\rangle_{L^2(\bR)}.
\]
We consider the \emph{wavelet space} $H$
defined as the image of $L^2(\bR)$ under the map $W_\psi$.
It is a reproducing kernel Hilbert space with reproducing kernel given by
\[
K_{u,v}(x,y)
=
\langle \psi_{x,y},\psi_{u,v} \rangle_{L^2(\bR)}
= \langle \psi_{x-u,y},\psi_{0,v} \rangle_{L^2(\bR)}.
\]
As it was shown in \cite[Example~9.5]{HerreraMaximenkoRamos2022},
\begin{equation}
\label{eq:L_wavelet_vertical}
L_{\xi,v}(y) = \sqrt{yv}\, (F\psi)(y\xi)\, \overline{(F\psi)(v\xi)}\qquad (\xi\in \bR,\, y,v\in \bR_+).
\end{equation}
Therefore, $\Om = \bR$ and $q_\xi(y) = \sqrt{y}\, (F\psi)(y\xi)$.
For $b$ in $L^\infty(\bR)$, we have
\begin{equation}
\label{eq:integral_representation_in_wavelet_space}
\phi_b(x,y,v)
=  \sqrt{yv}\,
\int_{\bR} b(\xi)\,
(F\psi)(y\xi)\,\conj{(F\psi)(v\xi)}\,
\enumber^{2\pi\iu x\xi}\, \dif\xi.
\end{equation}
In this example,
the operator
$V_b=S_{\varphi_b}$ takes the form
\[
(V_b f)(x,y)
=
\int_{\bR\times \bR_+} f(u,v)\, \Bigl(\int_{\bR} \sqrt{yv}\, b(\xi)\, (F\psi)(y\xi)\, \conj{(F\psi)(v\xi)}\,
\enumber^{2\pi\iu (x-u)\xi}\, \dif\xi\Bigr)\, \frac{\dif u\, \dif v}{v^2}. 
\]
\end{example}

\begin{example}[Translation-invariant operators in the Steinwart--Hush--Scovel space]
\label{Example_Vertical_RBFK}
This example is based on~Steinwart, Hush, and Scovel~\cite{SteinwartHushScovel2006}
and~\cite[Example~9.11]{HerreraMaximenkoRamos2022}.
Let $\al>0$.
Consider the space $H$ of all holomorphic functions $f$ on $\bC^n$ such that
\[
\Vert f \Vert_H^2
\eqdef
\frac{2^n\al^{2n}}{\pi^n}
\int_{\bC^n} \vert f(z) \vert^2\,
\enumber^{-4\al^2 \sum_{j=1}^{n}\Im(z_j)^2}\,\dif\mu_{\bC^n}(z) < +\infty,
\]
where $\mu_{\bC^n}$ is the Lebesgue measure on $\bC\cong\bR^{2n}$.
As it is shown in~\cite{SteinwartHushScovel2006},
$H$ is closely related to the Gaussian kernel
which is widely used in machine learning.
It is an RKHS with reproducing kernel given by
\[
K(z,w)
= K_{w}(z)
\eqdef \prod_{j=1}^{n}
\enumber^{-\al^2(z_j-\conj{w}_j)^2}.
\]
We now identify $\bC^n$ with $\bR^n\times \bR^n$.
Let $G=Y=\bR^n$,
$E(x,\xi) = \enumber^{2\pi \iu\,\langle x,\xi\rangle}$,
$\nu = \hnu$ be the Lebesgue measure on $\bR^n$,
and the measure on $Y=\bR^n$ be given by 
\[
\dif\la(v)
\eqdef
\frac{2^n\al^{2n}}{\pi^n}\,  \enumber^{-4\al^2 \|v\|^2}\, \dif\nu(v)
=
\prod_{j=1}^{n}
\biggl(
\frac{2\al^{2}}{\pi}\, \enumber^{-4\al^2 v_j^2}\,
\dif v_j
\biggr).
\]
After identifying $\bC^n$ with $\bR^n\times\bR^n$,
we get
\[
K_{u,v}(x,y)
= \prod_{j=1}^{n} \enumber^{-\al^2(x_j-u_j)^2+\al^2(y_j+v_j)^2 -2\iu\al^2(x_j-u_j)(y_j+v_j)} .
\]
The function $L_{\xi,v}(y)$
is easy to compute by using the Gaussian integral:
\begin{align*}
L_{\xi,v}(y) 
&=
\prod_{j=1}^{n}
\left(\enumber^{\al^2(y_j+v_j)^2}\,
\int_{\bR}
\enumber^{-2\pi\iu x_j\xi_j}\,
\enumber^{-\al^2 x_j^2-2\iu\al^2 x_j(y_j+v_j)}\, \dif x_j
\right)
\\
&= \frac{\pi^{n/2}}{\al^n}
\prod_{j=1}^{n} \enumber^{-\frac{\pi^2\xi_j^2}{\al^2}-2\pi(y_j+v_j)\xi_j}
= \frac{\pi^{n/2}}{\al^n}
\enumber^{-\frac{\pi^2\|\xi\|^2}{\al^2}
-2\pi\langle y+v,\xi\rangle}.
\end{align*}
In this example, $\Om=\bR^n$.
For $b$ in $L^\infty(\bR^n)$, we have
\[
\phi_b(x,y,v) 
=
\frac{\pi^{n/2}}{\al^n}
\int_{\bR^n} b(\xi)\,
\enumber^{2\pi\iu\,\langle x,\xi\rangle
-2\pi \langle y+v,\xi\rangle
-\frac{\pi^2 \|\xi\|^2}{\al^2}}\,\dif\nu(\xi)
\]
If $z=x+\iu y\in\bC^n$ and $w=u+\iu v\in\bC^n$,
then
\begin{align}
\notag
\phi_b(x-u,y,v) 
&=
\frac{\pi^{n/2}}{\al^n}
\int_{\bR^n} b(\xi)\,
\enumber^{2\pi\iu\,(\langle x-u,\xi\rangle
+ \iu\,\langle y+v,\xi\rangle)
-\frac{\pi^2 \|\xi\|^2}{\al^2}}\,\dif\nu(\xi)
\\
\label{eq:phi_Steinwart}
&=
\frac{\pi^{n/2}}{\al^n}
\int_{\bR^n} b(\xi)\,
\enumber^{2\pi\iu\,\langle z-\conj{w},\xi\rangle
-\frac{\pi^2 \|\xi\|^2}{\al^2}}\,\dif\nu(\xi).
\end{align}
In the last expression, we use the canonical inner product in $\bC^n$, linear with respect to the first argument.
Therefore,
for every $f$ in $H$ and every $z=x+\iu y$ in $\bC^n$,
\begin{equation}
\label{eq:S_phi_Steinwart}
(V_b f)(z)
=
\frac{2^n\al^n}{\pi^{n/2}}\,
\int_{\bC^n} f(w)\,
\Bigl(\,
\int_{\bR^n} b(\xi)\,
\enumber^{2\pi\iu\,\langle z-\conj{w},\xi\rangle
-\frac{\pi^2 \|\xi\|^2}{\al^2}}\,\dif\nu(\xi)
\Bigr)\,
\enumber^{-4\al^2 \|\Im(w)\|^2}\,\dif\mu_{\bC^n}(w).
\end{equation}
\end{example}

\smallskip

\begin{rem}[Applying a change of variables with weight]
\label{rem:change_of_variables}
In the rest of the examples in this section,
we use some weighted changes of variables
to transform the domain,
the Hilbert space,
and the operator algebra.
Namely, we suppose that $\cH_1$ is an RKHS of functions on a certain domain $\cD$,
with reproducing kernel $(K_z^{\cH_1})_{z\in\cD_1}$,
$\tau$ is a unitary representation of $G$ on $\cH_1$,
and our object of study is the von Neumann algebra $\cC(\tau)$.
We construct an auxiliary set $Y$,
a measure $\la$ on $Y$,
functions $\phi\colon G\times Y\to\cD_1$
and $p\colon G\times Y\to\bC$,
and a linear isometry
$\tU\colon\cH_1\to\cL^2(G\times Y)$,
acting by
\begin{equation}
\label{eq:weighted_change_of_variables}
(\tU f)(u,v) = p(u,v) f(\phi(u,v))\qquad(u\in G,\ v\in Y,\ f\in\cH_1).
\end{equation}
Then, it is easy to verify
(see~\cite[Proposition 9.6]{HerreraMaximenkoRamos2022}
or similar facts in
Paulsen and Raghupathi~\cite[Sections~5.6 and 5.7]{PaulsenRaghupathi2016}
or Saitoh and Sawano~\cite[Theorem~2.9 and Corollary~2.5]{SaitohSawano2016})
that $H\eqdef\tU(\cH_1)$ is an RKHS on $G\times Y$ with reproducing kernel
\[
K_{x,y}^H(u,v)
= \conj{p(x,y)} K_{\phi(x,y)}^{\cH_1}(\phi(u,v)) p(u,v).
\]
Let $U\colon\cH_1\to H$ be the compression of $\tU$.
Then, $U$ is an isometric isomorphism of Hilbert spaces.
Suppose that $G$, $Y$, $H$, $K$ satisfy the assumptions of this paper, and $U$ intertwines the unitary representations $\tau$ and $\rho$:
\begin{equation}
\label{eq:U_intertwines_the_unitary_representations}
U \tau(a) = \rho(a) U\qquad(a\in G).
\end{equation}
In this situation,
we apply the main results of this paper to $H$ and $\cC(\rho)$,
and then we pass from $\cC(\rho)$ to $\cC(\tau)$ by
$\cC(\tau)=U^\ast \cC(\rho) U$.
We know that each element of $\cC(\rho)$
is of the form $V_b=S_{\phi_b}$
for some $b$ in $L^\infty(\Om)$.
Therefore,
each element of $\cC(\tau)$
is of the form
$A_b \eqdef U^\ast V_b U$
for some $b$ in $L^\infty(\Om)$.
\end{rem}

\begin{example}[Radial operators in the Bergman space on the unit disk]
\label{Example_radial_Bergman_disk}
This example is based on~\cite[Example~9.7]{HerreraMaximenkoRamos2022}.
Let $\bD$ be the open unit disk in $\bC$,
with the restricted Lebesgue measure $\mu_\bD$;
notice that $\mu_\bD(\bD)=\pi$.
The Bergman space $\cH_1=L_{\mathrm{hol}}^2(\bD)$
consists of all square-integrable
holomorphic functions.
It is an RKHS with reproducing kernel given by
\[
K^{\cH_1}_w(z)
= K^{\cH_1}(z,w) 
= \frac{1}{\pi(1-z\overline{w})^2}.
\]
We denote by $G$ the group $\bR/(2\pi\bZ)$
which can be identified with the unit circle $\bT$.
In this paper,
we prefer to identify $G$ with $[0,2\pi)$,
where the group operation is addition modulo $2\pi$
and the topology is taken from $\bT$.
Let $\nu$ be the normalized Haar measure on $G$,
i.e., the restricted Lebesgue measure on $[0,2\pi)$
divided by $2\pi$.
For every $\theta$ in $[0,2\pi)$,
we denote by $\tau(\theta)$ the corresponding rotation operator:
\[
(\tau(\theta)f)(z) \eqdef f(\enumber^{-\iu\theta}z)
\qquad(f\in L_{\mathrm{hol}}^2(\bD)).
\]
Then, $\tau$ is a unitary representation of $G$ in $L_{\mathrm{hol}}^2(\bD)$.
The elements of its centralizer $\cC(\tau)$ are known as \emph{radial operators}.

Let $\hG=\bZ$ with counting measure $\hnu$.
The duality pairing between $G$ and $\bZ$ is given by
$E(\theta+2\pi\bZ,k) = \enumber^{\iu k\theta}$.
Let $Y = [0,1)$ with measure $\dif\la(r) = r\,\dif{}r$.
We define $\varphi\colon G\times Y\to\bD$ by
$\varphi(\theta,r)\eqdef r\,\enumber^{\iu\theta}$
and $p\colon G\times Y \to \bC$ by $p(\theta,r) \eqdef \sqrt{2\pi}$.
Consider $\cH_2=\cL^2([0,2\pi)\times[0,1), \nu\times\la)$.
The map $\tU$ defined by~\eqref{eq:weighted_change_of_variables} is a linear isometry, and the space $H\eqdef\tU(\cH_1)$ is an RKHS with reproducing kernel
\[
K_{\eta,s}(\theta,r)
= \frac{2}{\bigl(1-r s\enumber^{\iu\,(\theta-\eta)}\bigr)^2}.
\]
It is easy to verify that~\eqref{eq:U_intertwines_the_unitary_representations} holds; that is, $U$ intertwines rotations with horizontal translations.
Since $K_{0,s}(\cdot,r)$ expands into the Fourier series
\[
K_{0,s}(\theta,r) = 2\sum_{k=0}^{\infty} (k+1) (rs)^k \enumber^{\iu k\theta},
\]
its $k$th Fourier coefficient is
\[
L_{k,s}(r) = 2(k+1)(rs)^k \charfun_{\bNz}(k).
\]
In this example, $\Om = \bNz$.
For every $b$ in $l^\infty(\bNz)$,
\begin{equation}
\label{eq:phi_radial_Bergman_disk}
\phi_b(\theta,r,s)
= \sum_{k=0}^{\infty} 2b_k (k+1) r^k s^k \enumber^{\iu k\theta}.
\end{equation}
Therefore,
$V_b=S_{\phi_b}$
takes the following form:
\begin{align*}
(V_b f)(\theta,r)
&=
\frac{1}{2\pi}
\int_{\left[0,2\pi\right)\times \left[0,1\right)}
f(\eta,s) \psi(\theta-\eta,r,s) s\,\dif s\,\dif\eta
\\
&=
\frac{1}{\pi}
\int_{\left[0,2\pi\right)\times \left[0,1\right)} f(\eta,s)
\Bigl(\;
\sum_{k=0}^{\infty}b_k (k+1) r^k s^k\enumber^{\iu k(\theta-\eta)}
\Bigr)\,
s\,\dif s\,\dif\eta
\\
&=
\frac{1}{\pi}
\int_{\left[0,2\pi\right)\times \left[0,1\right)} f(\eta,s)
\Bigl(\;
\sum_{k=0}^{\infty}b_k (k+1) (r\enumber^{\iu\theta})^k (s\enumber^{-\iu\eta})^k
\Bigr)\,
s\,\dif s\,\dif\eta.
\end{align*} 
The corresponding radial operator
$A_b\eqdef U^\ast V_b U$,
having the same eigenvalues' sequence $b$,
acts in $L_{\mathrm{hol}}^2(\bD)$ by
\begin{equation}
\label{eq:radial_operators_integral_form}
(A_b f)(z)
=
(U^\ast V_b U f)(z)
=
\frac{1}{\pi}
\int_{\mathbb{D}} f(w)
\Bigl(\;\sum_{k=0}^{\infty}b_k (k+1) (z\overline{w})^k \Bigr)\,
\dif\mu_\bD(w).
\end{equation}
This representation was obtained in~\cite{Mohan-Venku_2024}.
\end{example}

Similarly to Example~\ref{Example_radial_Bergman_disk},
one can also obtain an integral representation for the radial operators on the Fock space $F^2(\bC)$ and for the separately radial operators on the Fock space $F^2(\bC^n)$,
repeating the results described in~\cite{BaisNaidu2023Fock}.

\begin{example}[Radial operators in the harmonic Bergman space over the unit disk]
\label{Example_radial_harmonic_Bergman_disk}
This example is based on
\cite[Example~9.8]{HerreraMaximenkoRamos2022};
see also~\cite{LoaizaLozano2013}.
The harmonic Bergman space $\cH_1=L_{\mathrm{harm}}^2(\bD)$
consists of all harmonic functions belonging to $\cL^2(\bD)$.
It is an RKHS with reproducing kernel given by
\[
K^{\cH_1}_w(z) = K^{\cH_1}(z,w)
= \frac{1}{\pi(1-z\conj{w})^2} + \frac{1}{\pi(1-w\conj{z})^2} - 1.
\]
Analogously to Example~\ref{Example_radial_Bergman_disk},
we can study radial operators acting on
$L_{\mathrm{harm}}^2(\mathbb{D})$.
Proceeding in the same manner, we get
\[
L_{k,s}(r) = 2(\vert k\vert+1)(rs)^{\vert k\vert}
\qquad(k\in\bZ,\ 0\le r,s<1).
\]
Thus, here we have $\Om = \hG = \bZ$.
For each $b$ in $l^2(\bZ)$,
\begin{equation}
\label{Eqn_phi_radial_harmonic_Bergman_disk}
\phi_b(\theta,r,s)
= \sum_{k=-\infty}^{\infty} 2b_k(\vert k\vert+1)r^{\vert k\vert}s^{\vert k\vert} \enumber^{\iu k\theta}.
\end{equation}
The corresponding radial operator
$A_b \eqdef U^\ast V_b U$
acts in $L_{\mathrm{harm}}^2(\bD)$ by
\begin{equation}
\label{eq:Sphi_radial_harmonic_Bergman_disk}
(A_b f)(z)
= \frac{1}{\pi}
\int_{\bD} f(w)\, \Bigl(
b_0
+\sum_{k=1}^{\infty}2(k+1) \bigl(b_k (z\conj{w})^{k}
+b_{-k}(\conj{z}w)^{k}\bigr)
\Bigr)
\dif\mu_{\bD}(w). 
\end{equation}
\end{example}

\begin{example}[Angular operators in the Bergman space over the upper half-plane]
\label{Example_angular_Bergman_UHP}
This example is based on
\cite[Example~9.10]{HerreraMaximenkoRamos2022},
\cite[Sections~7.1 and 7.2]{Vasilevski2008book},
and~\cite{BaisNaidu2024Bergman}.
For every $a$ in $\bR$,
we define the dilation operator $\tau(a)$ on
$L_{\mathrm{hol}}^2(\Pi)$ by
\[
(\tau(a)f)(z) \eqdef \enumber^{-a} f(\enumber^{-a}z).
\]
$\tau$ is a unitary representation of $\bR$
on $L_{\mathrm{hol}}^2(\Pi)$.
The elements of $\cC(\tau)$ are called
\emph{angular operators} on $L_{\mathrm{hol}}^2(\Pi)$. 

Take $G=\bR$, $Y=(0,\pi)$,
$\nu=\hnu$ be the Lebesgue measure on $\bR$,
and $\la$ be the Lebesgue measure on $(0,\pi)$.
We apply the scheme from Remark~\ref{rem:change_of_variables},
with $\phi\colon\bR\times (0,\pi) \to \Pi$ defined by
$\phi(r,\theta) \eqdef \enumber^{r+\iu\theta}$
and $p\colon \bR\times (0,\pi) \to \bC$ defined by
$p(r,\theta) \eqdef \enumber^{r+\iu\theta}$.
The map
$\tU\colon L_{\mathrm{hol}}^2(\Pi) \to L^2(\bR\times (0,\pi))$
is a linear isometry.
Take $H=\tU(L_{\mathrm{hol}}^2(\Pi))$
and define $U\colon L_{\mathrm{hol}}^2(\Pi)\to H$
to be $\tU$ with restricted codomain.
Recall that $\rho(a)$ acts in $H$
by
$(\rho(a)h)(r,\theta)=h(r-a,\theta)$.
Then, $U$ intertwines $\tau$ with $\rho$.
Furthermore,
$H$ is an RKHS with reproducing kernel
\begin{equation*}
K_{s,\eta}(r,\theta) 
=  - \frac{\enumber^{r+s} \enumber^{\iu\,(\theta-\eta)}}%
{\pi(\enumber^{r+ \iu\theta}-\enumber^{s-\iu\eta})^2} 
= -\frac{1}{4\pi\sinh\bigl(\frac{r-s+\iu\,(\theta+\eta)}{2}\bigr)^2}.
\end{equation*}
Similarly to~\cite[Example~9.10]{HerreraMaximenkoRamos2022}, we get
$\Om=\bR$ and
\begin{equation}
\label{eq:L_Bergman_angular}
L_{\xi,\eta}(\theta)
= 
-\frac{1}{4\pi}
\int_{\bR}
\frac{\enumber^{-2\pi\iu r\xi}\,\dif{}r}{\sinh\bigl(\frac{r+\iu\,(\theta+\eta)}{2}\bigr)^2}
=
\frac{4\pi\xi \enumber^{-2\pi\xi(\theta+\eta)}}{1-\enumber^{-4\pi^2\xi}}.
\end{equation}
Given $b$ in $L^\infty(\bR)$,
\begin{equation}
\label{eq:phi_angular_Bergman}
\phi_b(r,\theta,\eta)
=
\int_{\bR}
b(\xi)
\frac{4\pi\xi 
\enumber^{2\pi\iu\xi r}
\enumber^{-2\pi\xi(\theta+\eta)}}{1-\enumber^{-4\pi^2\xi}}\,\dif{}\xi
=
\int_{\bR}
b(\xi)
\frac{4\pi\xi 
\enumber^{2\pi\iu\xi(r+\iu\theta+\iu\eta)}}{1-\enumber^{-4\pi^2\xi}}\,\dif{}\xi.
\end{equation}
Thereby, we get an explicit integral form of the operator $V_b=S_{\phi_b}$ acting in $H$:
\begin{equation}
\label{eq:S_phi_angular_Bergman}
(V_b f)(r,\theta)
= 
\int_{\bR\times(0,\pi)}
f(s,\eta)
\biggl(\int_{\bR} b(\xi) \frac{4\pi\xi\,\enumber^{2\pi\iu\xi(r+\iu\theta-s+\iu\eta)}}{1-\enumber^{-4\pi^2\xi}}\,\dif\xi \biggr)\,\dif{}s \dif\eta.
\end{equation}
Every element of $\cC(\tau)$
has form
$A_b \eqdef U^\ast V_b U$
for some $b$ in $L^\infty(\bR)$.
We are going to compute $A_b$ explicitly.
For $f$ in $L_{\mathrm{hol}}^2(\Pi)$
and $z=\enumber^{r+\iu\theta}$ in $\Pi$,
\begin{align*}
(A_b f)(z)
&=
\enumber^{-r-\iu\theta}
\int_{\bR\times(0,\pi)}
\enumber^{s+\iu\eta}
f(\enumber^{s+\iu\eta})
\biggl(\int_{\bR} b(\xi)
\frac{4\pi \xi\,\enumber^{2\pi\iu\xi(r+\iu\theta-s+\iu\eta)}}{1-\enumber^{-4\pi^2\xi}}\,\dif\xi \biggr)\,\dif{}s \dif\eta
\\[1ex]
&=
\enumber^{-2(r+\iu\theta)}
\int_{\bR\times(0,\pi)}
f(\enumber^{s+\iu\eta})
\biggl(\int_{\bR} b(\xi) 
\Bigl(\frac{\enumber^{r+\iu\theta}}{\enumber^{s-\iu\eta}}\Bigr)^{2\pi\iu\xi+1}
\frac{4\pi\xi}{1-\enumber^{-4\pi^2\xi}}\,\dif\xi \biggr)\,
\enumber^{2s}\,\dif{}s \dif\eta.
\end{align*}
On substituting $e^s = \alpha$, we get  
\begin{align*}
(A_b f)(z)
&=
\enumber^{-2(r+\iu\theta)}
\int_{\bR_+\times(0,\pi)}
f(\alpha\enumber^{\iu\eta})
\biggl(\int_{\bR} b(\xi) 
\Bigl(\frac{\enumber^{r+\iu\theta}}{\alpha\enumber^{-\iu\eta}}\Bigr)^{2\pi\iu\xi+1}
\frac{4\pi\xi}{1-\enumber^{-4\pi^2\xi}}\,\dif\xi \biggr)\,
\alpha\,\dif\alpha \dif\eta.
\end{align*}
After the change of variables
$w=\alpha\enumber^{\iu\eta}$
in the outer integral,
\begin{equation}
\label{eq:operator_angular_Bergman_complex_form}
(A_b f)(z)
=
\frac{1}{z^2}
\int_{\Pi}
f(w)
\biggl(\int_{\bR} b(\xi) 
\left(\frac{z}{\conj{w}}\right)^{2\pi\iu\xi+1}
\frac{4\pi\xi}{1-\enumber^{-4\pi^2\xi}}\,\dif\xi \biggr)\,\dif\mu_\bC(w).
\end{equation}
Finally, setting $\be=2\pi\xi$ in the inner integral, we obtain
\begin{equation}
\label{eq:operator_angular_Bergman_complex_form2}
(A_b f)(z)
=
\frac{1}{\pi z^2}
\int_{\Pi}
f(w)
\biggl(\int_{\bR}
b\left(\frac{\be}{2\pi}\right) 
\left(\frac{z}{\conj{w}}\right)^{\iu\be+1}
\frac{\be}{1-\enumber^{-2\pi\be}}\,\dif\be \biggr)\,\dif\mu_\bC(w).
\end{equation}
This is similar to the representation of angular operators given in \cite{BaisNaidu2024Bergman};
the unique difference is the dilation coefficient in $b$
which does not change the spectral properties of the operator.
\end{example}

\begin{example}[Vertical operators on the Fock space]
Let $\ga>0$.
The Bargmann--Segal--Fock space $F=F_\ga^2(\bC^n)$
consists of all holomorphic functions $f$ such that
\[
\Vert f\Vert_{F}
\eqdef
\frac{\ga^n}{\pi^n}
\int_{\bC^n}
|f(z)|^2\,\enumber^{-\ga\|z\|^2}\,\dif\mu_{\bC^n}(z)
< +\infty.
\]
It is an RKHS with reproducing kernel
$K^F(z,w) = K^F_w(z) = \enumber^{\ga \langle z,w\rangle}$.
For every $a$ in $\bR^n$,
we denote by $\tau(a)$ the ``horizontal''
Weyl translation operator, defined by
\[
(\tau(a) f)(z)
\eqdef
f(z-a) \enumber^{\ga \langle z,a\rangle - \frac{\ga \|a\|^2}{2}}.
\]
It is easy to see that $\tau$ is a unitary representation of $\bR^n$ in $F$.
For the elements of the centralizer $\cC(\tau)$,
we use the term ``vertical operators on $F$''.
Let $G$, $Y$, $\nu=\hnu$, $\la$, $\rho$, and $H$
be as in Example~\ref{Example_Vertical_RBFK},
with $\al=\sqrt{\ga/2}$.
Define $U\colon F \to H$ by
\[
(Uf)(z)\eqdef
\enumber^{-\al^2
\sum_{j=1}^n z_j^2}\,
f(z)
=
\enumber^{-\frac{\ga}{2}
\sum_{j=1}^n z_j^2}\,
f(z).
\]
It is easy to verify that $U\tau(a)=\rho(a)U$
for every $a$ in $\bR^n$.
In other words, $U$ intertwines horizontal Weyl translations
(acting on $F$)
with usual horizontal translations (acting on $H$).
Therefore, $\cC(\tau)=U^\ast \cC(\rho) U$.

Given $b$ in $L^\infty(\bR^n)$,
we denote by $V_b$ the operator~\eqref{eq:S_phi_Steinwart}
from Example~\ref{Example_Vertical_RBFK},
and set $A_b \eqdef U^\ast V_b U$.
Every element of $\cC(\tau)$ is of the form $A_b$
for some $b$ in $L^\infty(\bR^n)$.
Let us compute $A_b$ explicitly.
To simplify notation,
we assume $\ga=1$, $\al=\sqrt{1/2}$, and $n=1$.
Then,
\[
(V_b f)(z)
=
\frac{\sqrt{2}}{\sqrt{\pi}}\,
\int_{\bC} f(w)\,
\Bigl(\,
\int_{\bR} b(\xi)\,
\enumber^{2\pi\iu\,(z-\conj{w})\xi
-2\pi^2 \xi^2}\,\dif\nu(\xi)
\Bigr)\,
\enumber^{-2 \Im(w)^2}\,\dif\mu_{\bC}(w),
\]
and
\begin{align*}
(A_b f)(z)
&= (U^\ast V_b U f)(z)
\\
&=
\enumber^{\frac{z^2}{2}}
\frac{\sqrt{2}}{\sqrt{\pi}}\,
\int_{\bC} f(w)\,
\Bigl(\,
\int_{\bR} b(\xi)\,
\enumber^{2\pi\iu\,(z-\conj{w})\xi
-2\pi^2 \xi^2}\,\dif\nu(\xi)
\Bigr)\,
\enumber^{-2 \Im(w)^2}\,
\enumber^{-\frac{w^2}{2}}\,
\dif\mu_{\bC}(w)
\\[0.5ex]
&=
\frac{\sqrt{2}}{\sqrt{\pi}}\,
\int_{\bC} f(w)\,
\Bigl(\,
\int_{\bR} b(\xi)\,
\enumber^{-2
\left(\pi\xi - \frac{\iu\,(z-\conj{w})}{2}\right)^2}
\,\dif\nu(\xi)
\Bigr)\,
\enumber^{z\conj{w}}\,
\enumber^{-2 \Im(w)^2}\,
\enumber^{-\frac{w^2}{2} - \frac{\conj{w}^2}{2}}\,
\dif\mu_{\bC}(w).
\end{align*}
After a simplification,
\begin{equation}
\label{eq:T_b_Fock}
(A_b f)(z)
=
\frac{\sqrt{2}}{\pi\sqrt{\pi}}\,
\int_{\bC} f(w)\,
\Bigl(\,
\int_{\bR}
b\left(\frac{\be}{\pi}\right)\,
\enumber^{-2
\left(\be - \frac{\iu\,(z-\conj{w})}{2}\right)^2}
\,\dif\nu(\be)
\Bigr)\,
\enumber^{z\conj{w}}\,
\enumber^{-|w|^2}\,
\dif\mu_{\bC}(w).
\end{equation}
This is similar to the class of integral operators studied in~\cite{BaisNaidu2023Fock},
with the following small differences:
1) here we consider ``vertical'' operators instead of ``horizontal'' ones;
2) here we have $b(\beta/\pi)$ instead of $b$.
Using this representation, one can also obtain boundedness results studied in~\cite{Cao_SingInt_AdvMath_2020} for a class of singular integral operators on the Fock space $F^2(\bC^n)$
that settles the open question posed by Zhu in~\cite{Zhu_SingInt_IEOT_2015}.
\end{example}

\section*{Funding}

The first author thanks The Institute of Mathematical Sciences (IMSc), India for providing research facilities and financial support.
The second author has been supported by
SECIHTI (Mexico) project ``Ciencia de Frontera''
FORDECYT-PRONACES/61517/2020
and by IPN-SIP projects
(Instituto Polit\'{e}cnico Nacional, Mexico). The third author was supported by SERB-MATRICS, Government of India, under the grant MTR/2023/000750.

\bigskip\noindent
Shubham R. Bais\newline
The Institute of Mathematical Sciences \newline
A CI of Homi Bhabha National Institute \newline
CIT Campus, Taramani, Chennai \newline
India 600 113. \newline
email: shubhambais007@gmail.com, shubhambais@imsc.res.in \newline
https://orcid.org/0009-0000-1760-1564

\bigskip\noindent
Egor A. Maximenko\newline
Instituto Polit\'{e}cnico Nacional\newline
Escuela Superior de F\'{i}sica y Matem\'{a}ticas\newline
Ciudad de M\'{e}xico, 07738, Mexico\newline
email: egormaximenko@gmail.com, emaximenko@ipn.mx\newline
https://orcid.org/0000-0002-1497-4338

\bigskip\noindent
D. Venku Naidu\newline
Indian Institute of Technology Hyderabad\newline
Department of Mathematics,\newline
Kandi, Sangareddy,
Telangana \newline
India 502285\newline
email:
venku@math.iith.ac.in\newline
https://orcid.org/0000-0002-1279-0105


\begin{thebibliography}{30}

\bibitem{Aronszajn1950}
Aronszajn, N. (2023):
Theory of reproducing kernels.
Trans. Am. Math. Soc. 68,
337--404.
\doi{10.2307/1990404}.



\bibitem{BaisNaidu2023Fock}
Bais, S.R.;
Venku Naidu, D.
(2023):
$\mathcal{L}$-invariant and radial singular integral operators on the Fock space.
J. Pseudo-Differ. Oper. Appl. 14, 11.
\doi{10.1007/s11868-023-00506-w}.

\bibitem{BaisNaiduPinlodi2023Bergman}
Bais, S.R.;
Venku Naidu, D.,
Pinlodi, M.
(2023):
Integral representation
of vertical operators on the Bergman space over the upper half-plane. 
C. R. Math. Acad. Sci. Paris. 361, 1593--1604.
\doi{10.5802/crmath.477}.

\bibitem{BaisNaidu2023Toeplitz}
Bais, S.R.;
Venku Naidu, D.
(2023):
A note on $C^*$-algebra of Toeplitz operators
with $\mathcal{L}$-invariant symbols.
Complex Anal. Oper. Theory 17, 99.
\doi{10.1007/s11785-023-01400-5}.

\bibitem{BaisNaidu2024Bergman}
Bais, S.R.;
Venku Naidu, D.
(2024):
Integral representation
of angular operators on the Bergman space over the upper half-plane. 
New York J. Math. 30, 42--57.
\myurl{https://nyjm.albany.edu/j/2024/30-4.html}.

\bibitem{Bais_PAMS_2024}
Bais, S.R. (2025):
$C^*$-algebras of analytic functions. 
Proc. Amer. Math. Soc., Accepted for publication.
\doi{10.1090/proc/16961}.

\bibitem{Blackadar2006}
Blackadar, B.
(2006):
Operator Algebras.
Theory of C*-Algebras and von Neumann Algebras.
Springer, Berlin, Heidelberg.

\bibitem{Bourbaki1968}
Bourbaki, N. (1968):
Elements of Mathematics: Theory of Sets.
Hermann, Paris (original French edition),
Addison-Wesley, Reading, MA (translation).

\bibitem{Cao_SingInt_AdvMath_2020} 
Cao, G.F.; Li, J.; Shen, M.; Wick, B.D.; Yan, L. (2020):
A boundedness criterion for singular integral operators of convolution type on the Fock space. 
Adv. Math. 363, 107001.
\doi{10.1016/j.aim.2020.107001}.

\bibitem{Folland1989}
Folland, G.B. (1989):
Harmonic Analysis in Phase Space.
Princeton University Press,
Princeton, NJ.

\bibitem{Folland2016}
Folland, G.B. (2016):
A Course in Abstract Harmonic Analysis,
2nd ed.
CRC Press, Taylor \& Francis Group,
Boca Raton, FL.

\bibitem{GargThangavelu2024TwistedFock}
Garg, R.; Thangavelu, S. (2024): 
Boundedness of certain linear operators on twisted Fock spaces. 
Math. Z. 307 (2), 39.
\doi{10.1007/s00209-024-03500-0}.

\bibitem{HerreraMaximenkoRamos2022}
Herrera-Ya\~{n}ez, C.; Maximenko, E.A.; Ramos-Vazquez, G. (2022):
Translation-invariant operators in reproducing kernel Hilbert spaces.
Integr. Equ. Oper. Theory 94, 31.
\doi{10.1007/s00020-022-02705-4}. 

\bibitem{HutnikHutnikova2011}
Hutn\'{i}k, O., Hutn\'{i}kov\'{a}, M. (2011):
On Toeplitz localization operators.
Arch. Math. 97, 333--344.
\doi{10.1007/s00013-011-0307-5}.

\bibitem{Karapetyants_2020_convolution_disk}
Karapetyants, A.N.; 
Samko, S.G. (2020):
Hadamard-Bergman convolution operators. 
Complex Anal. Oper. Theory 14, 77.
\doi{10.1007/s11785-020-01035-w}.

\bibitem{Karapetyants_2022_convolution_disk}
Karapetyants, A.N.; 
Morales, E. (2022): 
A note on Hadamard-Bergman convolution operators.
Bol. Soc. Mat. Mex. (3) 28, 65.
\doi{10.1007/s40590-022-00457-0}.

\bibitem{Karapetyants_2024_convolution_UHP}
Karapetyants, A.N.; 
Vagarshakyan, A.A. (2024):
The Hadamard-Bergman convolution on the half-plane.
J. Fourier Anal. Appl. 30, 38.
\doi{10.1007/s00041-024-10097-9}.

\bibitem{Kreyszig1978}
Kreyszig, E. (1978):
Introductory Functional Analysis with Applications.
Wiley, New York, NY.

\bibitem{LoaizaLozano2013}
Loaiza, M.;
Lozano, C. (2013):
On C*-algebras of Toeplitz operators on the harmonic Bergman space.
Integr. Equ. Oper. Theory 76, 105--130.
\doi{10.1007/s00020-013-2046-4}.

\bibitem{Ma-Zhu_PAMS_2024}
Ma, P; Zhu, K. (2024): 
A $C^*$-algebra of entire functions. 
Proc. Amer. Math. Soc. 152, 2131--2144.
\doi{10.1090/proc/16711}.

\bibitem{Mohan-Venku_2024}
Mohan, P; Venku Naidu, D. (2024): 
Von Neumann algebras of analytic functions on the unit ball.
J. Math. Anal. Appl. 539, 128558.
\doi{10.1016/j.jmaa.2024.128558}.

\bibitem{Murphy1990}
Murphy, G.~J. (1990):
C*-Algebras and Operator Theory,
Academic Press, Boston, MA.

\bibitem{PaulsenRaghupathi2016}
Paulsen, V.I.;
Raghupathi, M. (2016):
An Introduction to the Theory of Reproducing Kernel Hilbert Spaces.
Cambridge University Press,
Cambridge.

\bibitem{SaitohSawano2016}
Saitoh, S.;
Sawano, Y. (2016):
Theory of Reproducing Kernels and Applications.
Springer, Singapore.

\bibitem{Sakai1971}
Sakai, S. (1971):
$C^{\ast}$-Algebras and $W^{\ast}$-Algebras.
Springer,
Berlin, Heidelberg, New York.

\bibitem{SteinwartHushScovel2006}
Steinwart, I.;
Hush, D.;
Scovel, C. (2006):
An explicit description of the reproducing kernel Hilbert spaces of Gaussian RBF kernels,
in IEEE Transactions on Information Theory
52:10,
4635--4643.
\doi{10.1109/TIT.2006.881713}.

\bibitem{Sunder1987}
Sunder, V.~S. (1987):
An invitation to von Neumann algebras,
Universitext, Springer, New York, NY.

\bibitem{Thangavelu2024TwistedHeisenberg}
Thangavelu, S. (2024):
Algebras of entire functions and representations of the twisted Heisenberg group.  
Indian J. Pure Appl. Math. 55 (3), 911--921.
\doi{10.1007/s13226-024-00636-x}.

\bibitem{Thangavelu2024FockSobolev}
Thangavelu, S. (2025):
Fourier multipliers and pseudo-differential operators on Fock-Sobolev spaces. 
Integr. Equ. Oper. Theory 97, 2.
\doi{10.1007/s00020-024-02789-0}.

\hyphenation{Berg-man}
\bibitem{Vasilevski2008book}
Vasilevski, N.L. (2008):
Commutative Algebras of Toeplitz Operators on the Bergman Space.
Birkh\"{a}user, Basel--Boston--Berlin.
\doi{10.1007/978-3-7643-8726-6}.

\bibitem{Zhu_SingInt_IEOT_2015} 
Zhu, K. (2015): 
Singular integral operators on the Fock space. 
Integr. Equ. Oper. Theory 81, 451-454. 
\doi{10.1007/s00020-015-2222-9}.

\end{thebibliography}
\end{document}